\documentclass{article}
\usepackage[final]{microtype}

\usepackage[utf8]{inputenc}
\usepackage{amsmath,amsfonts,amssymb,amscd,amsthm,xspace}
\usepackage{mathtools}
\usepackage{subfiles}
\usepackage{tikz-cd}
\usepackage{tikz}
\usetikzlibrary{calc}
\usetikzlibrary{matrix}
\usepackage{extarrows}
\usepackage{float}
\usepackage{bm}
\usepackage{pgf}
\usepackage{color}
\usepackage{subcaption}
\usepackage{import}

\usepackage{hyperref}

\usepackage[backend=bibtex,style=numeric,doi=false,isbn=false,url=false,date=year,maxbibnames= 10]{biblatex}
\AtEveryBibitem{%
  \clearfield{note}%
}
\renewbibmacro{in:}{}
\DeclareFieldFormat[article,book,inproceedings,thesis,incollection,inbook]{title}{#1} 

\bibliography{ref.bib}

\usepackage{cleveref}
\usepackage{bbm}

\newtheorem{theorem}{Theorem}[section]

\newtheorem{lemma}[theorem]{Lemma}
\newtheorem{proposition}[theorem]{Proposition}

\theoremstyle{definition}
\newtheorem{definition}[theorem]{Definition}
\theoremstyle{remark}
\newtheorem{remark}[theorem]{Remark}

\newcommand{\R}{\mathbb{R}}
\newcommand\dif{\mathop{}\!\mathrm{d}}

\newcommand{\spin}{\mathrm{Spin}}
\newcommand{\so}{\mathrm{SO}}
\renewcommand{\H}{\mathbb{H}}
\newcommand{\qi}{\mathbbm{i}}
\newcommand{\qj}{\mathbbm{j}}
\newcommand{\qk}{\mathbbm{k}}
\newcommand{\s}{\mathrm{s}}

\newcommand{\spann}{\mathop{\mathrm{span}}}%

\title{A discrete extrinsic and intrinsic Dirac operator}
\author{Tim, Hoffmann \and  Zi, Ye\footnote{This research was supported by the DFG-Collaborative Research Center, TRR 109, "Discretization in Geometry and Dynamics"}}
\begin{document}
\maketitle
\begin{abstract}
  In differential geometry of surfaces the Dirac operator appears intrinsically as a tool to address the immersion problem as well as in an extrinsic flavour (that comes with spin transformations to comformally transfrom immersions) and the two are naturally related.
  In this paper we consider a corresponding pair of discrete Dirac operators, the latter on discrete surfaces with polygonal faces and normals defined on each face, and show that many key properties of the smooth theory are preserved. In particular, the corresponding spin transformations, conformal invariants for them, and the relation between this operator and its intrinsic counterpart are discussed.

\end{abstract}
\section{Introduction}
The Dirac operator for Riemannian manifolds was originally constructed by Atiyah and Singer as an example for their index theorem (we will give the defintion of the Dirac operator in \cref{sec:relation}, for more details please refer to \cite{Atiyah_1968,lawson_spin_1990}). Since then a wide range of applications have been discovered in geometry, topology and physics. In particular people found it a viable way to deal with the immersion problem of manifolds, e.g., the immersion problem of surfaces in $\R^3$, $S^3$, and $\R^4$ (see \cite{friedrich_spinor_1998,roth_spinorial_2010}): Suppose $X$ is a surface with a metric, then a solution to the Dirac equation
\[D \phi = H \phi \]
of unit length corresponds to an  isometric immersion in $\R^3$ with mean curvature $H$.

On the other hand a Dirac-type operator
\begin{equation}
\label{eqn:ex_dirac}
D_f = -\frac{\dif f \wedge \dif }{\lvert \dif f \rvert^2}
\end{equation}
was developed to study conformal transformations of immersed surfaces in $\R^3$ \cite{kamberov_bonnet_1998}. Since it depends on a reference surface $f: X\rightarrow \R^3$, we call it the extrinsic Dirac operator. Any solutions to the equation
\[D_f \phi =\rho \phi\]
where $\rho$ is a real scalar, gives a new immersed surface by means of the spin transformation
\[\dif\tilde{f}=\overline{\phi}\cdot  \dif f  \cdot  \phi \]
with a mean curvature $(\rho+H)\lvert \phi \rvert^2$.
Given this, it is not surprising to see the following relation between the extrinsic and intrinsic Dirac operators:
\[D = D_f + H \]
Recently some beautiful numerical applications of $D_f$ have been created by Keenan, Pinkall and Schr\"oder  \cite{crane_robust_2013, crane_spin_2011}. Yet a solid mathematical discrete theory remains unknown.

In this paper we propose a discrete differential geometric framework for both the extrinsic and intrinsic Dirac operators. We will begin in  \cref{sec:extrinsic} with the extrinsic Dirac operator which is defined on a set of discrete surfaces called face-edge-constraint surfaces. The integrated mean curvature, which arises naturally in this setting by means of Steiner's formula, can be manipulated by the Dirac equation. One can use this idea to construct discrete minimal surfaces and their associated families, which turns out to be a generalization of the two types of minimal surfaces appearing in \cite{lam_discrete_2016}. Note that our discretization has induced some applications in computer graphics \cite{ye_2018}.\\
In the \cref{sec:intrinsic} we consider a more abstract intrinsic net, i.e., a cell complex with a length assigned to each edge. A discrete spinor bundle, together with a spinor connection, can then be constructed over this net. Furthermore, several results coming from the smooth theory can be shown to still hold in our setting: an even Euler characteristic implies the existence of a spin structure and the first Betti number determines the number of spin structures. The discrete intrinsic Dirac operator follows naturally and one can build a realization of the intrinsic net with prescribed integrated mean curvature in $\mathbb{R}^3$, which is a face-edge-constraint net, by solving the Dirac equation.\\

In the end we will see that just as in the smooth case, there is a nice connection between the extrinsic and intrinsic Dirac operators.

\section{Preliminaries:\\Quaterinionic interpretation of 3D rotations}
\label{sec:preliminaries}
We start by gathering some basic notions about quaternions and how they encode rotations in $\mathbb{R}^3$.
Let $\H$ denote the algebra of quaternions: the four dimensional real vector space $\H=\spann\{1,\qi,\qj,\qk\}$ together with the product relations $\qi^2=\qj^2=\qk^2=-1,\qi\qj=\qk,\qj\qk=\qi$, and $\qk\qi=\qj$. Then $\mathrm{Im}(\H):=\spann\{\qi,\qj,\qk\}$ is a three-dimensional subspace canonically isomorphic to $\R^3$ via 
\[(x,y,z)\mapsto x\qi+y\qj+z\qk.\] 
Given a vector $w\in\R^3$ the rotation of $w$ around a non-vanishing vector $u\in \R^3$ can be described in the following way: First let the vectors $w$ and $u$ be embedded in the imaginary quaternions in the above way. Then the rotation can be computed by:
\[\mathrm{R}^\theta_u (w)=q^{-1}\cdot w\cdot q\]
where $\mathrm{R}^\theta_u$ denotes the rotation of $w$ around $u$ through the angle $\theta$ and \[q=\lvert q\rvert \big(\cos \frac{\theta}{2}-\sin \frac{\theta}{2}\frac{u}{\lvert u\rvert}\big)\]Note that the angle $\theta$ is measured by the counterclockwise angle as one sees in the opposite direction of $u$.
\begin{lemma}
Let $w_1$, $w_2$ and $u$ be non-vanishing vectors in $\mathrm{Im}(\H)$ such that $\lvert w_1\rvert=\lvert w_2\vert$ and let $\theta \in (-\pi,\pi)$ denote the dihedral angle between two the planes $P_1= \spann\{w_1,u\}$ and $P_2=\spann\{w_2,u\}$. 
\begin{enumerate}
\item If $w_1-w_2\perp u$, then there is an uniquely defined unit quaternion $q$ such that
\begin{equation}
\label{eqn:length}
\mathrm{Im}(q)=\begin{cases}\frac{u}{\lvert  u\rvert}\lvert \mathrm{Im}(q)\rvert & \theta\neq 0 \\ 0 & \theta=0\end{cases}
\end{equation}
and 
\begin{equation}
\label{eqn:rotation}
q^{-1}\cdot w_1 \cdot q=w_2
\end{equation}
\item If $w_1+w_2\perp u$, then there is an unique real number $H$ such that 
\[(H+u)^{-1}\cdot w_1 \cdot (H+u)=-w_2\]
and we have
\[H=\lvert u\rvert \tan \frac{\theta}{2}.\]
\end{enumerate}
\end{lemma}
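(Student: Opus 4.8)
\emph{Proof plan.}
Both statements reduce to rewriting quaternionic conjugation as a rotation about the axis $u$, using the formula $\mathrm R^\theta_u(w)=q^{-1}wq$ recalled above, and then reading the rotation angle off the bending angle $\theta$. Throughout I will write $\hat u=u/\lvert u\rvert$ and split each $w_i=w_i^\parallel+w_i^\perp$ into its component along $\R u$ and its component in $u^\perp$. A rotation about $u$ fixes the $\parallel$-part and turns only the $\perp$-part, so everything comes down to planar rotations in $u^\perp$ of equal-length vectors, where the only datum is the angle.

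For part 1, I would first note that $w_1-w_2\perp u$ says exactly $w_1^\parallel=w_2^\parallel$, whence $\lvert w_1\rvert=\lvert w_2\rvert$ forces $\lvert w_1^\perp\rvert=\lvert w_2^\perp\rvert$; if $w_1^\perp=0$ then $w_1=w_2$, $\theta=0$, and $q=1$ does the job, so assume $w_1^\perp\neq 0$. By definition the bending angle $\theta$ is the oriented angle (relative to $\hat u$) carrying $w_1^\perp$ to $w_2^\perp$, i.e. $w_2^\perp=\mathrm R^\theta_u(w_1^\perp)$. Then $q=\cos\frac{\theta}{2}-\sin\frac{\theta}{2}\,\hat u$ acts by conjugation as $\mathrm R^\theta_u$, so $q^{-1}w_1q=w_1^\parallel+\mathrm R^\theta_u(w_1^\perp)=w_2$, which is \eqref{eqn:rotation}; its imaginary part $-\sin\frac{\theta}{2}\,\hat u$ lies on $\R\hat u$, and after possibly replacing $q$ by $-q$ it is a nonnegative multiple of $\hat u$, which is \eqref{eqn:length}. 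For uniqueness I would observe that any unit quaternion with imaginary part on $\R\hat u$ has the form $\pm(\cos\psi-\sin\psi\,\hat u)$ and acts as $\mathrm R^{2\psi}_u$; matching the action on $w_1^\perp\neq 0$ forces $2\psi\equiv\theta\pmod{2\pi}$, i.e. $\psi\equiv\theta/2\pmod\pi$, so the candidate is $\pm q$ and \eqref{eqn:length} singles one out (with $q=1$ understood in the degenerate case $\theta=0$).

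For part 2 the hypothesis $w_1+w_2\perp u$ gives $w_2^\parallel=-w_1^\parallel$, again with $\lvert w_1^\perp\rvert=\lvert w_2^\perp\rvert$; assume the planes nondegenerate, so $w_1^\perp\neq 0$. Writing $r=\lvert H+u\rvert$, one has $(H+u)/r=\cos\beta+\sin\beta\,\hat u$ with $\beta\in(0,\pi)$ and $\cot\beta=H/\lvert u\rvert$, and since real scalars do not affect conjugation, $(H+u)^{-1}(\cdot)(H+u)$ acts as $\mathrm R^{-2\beta}_u$. Hence $(H+u)^{-1}w_1(H+u)=-w_2$ holds iff $-2\beta$ equals, mod $2\pi$, the oriented angle from $w_1^\perp$ to $-w_2^\perp$; that angle is $\theta+\pi$, so $\beta=\frac{\pi-\theta}{2}$, the unique value in $(0,\pi)$ in its class (here $\theta\in(-\pi,\pi)$ is used). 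Since the map $H\mapsto\beta$ determined by $\cot\beta=H/\lvert u\rvert$ and $\beta\in(0,\pi)$ is a bijection $\R\to(0,\pi)$, this value of $\beta$ pins $H$ down uniquely, and $H=\lvert u\rvert\cot\beta=\lvert u\rvert\cot\frac{\pi-\theta}{2}=\lvert u\rvert\tan\frac{\theta}{2}$.

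The one genuinely delicate point is the bookkeeping of orientations: I need to fix the sign convention for $\theta$ relative to the orientation of $u$ so that $\theta$ is indeed the angle carrying $w_1^\perp$ to $w_2^\perp$, and then check that the extra half-turn in part 2 produces the stated sign $H=+\lvert u\rvert\tan(\theta/2)$ rather than its negative; the degenerate configurations ($\theta=0$, or some $w_i^\perp=0$) also need to be acknowledged explicitly. All quaternion algebra is absorbed into the rotation formula from the preliminaries, so no further computation with $\qi,\qj,\qk$ should be necessary.
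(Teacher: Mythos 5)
Your proposal is correct and takes essentially the same route as the paper: read quaternionic conjugation as the rotation about $u$, obtain part 1 from the rotation by $\theta$ (with the sign condition on $\mathrm{Im}(q)$ fixing the choice), and obtain part 2 by identifying $H+u$ with the rotation by $\theta+\pi$, yielding $H=\lvert u\rvert\tan\frac{\theta}{2}$. Your handling of uniqueness (matching rotation angles mod $2\pi$, and the bijection $H\mapsto\beta$) is a bit more explicit than the paper's, which instead rewrites $H+u$ trigonometrically into rotation-quaternion form, but the substance is the same.
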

\begin{proof}
  Let $w_1,w_2, u$ and $\theta$ be as above.
\begin{enumerate}
\item Since $w_1-w_2\perp u$, $w_2$ can be obtained by rotating $w_1$ around $u$ by the angle $\theta$. There are two quaternions $q=\pm (\cos \frac{\theta}{2} - \sin\frac{\theta}{2}\frac{u}{\lvert u\rvert})$ satisfying \cref{eqn:rotation}, but only one of them 
\[q=\begin{cases} -\cos\frac{\theta}{2}+\sin \frac{\theta}{2} \frac{u}{\lvert u\rvert} & \sin\frac{\theta}{2} \geq 0 \\ \cos\frac{\theta}{2}-\sin \frac{\theta}{2} \frac{u}{\lvert u\rvert} & \sin \frac{\theta}{2}<0 \end{cases}\]
satisfies \cref{eqn:length}.
\item Since $w_1+w_2\perp u$, $-w_2$ can be obtained by rotating $w_1$ around $u$ by the angle $\theta+\pi$.
\begin{align*}
H+u &=\lvert u\rvert \tan \frac{\theta}{2}+u \\
&=\lvert u \rvert \Big(\tan \frac{\theta}{2} +\frac{u}{\lvert u \rvert}\Big)\\
&=\frac{\lvert u\rvert}{\cos \frac{\theta}{2}} \Big( \sin \frac{\theta}{2}+\cos \frac{\theta}{2} \frac{u}{\lvert u\rvert}\Big)\\
&=\frac{\lvert u\rvert}{\cos \frac{\theta}{2}} \Big( -\cos(\frac{\pi}{2} +\frac{\theta}{2})+\sin (\frac{\pi}{2}+\frac{\theta}{2}) \frac{u}{\lvert u\rvert}\Big) \\
&=-\frac{\lvert u\rvert}{\cos \frac{\theta}{2}} \Big( \cos(\frac{\pi+\theta}{2})-\sin (\frac{\pi+\theta}{2}) \frac{u}{\lvert u\rvert}\Big) 
\end{align*}
It follows that $(H+u)^{-1}\cdot w_1\cdot (H+u)=-w_2$ and it is also the unique quaternion with the imaginary part being exactly $u$.
\end{enumerate}
\end{proof}
\section{The extrinsic Dirac operator}
\label{sec:extrinsic}
Given an immersed smooth surface $f: X \rightarrow \R^3\subset \H$ and a smooth quaternion-valued function $\phi:X \rightarrow \H$, a smooth scale-rotation of every tangent plane can be constructed by (see \cite{kamberov_bonnet_1998,kamberov_quaternions_2002})
\begin{equation}
  \label{eq:scale-rotation}
  \widetilde{(\dif f)} =\overline{\phi}\cdot \dif f \cdot \phi
\end{equation}
If there exists a further smooth surface $\tilde f$ such that $\dif (\tilde f)=\widetilde{(\dif f)}$, then it follows that
\[0=\dif \dif (\tilde f)= \dif \widetilde{(\dif f)}=\dif (\overline{\phi}\cdot f \cdot \phi)\]
which gives the equation:
\begin{align*}
    \mathrm{D}_f (\phi)=0
\end{align*}
where $\mathrm{D}_f=\frac{\dif f\wedge \dif}{\lvert \dif f \rvert^2}$ is called the Dirac operator with respect to the immersion $f$. Since $\mathrm{D}_f$ depends on the immersion $f$ (and in order to distinguish it from the intrinsic Dirac operator by Atiyah), we call it extrinsic Dirac operator in the following context.

We are now interested in a discretization of $\mathrm{D}_f$. Note that a point-wise inner product $\langle \cdot,\cdot\rangle$ on the $1$-forms induced by the metric can be defined by
\[ \langle \omega,\eta\rangle \dif vol :=\omega \wedge * \eta \; .\]
Then $\mathrm{D}_f$ can be formally reformulated as 
\begin{equation}
\label{eqn:dirac_hodge}
D_f(\phi)=-\frac{\dif f \wedge \dif \phi}{|\dif f |^2}=\langle \dif f,*\dif \phi\rangle\; .
\end{equation}
Hence, in the discrete setting it is more natural to think of $\phi$ as the function of the dual vertices.\\
A net is a cell complex $X=(V,E,F)$ such that
\begin{enumerate}
    \item The faces are all polygons, but not necessary planar.
    \item The intersection of two adjacent faces contains always only one edge.
\end{enumerate}
By oriented nets we mean in every face we choose a preferred direction for every edge such that the common edge in two adjacent faces has the reversed direction (\cref{fig:orientation}). An immersed net is a net with each vertex assigned with a position in $\R^3$. The notation $df_{ij}$ indicates the immersed edge incident to the faces $\Delta_i$ and $\Delta_j$ and with the orientation in face $\Delta_i$. It is clear that
\[df_{ij}=-df_{ji}\]

\begin{figure}[h]
\centering
\def\svgwidth{0.5\columnwidth}
\import{figs/}{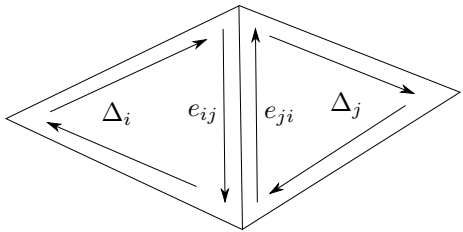}
\caption{Orientation}\label{fig:orientation}
\end{figure}
{ Our basic object is the face-edge-constraint net, which looks similar as the one in \cite{hoffmann_discrete_2016}. Instead of considering normals at the vertices, we consider the normals defined on the faces. The vertex-based normals lead to a generalization of  several existing discrete integrable surfaces such as discrete integrable minimal surfaces and CMC surfaces. However, it is difficult to obtain the notion of the discrete mean curvature and corresponding Dirac operator in that setting. We will see in the following that the face-based normals would fill this gap. A generalization that merges these two types of edge-constraint nets is one of our goals for future research.}

\begin{definition}
A face-edge-constraint net $\mathfrak{X}=(X,f,n)$ is an oriented net $X=(V,E,F)$ with an immersion $f:V\rightarrow \R^3$  and unit normals $n:F\rightarrow \mathbb{S}^2$ assigned to each face, such that
\begin{align}
\label{eqn:edgeconstraint}
n_i+n_j \perp df_{ij}
\end{align}
holds for every pair of adjacent faces $\Delta_i$ and $\Delta_j$, where $df_{ij}:= f(v_j) -f(v_i)$ is the discrete $1$-form.
\end{definition}
\begin{remark}
\label{rem:classical}
An immersed oriented net with all faces being planar and $n_i$ being the normal of the face $\Delta_i$ is always a face-edge-constraint net. We call such nets classical nets.
\end{remark}
An advantage of the face-edge-constraint nets is that they come with a natural notion of mean curvature that arises from a face offset Steiner's formula, as we will see below. We are then able to introduce a discrete spin transformation and Dirac operator such that the Dirac equation guarantees the closing condition of the spin transformation. Moreover, one can control the mean curvature with the Dirac equation exactly as in the smooth case.
\begin{definition}
\label{def:dihedral}
Given a face-edge-constraint net the dihedral angle $\theta_{ij}$ from the face $\Delta_i$ to $\Delta_j$ is defined to be the angle from the plane $P_i$ to $P_j$, where $P_i=\spann\{n_i,df_{ij}\}$ and $P_j=\spann\{n_j,df_{ij}\}$ (\cref{fig:dihedral}). 
\begin{figure}[h]
\centering
\def\svgwidth{0.5\columnwidth}
\begingroup%
  \makeatletter%
  \providecommand\color[2][]{%
    \errmessage{(Inkscape) Color is used for the text in Inkscape, but the package 'color.sty' is not loaded}%
    \renewcommand\color[2][]{}%
  }%
  \providecommand\transparent[1]{%
    \errmessage{(Inkscape) Transparency is used (non-zero) for the text in Inkscape, but the package 'transparent.sty' is not loaded}%
    \renewcommand\transparent[1]{}%
  }%
  \providecommand\rotatebox[2]{#2}%
  \newcommand*\fsize{\dimexpr\f@size pt\relax}%
  \newcommand*\lineheight[1]{\fontsize{\fsize}{#1\fsize}\selectfont}%
  \ifx\svgwidth\undefined%
    \setlength{\unitlength}{196.35965016bp}%
    \ifx\svgscale\undefined%
      \relax%
    \else%
      \setlength{\unitlength}{\unitlength * \real{\svgscale}}%
    \fi%
  \else%
    \setlength{\unitlength}{\svgwidth}%
  \fi%
  \global\let\svgwidth\undefined%
  \global\let\svgscale\undefined%
  \makeatother%
  \begin{picture}(1,0.70008507)%
    \lineheight{1}%
    \setlength\tabcolsep{0pt}%
    \put(0,0){\includegraphics[width=\unitlength,page=1]{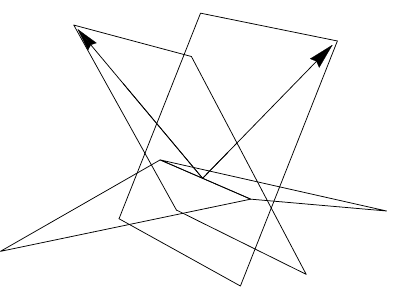}}%
    \put(0.20234816,0.16092933){\color[rgb]{0,0,0}\makebox(0,0)[lt]{\lineheight{1.25}\smash{\begin{tabular}[t]{l}$\Delta_i$\end{tabular}}}}%
    \put(0.74673287,0.26293462){\color[rgb]{0,0,0}\makebox(0,0)[lt]{\lineheight{1.25}\smash{\begin{tabular}[t]{l}$\Delta_j$\end{tabular}}}}%
    \put(0.34294118,0.48359712){\color[rgb]{0,0,0}\makebox(0,0)[lt]{\lineheight{1.25}\smash{\begin{tabular}[t]{l}$P_i$\end{tabular}}}}%
    \put(0.57249345,0.528046){\color[rgb]{0,0,0}\makebox(0,0)[lt]{\lineheight{1.25}\smash{\begin{tabular}[t]{l}$P_j$\end{tabular}}}}%
    \put(0.13867278,0.66139251){\color[rgb]{0,0,0}\makebox(0,0)[lt]{\lineheight{1.25}\smash{\begin{tabular}[t]{l}$n_i$\end{tabular}}}}%
    \put(0.80007151,0.63630491){\color[rgb]{0,0,0}\makebox(0,0)[lt]{\lineheight{1.25}\smash{\begin{tabular}[t]{l}$n_j$\end{tabular}}}}%
  \end{picture}%
\endgroup%

\caption{Dihedral angle}\label{fig:dihedral}
\end{figure}

\end{definition}
\begin{definition}
For a face-edge-constraint net the integrated mean curvature of the edge $e_{ij}$ is defined by 
\begin{equation}
\label{def:int_mc}
\mathbf{H}_{ij}=\frac{1}{2}\lvert df_{ij}\rvert \tan  \frac{\theta_{ij}}{2}\;.
\end{equation}
The mean curvature of a face is defined to be the sum of the mean curvatures of all the edges around the face:
\[\mathbf{H}_i=\sum\limits_j \mathbf{H}_{ij}\]
where $j$ runs through all the adjacent faces of $\Delta_i$.
\end{definition}
\begin{remark}
Suppose $X$ is a smooth immersed surface and $X_t$ is the surface offset obtained by shifting every point of $X$ along the normals with distance $t$. Then, Steiner's formula for the infinitesimal area $\dif A$ of $X_t$ gives
\begin{equation}
\dif A(X_t)=(1+2Ht+Kt^2) \dif A(X)
\end{equation}
where $H$ and $K$ stand for the mean curvature and Gauss curvature of $X$ respectively. In order to be consistent with the terminology in \cite{crane_spin_2011,kamberov_bonnet_1998}, we choose the sign of $H$ which is different from the one in \cite{karpenkov_offsets_2014}.\\
Now let us consider a classical face-edge-constraint net $\mathfrak{X}$. If we move the plane of the face $\Delta_i$ along $n_i$, as well as all the  faces $\Delta_j$ adjacent to $\Delta_i$ along $n_j$, with the distance $t$, then we obtain the face offset $\Delta_i^t$. The area of $\Delta_i^t$ is 
\begin{equation}
\mathrm{Area}(\Delta_i^t)=\Big(1+\frac{2\mathbf{H}_i}{\mathrm{Area}(\Delta_i)}t+\mathrm{o}(t^2)\Big)\mathrm{Area}(\Delta_i)
\end{equation}
hence our mean curvature can be thought of as the the mean curvature integrated over the face $\Delta_i$.
\end{remark}
\begin{proof}
See \cite[Thm 2.4]{karpenkov_offsets_2014}.
\end{proof}
The next definition ties together all the edge-located information in one quaternionic object:
\begin{definition}
The hyperedge $E_{ij}\in \H$ is a quaternion whose real part is the mean curvature of the edge $e_{ij}$ and whose imaginary part is the natural embedding of the edge into $\H$, i.e., 
\[E_{ij}:= \mathbf{2H}_{ij}+df_{ij}\]
\end{definition}
It is easy to see the following two properties of hyperedges:
\noindent
\begin{proposition}
For any hyperedge one finds:
\begin{enumerate}

    \item $E_{ij}=\overline{E_{ji}}$
    \item If the dihedral angle $\theta_{ij}=0$, then $E_{ij}=df_{ij}$ is purely imaginary.
\end{enumerate}
\end{proposition}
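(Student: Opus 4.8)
The plan is to obtain both statements by unwinding the definitions of $E_{ij}$, $\mathbf{H}_{ij}$ and $\theta_{ij}$; the only point that is not a one-line computation is the symmetry of the integrated mean curvature under the reversal $i\leftrightarrow j$, and for that I would reuse the preliminary Lemma rather than fuss over orientation conventions.

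For (1), observe first that $\mathbf{H}_{ji}\in\R$ and $e_{ji}\in\mathrm{Im}(\H)$, so $\overline{E_{ji}}=\mathbf{H}_{ji}-e_{ji}=\mathbf{H}_{ji}+e_{ij}$ using $e_{ij}=-e_{ji}$; thus (1) is equivalent to $\mathbf{H}_{ij}=\mathbf{H}_{ji}$, and since $\lvert e_{ij}\rvert=\lvert e_{ji}\rvert$ also to $\theta_{ij}=\theta_{ji}$. To nail down $\overline{E_{ji}}=E_{ij}$ I would argue as follows. The edge-constraint $n_i+n_j\perp e_{ij}$ lets us apply part~(2) of the Lemma of \cref{sec:preliminaries} to the triple $(n_i,n_j,e_{ij})$: it says that $\mathbf{H}_{ij}+e_{ij}$ is the unique quaternion whose imaginary part is $e_{ij}$ and which satisfies $(\mathbf{H}_{ij}+e_{ij})^{-1}\,n_i\,(\mathbf{H}_{ij}+e_{ij})=-n_j$; here one notes that the Lemma's ``$H$'' equals $\mathbf{H}_{ij}=\lvert e_{ij}\rvert\tan(\theta_{ij}/2)$ because the Lemma's angle is exactly the bending angle between $\spann\{n_i,e_{ij}\}$ and $\spann\{n_j,e_{ij}\}$. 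Running the same Lemma on $(n_j,n_i,e_{ji})$ characterises $E_{ji}$ as the unique quaternion with imaginary part $e_{ji}$ satisfying $E_{ji}^{-1}\,n_j\,E_{ji}=-n_i$. Conjugating this identity — using $\overline{ab}=\bar b\,\bar a$, $\overline{n_i}=-n_i$, $\overline{n_j}=-n_j$, $\overline{a^{-1}}=(\bar a)^{-1}$ — yields $(\overline{E_{ji}})^{-1}\,n_i\,\overline{E_{ji}}=-n_j$, while $\mathrm{Im}(\overline{E_{ji}})=-e_{ji}=e_{ij}$; the uniqueness clause then forces $\overline{E_{ji}}=E_{ij}$, which is (1) and also delivers $\mathbf{H}_{ij}=\mathbf{H}_{ji}$ and $\theta_{ij}=\theta_{ji}$ en route.

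A more hands-on alternative for $\theta_{ij}=\theta_{ji}$ avoids the Lemma entirely: the planes $\spann\{n_i,e_{ij}\}$ and $\spann\{n_i,e_{ji}\}$ coincide (and likewise for $j$), and passing from $\theta_{ij}$ to $\theta_{ji}$ reverses both the source-to-target sense of the rotation and the axis $e_{ij}\mapsto-e_{ij}$ along which the angle is measured counterclockwise, so the two sign changes cancel. Either route works; I would keep the Lemma-based one because it reuses machinery already set up. For (2), if $\theta_{ij}=0$ then $\tan(\theta_{ij}/2)=0$, hence $\mathbf{H}_{ij}=\lvert e_{ij}\rvert\tan(\theta_{ij}/2)=0$ and $E_{ij}=\mathbf{H}_{ij}+e_{ij}=e_{ij}\in\mathrm{Im}(\H)$ under $\R^3\cong\mathrm{Im}(\H)$, i.e.\ $E_{ij}$ is purely imaginary.

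The single place demanding a bit of care is the appeal to uniqueness in the preliminary Lemma: it pins down the whole quaternion, real part included, by prescribing its imaginary part to be the embedded edge — not merely its conjugation class — but this is exactly how that Lemma is phrased, so nothing extra is needed; the rest is routine.
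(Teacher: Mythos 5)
Your proposal is correct. Note, however, that the paper offers no proof at all here (the statement is prefaced by ``It is easy to see''), and the intended easy argument is precisely your ``hands-on alternative'': since $\lvert e_{ij}\rvert=\lvert e_{ji}\rvert$ and the bending angle is unchanged when one simultaneously reverses the order of the two faces and the direction of the common edge (the two sign changes cancel), one has $\mathbf{H}_{ij}=\mathbf{H}_{ji}$, whence $\overline{E_{ji}}=\mathbf{H}_{ji}-e_{ji}=\mathbf{H}_{ij}+e_{ij}=E_{ij}$; part (2) is immediate from $\tan 0=0$. Your preferred route through part (2) of the preliminary Lemma is also valid and has a pleasant side effect: in invoking the uniqueness clause you in effect re-derive the identity $E_{ij}^{-1}\cdot n_i\cdot E_{ij}=-n_j$, which the paper only establishes afterwards, by direct computation, in \cref{pro:edgeconstraint2-dummy}\let\crefdummy\relax \cref{eqn:edgeconstraint2}. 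The trade-off is that your argument is logically heavier (it leans on the edge constraint and on a uniqueness statement whose proof in the Lemma is itself only sketched), whereas the direct computation needs nothing beyond the definitions; on the other hand, your route sidesteps any worry about the sign convention for the bending angle, since the symmetry $\theta_{ij}=\theta_{ji}$ falls out of the uniqueness rather than having to be argued from the orientation conventions. One small editorial caution: the Lemma's uniqueness is over real numbers $H$ with the imaginary part fixed to be exactly $u$, which is exactly what you use, so no gap there --- just make sure to state, as you do, that $\overline{E_{ji}}$ has real real-part and imaginary part exactly $e_{ij}$ before appealing to it.
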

One can read hyperedges as rotation quaternions. This way we obtain
\begin{proposition}
\label{eqn:edgeconstraint2}
\[E_{ij}^{-1}\cdot n_i\cdot E_{ij}=-n_j\]
\end{proposition}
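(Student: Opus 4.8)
The plan is to recognize this statement as an immediate consequence of part~(2) of the Lemma in \cref{sec:preliminaries}. First I would make the identification $w_1 = n_i$, $w_2 = n_j$, and $u = e_{ij}$. The hypothesis $\lvert w_1\rvert = \lvert w_2\rvert$ holds because $n_i$ and $n_j$ are unit normals, and the hypothesis $w_1 + w_2 \perp u$ is precisely the edge-constraint condition \cref{eqn:edgeconstraint}. Thus the Lemma applies.

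Next I would check that the angle $\theta$ occurring in the Lemma — the bending angle between $P_1 = \spann\{w_1,u\}$ and $P_2 = \spann\{w_2,u\}$ — is exactly the bending angle $\theta_{ij}$ from the definition, which by construction is the angle from $P_i = \spann\{n_i, e_{ij}\}$ to $P_j = \spann\{n_j, e_{ij}\}$. Here one must confirm that the two orientation conventions coincide (both measure the angle counterclockwise as seen looking against $u = e_{ij}$), and implicitly that $\theta_{ij} \in (-\pi,\pi)$ with $\theta_{ij}\neq \pm\pi$, which is already tacitly assumed wherever $\mathbf{H}_{ij}$ is defined.

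With these identifications in place, the Lemma produces the unique real number $H = \lvert u\rvert \tan\frac{\theta}{2} = \lvert e_{ij}\rvert \tan\frac{\theta_{ij}}{2} = \mathbf{H}_{ij}$ with $(H+u)^{-1}\cdot w_1 \cdot (H+u) = -w_2$. Since $E_{ij} = \mathbf{H}_{ij} + e_{ij} = H + u$, this reads $E_{ij}^{-1}\cdot n_i \cdot E_{ij} = -n_j$, which is the claim; uniqueness of $H$ is a bonus that will be convenient later.

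The only genuine subtlety — the ``hard part'', such as it is — is the bookkeeping that identifies the Lemma's $\theta$ with $\theta_{ij}$ \emph{including the sign}, so that the real number furnished abstractly by the Lemma is literally $\mathbf{H}_{ij} = \lvert e_{ij}\rvert \tan\frac{\theta_{ij}}{2}$ and not its negative. Everything else is a direct substitution, so I would keep the argument short and defer the detailed angle convention to the figure accompanying the definition of the bending angle.
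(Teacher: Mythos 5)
Your proposal is correct and follows essentially the same route as the paper: the paper's proof simply re-derives inline the computation from part (2) of the preliminary Lemma (rewriting $E_{ij}$ as a rotation by $\theta_{ij}+\pi$ about $e_{ij}$), whereas you cite that Lemma directly with $w_1=n_i$, $w_2=n_j$, $u=e_{ij}$, which is exactly what it was set up for. Your attention to the sign/orientation convention identifying the Lemma's $\theta$ with $\theta_{ij}$ is the right bookkeeping and is consistent with the paper's conventions.
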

\begin{proof}
Direct computation yields
\begin{align*}
E_{ij} &= \tan \frac{\theta_{ij}}{2} \lvert df_{ij}\rvert +df_{ij} \\
&=\lvert df_{ij}\rvert \cos\tfrac{\theta_{ij}}{2}\left( \sin \tfrac{\theta_{ij}}{2}+ \cos \tfrac{\theta_{ij}}{2}\frac{df_{ij}}{\lvert df_{ij}\rvert}\right) \\
&=\lvert df_{ij}\rvert \cos\tfrac{\theta_{ij}}{2}\left( -\cos \tfrac{\theta_{ij}+\pi}{2}+ \sin \tfrac{\theta_{ij}+\pi}{2}\frac{df_{ij}}{\lvert df_{ij}\rvert}\right)\; .
\end{align*}
Apparently $n_i$ gets mapped to $-n_j$ by the rotation around the axis $\frac{df_{ij}}{\lvert df_{ij}\rvert}$ with the angle $\theta_{ij}+\pi$. 
\end{proof}
\begin{definition}  
Let $\mathcal{H}$ be the space of functions from the set of faces $F$ to $\H$. We also refer to the elements in $\mathcal{H}$ as the spinors. 
The discrete extrinsic Dirac operator, also denoted by $D_f$, is defined as follows:
\[D_f:\mathcal{H}\rightarrow \mathcal{H}\]
\[D_f(\phi)|_i=\sum\limits_j E_{ij}\cdot (\phi_j-\phi_i),\quad\text{for all faces $i\in F$},\]
where $j$ runs through all the neighboring faces of $i$.
\end{definition}

$D_f$ has a similar form as its smooth counterpart \cref{eqn:dirac_hodge}. Since the sum of the imaginary parts of hyperedges around a face vanishes, the Dirac operator can be rewritten as
\begin{align*}
    D_f(\phi)|_i &= \frac{1}{2}\sum\limits_{j} E_{ij}\cdot \phi_j- \frac{1}{2}\big(\sum\limits_{j}E_{ij}\big)\cdot \phi_i \\
    &=\frac{1}{2}\sum\limits_{j} E_{ij}\cdot \phi_j- \big(\sum\limits_{j}\mathbf{H}_{ij}\big)\phi_i \\
    &=\frac{1}{2}\sum\limits_{j} E_{ij}\cdot \phi_j- \mathbf{H}_i\phi_i
\end{align*}
\begin{proposition}
Let $\langle \cdot,\cdot\rangle$ be the scalar product defined on $\mathcal{H}$
\[\langle \phi,\psi\rangle =\sum_i\overline{\phi_i}\psi_i\]
where $i$ runs through all the faces of $X$ and suppose $X$ is a closed net. Then, the discrete extrinsic Dirac operator $D_f$ is self-adjoint.
\end{proposition}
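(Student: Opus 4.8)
The plan is to work with the rewritten form of the operator, $D_f(\phi)|_i=\sum_j E_{ij}\,\phi_j-\mathbf{H}_i\,\phi_i$, rather than with the original difference form. This identity is available precisely because $X$ is closed: then every edge of every face $\Delta_i$ borders a second face, so the imaginary parts of the hyperedges entering the sum at $\Delta_i$ are exactly the consistently oriented edges of the polygon $\Delta_i$, and these add up to zero; hence $\sum_j E_{ij}=\sum_j\mathbf{H}_{ij}=\mathbf{H}_i$, as already noted in the excerpt.

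Next I would expand $\langle D_f\phi,\varphi\rangle=\sum_i\overline{(D_f\phi)_i}\,\varphi_i$ with this form, keeping in mind that quaternionic conjugation is an anti-automorphism ($\overline{pq}=\overline{q}\,\overline{p}$) and that $\mathbf{H}_i\in\R$. This turns $\langle D_f\phi,\varphi\rangle$ into a double sum $\sum_{i}\sum_{j}\overline{\phi_j}\,\overline{E_{ij}}\,\varphi_i$ together with the diagonal term $-\sum_i\mathbf{H}_i\,\overline{\phi_i}\,\varphi_i$. Now apply the earlier proposition $\overline{E_{ij}}=E_{ji}$ and relabel the summation by interchanging the two face indices: since $X$ is closed, the index set of ordered pairs of adjacent faces is invariant under this swap, so the double sum becomes $\sum_{i}\sum_{j}\overline{\phi_i}\,E_{ij}\,\varphi_j$. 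On the other hand, expanding $\langle\phi,D_f\varphi\rangle=\sum_i\overline{\phi_i}\,(D_f\varphi)_i$ directly with the same rewritten form yields $\sum_{i}\sum_{j}\overline{\phi_i}\,E_{ij}\,\varphi_j-\sum_i\mathbf{H}_i\,\overline{\phi_i}\,\varphi_i$, which is literally the expression just obtained. Hence $\langle D_f\phi,\varphi\rangle=\langle\phi,D_f\varphi\rangle$ for all $\phi,\varphi\in\mathcal{H}$.

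There is no deep obstacle here — the argument is essentially bookkeeping — but two points need care. First, the pairing is $\H$-valued and conjugation reverses products, so every left/right multiplication must be tracked in order; "self-adjoint" has to be read with the quaternionic conjugate-linear structure in mind (note $\langle\varphi,\phi\rangle=\overline{\langle\phi,\varphi\rangle}$). Second, closedness of $X$ is genuinely used twice: to validate the rewritten form of $D_f$ (so that $\sum_j E_{ij}=\mathbf{H}_i$), and to make the index swap $(i,j)\leftrightarrow(j,i)$ a bijection of the summation range — on a net with boundary this reindexing step would leave over boundary contributions and the statement would fail. I expect the only mildly delicate moment to be confirming that the conjugation and the relabelling combine to exactly recover $\sum_{i,j}\overline{\phi_i}\,E_{ij}\,\varphi_j$ with the correct order of factors.
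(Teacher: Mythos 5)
Your proposal is correct and follows essentially the same route as the paper: expand $\langle D_f\phi,\varphi\rangle$ using the rewritten form $D_f(\phi)|_i=\sum_j E_{ij}\phi_j-\mathbf{H}_i\phi_i$, apply the anti-automorphism property of quaternionic conjugation together with $\overline{E_{ij}}=E_{ji}$, and swap the two face indices, which is a bijection of the summation range precisely because $X$ is closed. Your explicit remarks about where closedness enters and about tracking the order of quaternionic factors are consistent with, and slightly more careful than, the paper's own computation.
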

\begin{proof}
Let $j_i$ be the indices of the faces neighbouring to $i$.
\begin{align*}
    \langle D_f \phi, \psi \rangle &= \sum\limits_i \overline{D_f \phi _i} \psi_i  \\
    &= \sum\limits_i \overline{\sum\limits_{j_i} \frac{1}{2}E_{ij}\cdot \phi_{j_i} -\mathbf{H}_i \phi _i}\psi_i\\
    &=\sum\limits_i \sum\limits_{j_i}\left( \overline{\phi_{j_i}}\overline{\frac{1}{2}E_{ij_i}}\psi_i-\mathbf{H}_i \overline{\phi_i}\psi_i \right)
\end{align*}
If $X$ is closed then we can switch the indices in the first term and it yields
\begin{align*}
    \langle D_f \phi, \psi \rangle
    &=\sum\limits_i \sum\limits_{j_i}\left(\frac{1}{2} \overline{\phi_{i}}\overline{E_{j_i i}}\psi_{j_i}-\mathbf{H}_i \overline{\phi_i}\psi_i \right)\\
    &=\sum\limits_i \sum\limits_{j_i}\left(\frac{1}{2} \overline{\phi_{i}}E_{ij_i}\psi_{j_i}-\mathbf{H}_i \overline{\phi_i}\psi_i \right)\\
    &= \sum\limits_i \overline{\phi_i}  \sum\limits_{j_i}\left( \frac{1}{2}E_{ij_i} \psi_{j_i} -\mathbf{H}_i \psi_i\right) \\
    &=\sum\limits_i \overline{\phi}_i D_f\psi_i \\
    &=\langle \phi,D_f\psi \rangle
\end{align*}
\end{proof}
We will now define a scale-rotation type of transformation for face-edge-constraint nets in the spirit of \eqref{eq:scale-rotation} together with a condition for the result to be integrable into a new face-edge-constraint net:
\begin{definition}
\label{def:spin_tr}
Let $\mathfrak{X}$ be a face-edge-constraint net. The discrete spin transformation $s_\phi$ with respect to $\phi$, which is a map from faces to quaternion $\phi:F\rightarrow \mathbb{H}$,  is given by (\cref{fig:spintransformation}):
\begin{align*}
\s_\phi(E_{ij}) &= \overline{\phi_i}\cdot E_{ij}\cdot \phi_{j}\\
\s_\phi(n_i) &= \phi_i^{-1} \cdot n_i \cdot \phi_i
\end{align*}
\end{definition}
\begin{figure}[h]
\centering
\def\svgwidth{0.9\columnwidth}
\import{figs/}{closeness.pdf_tex}
\caption{Discrete spin transformation}
\label{fig:spintransformation}
\end{figure}
\begin{theorem}
For a simply-connected net $\mathfrak{X}$, if 
\begin{equation}
  \label{eq:discrete-closedness}
  D_f \phi =\bm{\rho} \phi
\end{equation}
where $\bm{\rho}:F\rightarrow \R$ is a real function, then the imaginary parts of the hyperedges obtained by spin transformation are closed around every face.
\end{theorem}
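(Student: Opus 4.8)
The plan is to evaluate, for a fixed face $\Delta_i$, the sum $\sum_j \s_\phi(E_{ij})$ of the transformed hyperedges around it and to observe that the Dirac equation forces this quaternion to be purely real, so that its imaginary part --- which is exactly what ``closed around every face'' asserts --- vanishes. Concretely, I would first pull the common factor $\overline{\phi_i}$ out of the sum over the neighbouring faces $j$,
\[
\sum_j \s_\phi(E_{ij}) \;=\; \overline{\phi_i}\cdot\Big(\sum_j E_{ij}\cdot \phi_j\Big),
\]
and then recognise the inner sum through the reformulation of the Dirac operator derived above, $D_f(\phi)|_i=\sum_j E_{ij}\cdot\phi_j-\mathbf{H}_i\phi_i$, which rearranges to $\sum_j E_{ij}\cdot\phi_j=D_f(\phi)|_i+\mathbf{H}_i\phi_i$. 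Substituting the hypothesis $D_f\phi=\bm{\rho}\phi$ with $\bm{\rho}:F\to\R$ real gives $\sum_j E_{ij}\cdot\phi_j=(\bm{\rho}_i+\mathbf{H}_i)\phi_i$, a real scalar multiple of $\phi_i$, and therefore
\[
\sum_j \s_\phi(E_{ij}) \;=\; (\bm{\rho}_i+\mathbf{H}_i)\,\overline{\phi_i}\phi_i \;=\; (\bm{\rho}_i+\mathbf{H}_i)\,\lvert\phi_i\rvert^2\ \in\ \R .
\]
Hence $\sum_j \mathrm{Im}\!\big(\s_\phi(E_{ij})\big)=0$ for every face, which is precisely the closedness around each face of the new edges $\tilde e_{ij}:=\mathrm{Im}\!\big(\s_\phi(E_{ij})\big)$. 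As a by-product the real part $(\bm{\rho}_i+\mathbf{H}_i)\lvert\phi_i\rvert^2$ is the natural candidate for the integrated mean curvature of the transformed face, matching the smooth expression $(\rho+H)\lvert\phi\rvert^2$.

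To make the conclusion usable I would further note, using $E_{ij}=\overline{E_{ji}}$, that $\s_\phi(E_{ji})=\overline{\phi_j}\cdot\overline{E_{ij}}\cdot\phi_i=\overline{\,\overline{\phi_i}\cdot E_{ij}\cdot\phi_j\,}=\overline{\s_\phi(E_{ij})}$, so that $\tilde e_{ij}=-\tilde e_{ji}$ and the $\tilde e_{ij}$ define a genuine $\mathrm{Im}(\H)$-valued discrete $1$-form on the oriented net. Since it is closed around every face and $\mathfrak{X}$ is simply connected, this $1$-form is exact, hence integrates to an immersion $\tilde f:V\to\R^3$ with $\tilde f_j-\tilde f_i=\tilde e_{ij}$ along each oriented edge; this is the only point at which the simple-connectivity hypothesis is used.

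I do not expect a genuine obstacle here: once the reformulation $D_f(\phi)|_i=\sum_j E_{ij}\phi_j-\mathbf{H}_i\phi_i$ is in hand, the computation is essentially a one-liner, the single idea being that $\overline{\phi_i}\,c\,\phi_i\in\R$ for real $c$. The only things that need a little care are tracking the orientation convention $e_{ij}=-e_{ji}$ (equivalently $E_{ij}=\overline{E_{ji}}$) so the transformed quantities sit consistently on oriented edges, and being explicit that ``closed around every face'' is the per-face vanishing of $\sum_j\tilde e_{ij}$, with simple connectivity invoked only afterwards to upgrade closedness to exactness.
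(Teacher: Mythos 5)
Your proposal is correct and follows essentially the same route as the paper: pull out $\overline{\phi_i}$, apply the rewritten Dirac operator and the hypothesis $D_f\phi=\bm{\rho}\phi$ to get $\sum_j \s_\phi(E_{ij})=(\bm{\rho}_i+\mathbf{H}_i)\lvert\phi_i\rvert^2\in\R$, hence the imaginary parts close around each face. Your extra remarks on $\s_\phi(E_{ji})=\overline{\s_\phi(E_{ij})}$ and on integrating the closed form via simple connectivity are consistent with, though not required by, the paper's argument.
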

\begin{proof}
The spin transformation of the face $\Delta_i$ is
\begin{align*}
\sum\limits_{j} s_\phi(E_{ij}) &=\sum\limits_j \overline{\phi_{i}}\cdot E_{ij}\cdot \phi_j \\
&=\overline{\phi_i}\cdot (\sum\limits_j E_{ij}\cdot \phi_j)\\
&=2\overline{\phi_j}(\bm{\rho}_i+\mathbf{H}_i)\phi_i\\
&=2(\bm{\rho}_i+\mathbf{H}_i) \lvert \phi_i\rvert^2\\
\end{align*}
which is a real number. Hence the imaginary parts of the transformed hyperedges add to zero. 
\end{proof}

The following proposition shows that the spin transformation maps a face-edge-constraint net again to a face-edge-constraint net. 
\begin{proposition}
  \label{pro:preserve}
  Let $s$ be a spin-transformation as above. Then
\[\s(E_{ij})^{-1}\cdot \s(n_i)\cdot \s(E_{ij})=-\s(n_j).\]
\end{proposition}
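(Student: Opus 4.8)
The plan is to reduce the claim directly to \cref{eqn:edgeconstraint2}, which already states $E_{ij}^{-1}\cdot n_i\cdot E_{ij}=-n_j$, by cancelling the extra $\phi$-factors that the spin transformation introduces. Throughout we use that the spinor $\phi$ is nowhere vanishing, so that $\phi_i^{-1}$ exists and $\s(E_{ij})^{-1}=\phi_j^{-1}\cdot E_{ij}^{-1}\cdot\overline{\phi_i}^{-1}$ makes sense (here we invoked the anti-multiplicativity of the quaternionic inverse, $(abc)^{-1}=c^{-1}b^{-1}a^{-1}$).

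First I would substitute the definitions of $\s(E_{ij})$ and $\s(n_i)$ and write out $\s(E_{ij})^{-1}\cdot\s(n_i)\cdot\s(E_{ij})=\phi_j^{-1}\cdot E_{ij}^{-1}\cdot\overline{\phi_i}^{-1}\cdot\phi_i^{-1}\cdot n_i\cdot\phi_i\cdot\overline{\phi_i}\cdot E_{ij}\cdot\phi_j$. The key observation is that $\phi_i\overline{\phi_i}=\lvert\phi_i\rvert^2$ is a real scalar, hence central in $\H$: thus $\overline{\phi_i}^{-1}\cdot\phi_i^{-1}=(\phi_i\overline{\phi_i})^{-1}=\lvert\phi_i\rvert^{-2}$ and $\phi_i\cdot\overline{\phi_i}=\lvert\phi_i\rvert^2$ can be pulled out of the product and cancel one another, leaving precisely $\phi_j^{-1}\cdot\bigl(E_{ij}^{-1}\cdot n_i\cdot E_{ij}\bigr)\cdot\phi_j$.

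Then I would invoke \cref{eqn:edgeconstraint2} to replace $E_{ij}^{-1}\cdot n_i\cdot E_{ij}$ by $-n_j$, which yields $-\phi_j^{-1}\cdot n_j\cdot\phi_j=-\s(n_j)$, exactly the assertion. There is essentially no hard step: the only points requiring care are the order reversal when inverting a product of quaternions and the centrality of the real scalar $\lvert\phi_i\rvert^2$; no further appeal to the edge-constraint condition is needed beyond what \cref{eqn:edgeconstraint2} already packages. One may additionally remark that $\s(n_i)=\phi_i^{-1}\cdot n_i\cdot\phi_i$ is again a unit imaginary quaternion, since conjugation by a nonzero quaternion restricts to a rotation of $\mathrm{Im}(\H)$, so that the transformed data is a legitimate field of face normals; this is not needed for the identity itself.
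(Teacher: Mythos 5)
Your proof is correct and follows essentially the same route as the paper: expand the definitions, cancel the real factor $\lvert\phi_i\rvert^2$ (central in $\H$), and invoke \cref{eqn:edgeconstraint2} to replace $E_{ij}^{-1}\cdot n_i\cdot E_{ij}$ by $-n_j$, then conjugate by $\phi_j$. Your write-up is in fact slightly cleaner than the paper's, whose intermediate line contains a typographical slip in the placement of $E_{ij}^{-1}$ and $n_j$.
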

\begin{proof} A direct calculation yields:
\begin{align*}
    \s(E_{ij})^{-1}\cdot \s(n_i) \cdot \s(E_{ij}) &= (\overline{\phi_i}\cdot E_{ij}\cdot \phi_j)^{-1}\cdot \phi_i^{-1}\cdot n_i\cdot \phi_i\cdot (\overline{\phi_i}\cdot E_{ij}\cdot \phi_j)\\
    &=\phi_j^{-1}\cdot E_{ij}^{-1}\cdot n_j\cdot E_{ij}^{-1}\cdot \phi_j\\
    &=-\phi_j^{-1}\cdot n_j\cdot \phi_j \\
    &=-\s(n_j).
\end{align*}
\end{proof}
Let $\mathcal{X}$ be the space of all face-edge-constraint nets. For every $f\in \mathcal{X}$, every solution $\phi$ to \eqref{eq:discrete-closedness} gives rise to a new transformed face-edge-constraint net $\tilde f$. Its mean curvature $\widetilde{\mathbf{H}}$ changes from the original one $\mathbf{H}$ in the following way:
\begin{equation}
\label{eqn:meancurvature}
\widetilde{\mathbf{H}} = (\bm{\rho} + \mathbf{H}) \lvert \phi \rvert^2
\end{equation}
\begin{remark}
In smooth case we have the formula (see \cite{kamberov_bonnet_1998})
\begin{equation}
\label{eqn:changeofmch}
\tilde{H} \lvert \dif \tilde{f} \rvert = H \lvert \dif f \rvert +\rho \lvert \dif f \rvert  
\end{equation}
Let $h=H \lvert \dif  f \rvert $ be the mean curvature half-density, then \eqref{eqn:changeofmch} turns to
\begin{equation}
\label{eqn:smoothmch}
\tilde{h}=h+\rho \lvert  \dif f \rvert 
\end{equation}
Since the integrated mean curvature $\mathbf{H}$ is approximately $H \lvert \dif f\rvert^2$, we define the discrete mean curvature half-density by
\[\mathbf{h}_i:=\frac{\mathbf{H}_i}{\lvert \dif f\rvert} =\frac{\mathbf{H}_i}{\sqrt{\mathrm{Area}_i}}\]
then by $\widetilde{\mathrm{Area}_i} \approx \lvert \phi\rvert^4 \mathrm{Area}_i$ we have
\[\tilde{\mathbf{h}} \approx \mathbf{h}+\frac{\boldsymbol \rho}{\sqrt{\mathrm{Area}}}\]
therefore if we think of $\boldsymbol \rho$ as the integrated curvature potential, i.e., ${\boldsymbol \rho} \approx \rho \lvert \dif f \rvert^2$, it yields
\begin{equation}
\tilde{\mathbf{h}}_i \approx \mathbf{h} +\rho \lvert \dif f \rvert 
\end{equation}
which concides with the equation in smooth case \eqref{eqn:smoothmch}.
\end{remark}
\subsection{Minimal Surfaces and their Associated Family}
\begin{definition}
We call a face-edge-constraint net a minimal surface, if $\mathbf{H}_i = 0$ for all $i$.
\end{definition}
We know that if $\phi$ is a solution to the Dirac equation
\begin{equation}
\label{eqn:minimalsurface}
D_f \phi=-\mathbf{H} \phi
\end{equation}
then the spin transformation gives a minimal surface by \cref{eqn:meancurvature}. Recall that in smooth case a minimal surface doesn't come alone but always with an associated family \cite{Bobenko_1994}. In complete analogy, we will now see that there is a corresponding construction for face-edge-constraint minimal surfaces. Suppose $\phi_i$ is a solution to \eqref{eqn:minimalsurface}, then it is easy to verify that the following quaternionic functions parametrized by $\lambda$ all satisfy \eqref{eqn:minimalsurface} as well
\[\phi(\lambda)|_i = (\cos \lambda+\sin \lambda n_i) \cdot \phi_i\]
The explicit formula tor the associated family then is given by
\begin{align}
\label{eqn:family}    s(\lambda)(E_{ij}) &= \overline{\phi(\lambda)_i} E_{ij}\phi(\lambda)_j \\
    &=\overline{(\cos \lambda +\sin \lambda n_i)\phi_i}E_{ij}\cdot (\cos \lambda+\sin \lambda n_j)\phi_j \nonumber\\
    &=\overline{\phi_i}(\cos \lambda-\sin \lambda n_i)E_{ij}(\cos \lambda+\sin \lambda n_j)\phi_j \nonumber\\
    &=\overline{\phi_i}(\cos \lambda E_{ij}-\sin\lambda n_iE_{ij})(\cos \lambda+\sin \lambda n_j)\phi_j \nonumber\\
    &=\overline{\phi_i}(\cos^2\lambda E_{ij}+\cos \lambda\sin \lambda E_{ij}n_j \nonumber\\
    &\phantom{{}={}}-\sin\lambda\cos \lambda n_iE_{ij}-\sin^2\lambda n_iE_{ij}n_j)\phi_j\nonumber\\
    &=\overline{\phi_i}(\cos 2\lambda E_{ij}-\sin2\lambda n_iE_{ij})\phi_j \nonumber
\end{align}

In \cite{lam_discrete_2016} Lam shows that there exists an associated family which contains two types of well-known minimal surfaces, $A$-minimal surfaces coming from the discrete integrable system and $C$-minimal surfaces coming from an area variational approach.
 
\begin{definition}[\cite{lam_discrete_2016}]
\label{def:am}
Let $X = (V,E,F)$ be an oriented net with the immersion $f:V\rightarrow \mathbb{R}^3$ and unit vectors defined on faces $n:F\rightarrow \mathbb{S}^2$. $(X,f,n)$ is called an A-minimal surface if and only if
\begin{align}
\label{def:am1}df_{ij}\times (n_i-n_j) &= 0,\quad \text{for all edges $e_{ij}\in E$} \\
\label{def:am2}\langle n_i + n_j, df_{ij}\rangle &= 0,\quad \text{for all edges $e_{ij}\in E$.}
\end{align}
\end{definition}

\begin{definition}[\cite{lam_discrete_2016}]
\label{def:cm}
Let $X = (V,E,F)$ be an oriented net, $f:V\rightarrow \mathbb{R}^3$ be the immersion with planar faces and $n:F\rightarrow \mathbb{S}^2$ be the real face normals. Let $\theta_{ij}:=\angle (n_i,n_j)$ be the angle between neighbouring face normals. $(X,f,n)$ is called a C-minimal surface if and only if
\begin{equation}
\label{eqn:cm}
\mathbf{H}_i := \sum_j\lvert df_{ij}\rvert \tan \frac{\theta_{ij}}{2}
\end{equation}
vanishes for all faces $i\in F$, where $j$ runs through all neighboring faces of $i$. 
\end{definition}

\begin{theorem}
The two types of discrete minimal surfaces above are both special face-edge-constraint minimal nets.
\begin{enumerate}
\item The A-minimal surface is the face-edge-constraint minimal net with vanishing integrated mean curvature over edges, i.e.,  $\mathbf{H}_{ij} = 0$ for all edges $e_{ij}$.  
\item The C-minimal surface is the classical face-edge-constraint minimal surface.
\end{enumerate}
\end{theorem}
\begin{proof}\mbox{}
\begin{enumerate}
\item It is easy to see that the condition \eqref{def:am2} is our condition of face-edge-constraint \eqref{eqn:edgeconstraint}. Moreover,  \eqref{def:am1} and \eqref{def:am2} imply that the vectors $n_i$, $n_j$ and $df_{ij}$ are coplanar, hence the dihedral angles $\theta_{ij}$ (\cref{def:dihedral}) vanish for all edges $e_{ij}\in E$. Therefore $\mathbf{H}_{ij}=0$ for all edges $e_{ij}$.
\item Clearly, when $n_i$ is the real face normal, $(X,f,n)$ is always a face-edge-constraint net (we call it a classical net, \cref{rem:classical}) and \eqref{eqn:cm} only differs from our integrated mean curvature \eqref{def:int_mc} by a constant factor.
\end{enumerate}
\end{proof}

It is then not surprising to see that the associated family given in \cite{lam_discrete_2016} can be reformulated with our spin transformation \eqref{eqn:family}. Moreover, our face-edge-constraint minimal surface is a generalization of the minimal surfaces in \cite{lam_discrete_2016}.

\begin{remark}
\label{rem:min_gen}
While the definitions in \cite{lam_discrete_2016} can model the minimal surface with curvature-line parameterization (C-minimal surface), asymptotic parameterization (A-minimal surface) and their associated family, our new definition covers more general minimal surfaces with arbitrary parameterization. 
\end{remark}

\paragraph{A Weierstrass representation}
Recall that in \cite{bobenko_holomorphic_2016} Lam and Pinkall define a 
discrete holomorphic quadratic differential $q: E\rightarrow \mathrm{Im}(\mathbb{C})$ on a planar triangulated mesh $z:V\to\mathbb{C}$, $X=(V,E,F)$ in the complex plane and they show that this gives rise to the two types minimal surfaces (\cref{def:am} and \cref{def:cm}) by means of a discrete analogue of the Weierstrass representation. By \cref{rem:min_gen} we know that our face-edge-constraint minimal surfaces can be considered as a generalization of these two types minimal surfaces and indeed we can generalize the discrete holomorphic quadratic differential, removing the restriction on $q$ of being purely imaginary, and obtain the following generalized discrete holomorphic quadratic differential:
\begin{definition}
Given a planar net on the complex plane $z:V\rightarrow \mathbb{C}$. A holomorphic quadratic differential is a funtion $q: E\rightarrow \mathbb{C}$ such that
\[\sum_j q_{ij}=0, \quad \text{for all vertices } i\in V\]
\[\sum_j q_{ij}/\dif z(e_{ij})=0,\quad \text{for all vertices } i\in V,\]
where $j$ runs through all the neighboring vertices of $i$. 
\end{definition}
Now, we are going to show that this holomorphic quadratic differential always gives a family of minimal surfaces in a similar manner to \cite{bobenko_holomorphic_2016}.
\begin{theorem}
Let $z:V\rightarrow \mathbb{C}$ be a realization of a simply connected triangular mesh and $q:E\rightarrow \mathbb{C}$ a holomorphic quadratic differential. Then there exists a minimal face-edge-constraint net $\mathfrak{X}_q$:
\begin{align*}
E_{ij} &= \mathrm{Re} \left( q_{ij}+ \frac{q_{ij}}{i(z_j-z_i)}  \left(  (1-z_iz_j)\qi + i(1+z_iz_j) \qj + (z_i + z_j)\qk \right)\right) \\
n &= \frac{1}{\lvert z\rvert^2+1}\begin{pmatrix} 2\mathop\mathrm{Re} z \\ 2\mathop\mathrm{Im} z \\ \lvert z\rvert^2-1 \end{pmatrix}
\end{align*}
where $\mathop\mathrm{Re}$ means taking the real part of each component of the quaternion. 
\end{theorem}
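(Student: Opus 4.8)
The plan is to realize $\mathfrak{X}_q$ on the dual complex of $X$: a face of $\mathfrak{X}_q$ will correspond to a vertex $i$ of the triangulation and carry the unit normal $n_i:=n(z_i)$ (the inverse stereographic image of $z_i$), an edge of $\mathfrak{X}_q$ will correspond to an edge $\{i,j\}$ of $X$ and carry the quaternion $E_{ij}$ as written, and the vertices of $\mathfrak{X}_q$ (the triangles of $X$) will get positions by integrating the imaginary parts $e_{ij}:=\mathrm{Im}(E_{ij})$. Under this dictionary ``the $e_{ij}$ close up around every face of $\mathfrak{X}_q$'' means $\sum_{j\sim i}e_{ij}=0$ around the star of every vertex $i$, and ``$\mathfrak{X}_q$ is minimal'' means $\sum_{j\sim i}\mathbf{H}_{ij}=0$ for every $i$; both will fall out of the two defining equations of $q$. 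The relation $E_{ij}=\overline{E_{ji}}$ needed to make $\mathfrak{X}_q$ an oriented net is immediate: $q$ is edge- rather than orientation-valued, and swapping $i\leftrightarrow j$ leaves the quaternionic bracket and $\mathrm{Re}(q_{ij})$ fixed while reversing the sign of the prefactor $q_{ij}/(i(z_j-z_i))$, so $\mathrm{Re}(E_{ij})=\mathrm{Re}(q_{ij})$ is symmetric in $i,j$ and $e_{ij}=-e_{ji}$.

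For closedness, put $w_{ij}:=q_{ij}/\bigl(i(z_j-z_i)\bigr)$, so that the two conditions on $q$ become, at each vertex $i$, the pair $\sum_{j\sim i}w_{ij}=0$ and $\sum_{j\sim i}z_jw_{ij}=0$ (the equation $\sum_j q_{ij}/\dif z(e_{ij})=0$ gives the first, and feeding it back into $\sum_j q_{ij}=0$ gives the second). In these variables
\[
e_{ij}=\mathrm{Re}\!\bigl(w_{ij}(1-z_iz_j)\bigr)\,\qi+\mathrm{Re}\!\bigl(iw_{ij}(1+z_iz_j)\bigr)\,\qj+\mathrm{Re}\!\bigl(w_{ij}(z_i+z_j)\bigr)\,\qk ,
\]
and summing each coefficient over $j\sim i$ one is left with $\mathrm{Re}\bigl(\alpha\sum_j w_{ij}+\beta\sum_j z_jw_{ij}\bigr)$ with $\alpha,\beta$ depending on $z_i$ only — no term quadratic in $z_j$ survives, which is exactly what this special quadratic form achieves — so all three coefficients vanish and $\sum_{j\sim i}e_{ij}=0$. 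Since $X$, and hence its dual, is simply connected, the $e_{ij}$ integrate path-independently to a well-defined immersion $f$ of the vertices of $\mathfrak{X}_q$ into $\R^3$.

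It remains to see that $\mathfrak{X}_q$ is genuinely a face edge-constraint net and that $\mathrm{Re}(E_{ij})$ is its integrated mean curvature, and this I would reduce to the single pointwise quaternionic identity
\[
E_{ij}\cdot n_i\cdot E_{ij}^{-1}=-n_j ,
\]
equivalently $\overline{E_{ij}}^{-1}\cdot n_i\cdot\overline{E_{ij}}=-n_j$ — the face-edge-constraint relation of Proposition~\ref{eqn:edgeconstraint2} for the hyperedge $\overline{E_{ij}}$ — which uses only $z_i,z_j,q_{ij}$ and none of the closedness hypotheses. Granting it: conjugation by $E_{ij}=\mathrm{Re}(q_{ij})+e_{ij}$ is a rotation about the axis $e_{ij}$, so it fixes the $e_{ij}$-component of $n_i$; comparing with $-n_j$ yields $\langle n_i+n_j,e_{ij}\rangle=0$, the edge-constraint~\eqref{eqn:edgeconstraint}. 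With the edge-constraint in hand, part~(2) of the lemma of Section~\ref{sec:preliminaries} (with $w_1=n_i$, $w_2=n_j$, $u$ the imaginary part of the hyperedge $\overline{E_{ij}}$) says that the \emph{unique} real number completing $u$ to a conjugating quaternion is $|e_{ij}|\tan(\theta_{ij}/2)=\mathbf{H}_{ij}$; since $\mathrm{Re}(q_{ij})$ is that number, $\mathrm{Re}(q_{ij})=\mathbf{H}_{ij}$, and minimality follows: $\mathbf{H}_i=\sum_{j\sim i}\mathbf{H}_{ij}=\mathrm{Re}\bigl(\sum_{j\sim i}q_{ij}\bigr)=0$.

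The one genuinely computational point, and the one I expect to be the main obstacle, is the conjugation identity $E_{ij}\,n_i\,E_{ij}^{-1}=-n_j$. I would either grind it out in the complexified quaternions $\mathbb{C}\otimes_{\R}\H$ — clearing the factors $i(z_j-z_i)$, $|z_i|^2+1$, $|z_j|^2+1$, and, if convenient, precomposing with a M\"obius move that sends $z_i$ to $0$ — or, more structurally, recognize the bracket $(1-z_iz_j)\qi+i(1+z_iz_j)\qj+(z_i+z_j)\qk$ as a spinor bilinear form in the stereographic spinors whose Gauss maps are $n(z_i)$ and $n(z_j)$, which turns the identity into the discrete shadow of the classical Weierstrass formula and makes it formal. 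The remaining bookkeeping — the orientation of edges within dual faces, the sign convention for the bending angle, and the fact that $q$ is now allowed to be complex rather than purely imaginary as in \cite{bobenko_holomorphic_2016} — is routine.
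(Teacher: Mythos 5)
Your skeleton is sound and, where you actually carry it out, correct, and it is the same route the paper takes: you verify the closedness of the imaginary parts around each dual face directly from $\sum_j w_{ij}=0$ and $\sum_j z_jw_{ij}=0$ (the paper merely cites the proof of Theorem 6.3 in the Lam--Pinkall reference for this step, so your self-contained computation, valid for complex $q$, is if anything more complete), the symmetry $E_{ji}=\overline{E_{ij}}$ is right, and reducing both the edge constraint and the identification of $\mathrm{Re}(q_{ij})$ with the per-edge mean curvature to a single conjugation identity, via the axis argument and the uniqueness statement in the lemma of \cref{sec:preliminaries}, is exactly what the paper does. The genuine gap is that you never prove that conjugation identity. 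It is the only place where the specific quaternionic expression in the theorem enters --- it is the substance of the statement --- and you explicitly defer it (``the main obstacle'', ``I would either grind it out \dots or recognize \dots''). The paper's proof is precisely this identity asserted as a direct computation plus the two observations you do make, so without the computation your proposal is a correct frame with the load-bearing piece missing.

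Moreover, the verification is not the ``routine bookkeeping'' your last sentence suggests, because the direction of conjugation genuinely matters once $\mathrm{Re}(q_{ij})\neq 0$: $E^{-1}n_iE=-n_j$ and $E\,n_iE^{-1}=-n_j$ are inequivalent conditions for a hyperedge with nonzero real part (they coincide only in the purely imaginary case of Lam--Pinkall), and for the formula as printed it is your version that holds. For instance, with $z_i=0$, $z_j=t\in\R$, $q_{ij}=a\in\R$ one gets $E_{ij}=a(1+\qj/t)$, $n_i=-\qk$, $n_j=\frac{2t\qi+(t^2-1)\qk}{t^2+1}$, and a short product shows $E_{ij}\,n_i\,E_{ij}^{-1}=-n_j$ while $E_{ij}^{-1}n_iE_{ij}\neq-n_j$; so the orientation in which \cref{eqn:edgeconstraint2} is quoted has to be flipped. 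Having done so, note that the axis you then feed into part (2) of the lemma is $\mathrm{Im}(\overline{E_{ij}})=-e_{ij}$, and the bending angle measured about $-e_{ij}$ is $-\theta_{ij}$ in the paper's convention, so the uniqueness argument yields $\mathrm{Re}(q_{ij})=-\mathbf{H}_{ij}$ rather than $+\mathbf{H}_{ij}$. Neither sign endangers the theorem --- the edge constraint is insensitive to it and $\sum_j\mathrm{Re}(q_{ij})=0$ gives $\mathbf{H}_i=0$ either way --- but they must be tracked consistently, and above all the conjugation identity has to be established for general $z_i,z_j,q_{ij}$ (your proposed normalization $z_i=0$ by a M\"obius move also needs an equivariance argument for the formula before it can be used) before this counts as a proof.
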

\begin{proof}
To see that the imaginary parts of the hyperedges are closed around each face, we refer to the proof of Theorem 6.3 in \cite{bobenko_holomorphic_2016}. By direct computation we have
\[E_{ij}^{-1}\cdot n_i \cdot E_{ij} = -n_j\]
indicating that $\mathfrak{X}_q$ is  indeed a face-edge-constraint net. Note, that the integrated mean curvature for an edge is $H_{ij}=\mathop\mathrm{Re}(q_{ij})$, hence 
\[H_{i}= \sum_j \mathop\mathrm{Re}(q_{ij}) =0 \]
at any face $\Delta_i$ by assumption, showing that $\mathfrak{X}_q$ is minimal.
\end{proof}
\begin{remark}
We can construct the associated family of a minimal surface by rotating $q_{ij}$ with a constant unit complex number, $q_{ij}\rightarrow e^{\lambda i} q_{ij}$, which is basically equivalent to what we have done in \eqref{eqn:family}.
\end{remark}

\subsection{A Spin Multi-Ratio}
In this section we shall investigate an invariant of the spin transformation. It turns out that this invariant -- we will call it the spin multi-ratio -- actually fully characterizes face-edge-constraint nets up to spin equivalence.
\begin{definition}
A path in a net $X$ is a sequence of faces \[\gamma=(\gamma(1),\gamma(2),\ldots,\gamma(n))\] where $\gamma(i)$ and $\gamma(i+1)$ are neighbouring faces or $\overrightarrow{\gamma(i)\gamma(i+1)}\in E^*$. The length of the path is defined by the number of dual edges in the path, i.e.,
\[\lvert \gamma=(\gamma(1),\gamma(2),\cdots,\gamma(n))\rvert=n-1\]
Given a face-edge-constraint net $\mathfrak{X}=(X,f,n)$ the spin multi-ratio $\mathrm{cr}_\mathfrak{X}$ is a map from the set of all the paths to the quaternions 
\[\mathrm{cr}(\gamma)=\begin{cases} \overline{E_{\gamma(1),\gamma(2)}}^{-1}\cdot E_{\gamma(2),\gamma(3)}\cdot \ldots\cdot  E_{\gamma(n-1),\gamma(n)} & \lvert \gamma\rvert\text{ is even} \\ \overline{E_{\gamma(1),\gamma(2)}}^{-1}\cdot E_{\gamma(2),\gamma(3)}\cdot \ldots \cdot  \overline{E_{\gamma(n-1),\gamma(n)}}^{-1} & \lvert \gamma\rvert\text{ is odd} \end{cases} \]
\end{definition}
\begin{figure}[h]
\centering
\includegraphics[scale=0.2]{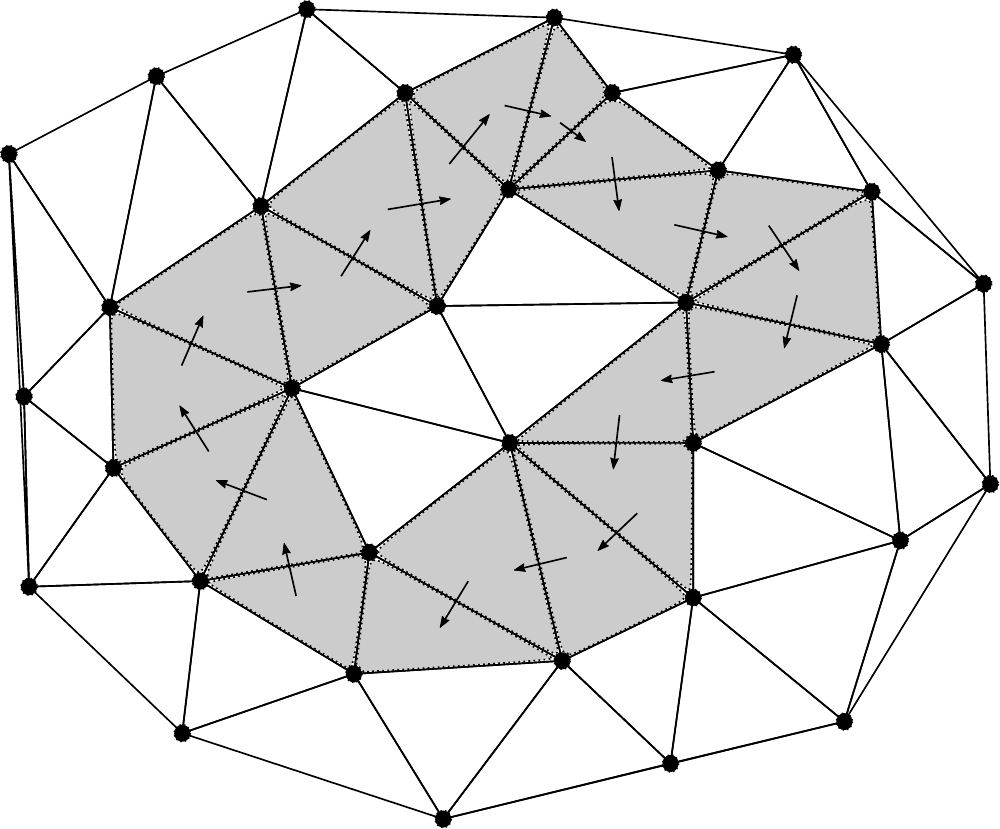}
\caption{A loop}\label{fig:loop}
\end{figure}
\begin{definition}
A loop at $\Delta_i$ is a path starting and ending both at the same face $\Delta_i$(\cref{fig:loop}). Let's define  an equivalence relation on the sets of all loops at $i$ by:
\[(\cdots,i,j,i,\cdots)\sim (\cdots,i,\cdots)\]
Then the set of all the loops at $\Delta_i$ modulo the equivalence relation is endowed with a group structure by:
\[\gamma_1\cdot \gamma_2=(\gamma_1(1),\gamma_1(2),\cdots,\gamma_1(n),\gamma_2(1),\gamma_2(2),\cdots,\gamma_2(m),\gamma_2(1))\]
where $\gamma_1=\big(\gamma_1(1),\cdots,\gamma_1(n),\gamma_1(1)\big)$ and $\gamma_2=\big(\gamma_2(1),\cdots,\gamma_2(m),\gamma_2(1)\big)$
and
\[\gamma_1^{-1}=\big(\gamma_1(1),\gamma_1(n),\cdots,\gamma_1(2),\gamma_1(1)\big)\]
We denote this group at $i$ by $\mathcal{O}_i$. Furthermore, $\mathcal{O}_i^{even}$ is the subgroup which consists of all the loops of even length at $i$, i.e.,
\[\mathcal{O}^{even}_i=\{\gamma\in \mathcal{O}_i|\; \lvert \gamma\rvert \text{ is even}\}\]
Note that the map $\mathrm{cr}_\mathfrak{X}$ restricted on $\mathcal{O}_i^{even}$ is a group homomorphism to $\H$.
\end{definition}
The next proposition shows how the spin multi-ratio changes under a spin transformation.
\begin{proposition}
Let $\mathrm{s}_\phi$ be the spin transformation
\[\mathrm{s}_\phi:\mathfrak{X}\mapsto \mathfrak{X}^\prime\]
with respect to the spinor $\phi$. Then
\[\mathrm{cr}_{\mathfrak{X}^\prime}(\gamma_i)=\begin{cases} \phi_i^{-1}\cdot \mathrm{cr}_\mathfrak{X}(\gamma_i)\cdot \phi_i & \lvert \gamma_i\rvert \text{ is even} \\ \phi_i^{-1}\cdot \mathrm{cr}_\mathfrak{X}(\gamma_i)\cdot \overline{\phi_i}^{-1} & \lvert \gamma_i\rvert  \text{ is odd}\end{cases}\]
Therefore the argument and the norm of the spin multi-ratio are preserved if the length of the loop is even.
\end{proposition}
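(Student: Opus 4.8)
The plan is to reduce the statement to a single telescoping product in $\H$. First I would fix a loop $\gamma_i=(i_1,\dots,i_n)$ based at $\Delta_i$, so $i_1=i_n=i$, and record how the two building blocks of the spin multi-ratio behave under $\mathrm{s}_\phi$. Starting from $\mathrm{s}_\phi(E_{i_k i_{k+1}})=\overline{\phi_{i_k}}\cdot E_{i_k i_{k+1}}\cdot\phi_{i_{k+1}}$ and using $\overline{abc}=\overline c\,\overline b\,\overline a$ together with the fact that inversion reverses products, one obtains
\[
\overline{\mathrm{s}_\phi(E_{i_k i_{k+1}})}^{\,-1}=\phi_{i_k}^{-1}\cdot\overline{E_{i_k i_{k+1}}}^{\,-1}\cdot\overline{\phi_{i_{k+1}}}^{\,-1}.
\]
The key observation is that in the word defining $\mathrm{cr}$ the $k$-th hyperedge enters as $\overline{\,\cdot\,}^{-1}$ for odd $k$ and unchanged for even $k$, and the two displayed expressions carry exactly the matching spinor factors ($\phi^{-1}\ldots\overline\phi^{-1}$ in the odd slots, $\overline\phi\ldots\phi$ in the even slots) needed for the interior cancellations.

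Next I would substitute these into $\mathrm{cr}_{\mathfrak{X}^\prime}(\gamma_i)$ and collapse the product left to right. At the interior face $i_{k+1}$, where the $k$-th factor meets the $(k+1)$-st, the parities are opposite: if $k$ is odd the $k$-th factor ends in $\overline{\phi_{i_{k+1}}}^{\,-1}$ and the $(k+1)$-st begins with $\overline{\phi_{i_{k+1}}}$; if $k$ is even the $k$-th factor ends in $\phi_{i_{k+1}}$ and the $(k+1)$-st begins with $\phi_{i_{k+1}}^{-1}$. Either way the adjacent factors cancel, so all the spinor values at $i_2,\dots,i_{n-1}$ drop out, and what survives between $\phi_{i_1}^{-1}$ on the far left and the last factor on the far right is precisely $\mathrm{cr}_{\mathfrak{X}}(\gamma_i)$. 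The last factor, coming from the edge at position $n-1$, contributes $\phi_{i_n}$ when $n-1$ is even and $\overline{\phi_{i_n}}^{\,-1}$ when $n-1$ is odd; since $i_1=i_n=i$ and $|\gamma_i|=n-1$, this yields exactly the two cases in the statement.

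For the final assertion I would argue that, when $|\gamma_i|$ is even, $a\mapsto\phi_i^{-1}a\phi_i$ is conjugation by a nonzero quaternion: writing $\phi_i=|\phi_i|\,u$ with $u$ a unit quaternion, the moduli cancel and $a\mapsto u^{-1}au$ is a rotation of $\mathrm{Im}(\H)$ fixing the real axis (as in \cref{sec:preliminaries}). Hence $|\phi_i^{-1}a\phi_i|=|a|$ and $\mathrm{Re}(\phi_i^{-1}a\phi_i)=\mathrm{Re}(a)$, so both the norm and the argument of $\mathrm{cr}_{\mathfrak{X}}(\gamma_i)$ are preserved; in the odd case the outer factors $\phi_i^{-1}$ and $\overline{\phi_i}^{\,-1}$ do not form a conjugation and neither quantity is preserved in general.

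I do not anticipate a genuine obstacle here: the entire argument is bookkeeping, and the only thing one must get right is the alternating $\overline{\,\cdot\,}^{-1}$/identity pattern in the definition of $\mathrm{cr}$ and the verification that the spinor factors produced at the shared endpoints of consecutive (transformed) hyperedges actually match up so the telescoping closes. The same matching is what makes the formula compatible with the excursion relation $(\dots,i,j,i,\dots)\sim(\dots,i,\dots)$, since a back-and-forth step contributes $\overline{E_{ij}}^{\,-1}E_{ji}=\mathbbm{1}$ or $E_{ij}\overline{E_{ji}}^{\,-1}=\mathbbm{1}$ by $E_{ij}=\overline{E_{ji}}$; I would include this remark for completeness. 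Finally one should note that, exactly as in the definition of $\mathrm{s}_\phi$, every $\phi_i$ is tacitly assumed invertible.
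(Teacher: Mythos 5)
Your proof is correct: the paper states this proposition without any proof (it is left as a direct computation), and your telescoping argument — using $\overline{\s_\phi(E_{ij})}^{-1}=\phi_i^{-1}\overline{E_{ij}}^{-1}\overline{\phi_j}^{-1}$ so that the spinor factors at interior faces cancel in alternating pairs, with the parity of the last slot producing $\phi_i$ or $\overline{\phi_i}^{-1}$ — is exactly the verification one would write down, and your closing observation that conjugation by $\phi_i$ preserves norm and real part (hence argument) settles the even case. The extra remarks on compatibility with the excursion relation via $E_{ij}=\overline{E_{ji}}$ and on invertibility of $\phi$ are accurate and harmless additions.
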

From now on we simply index the faces in the loop by $\gamma=(1,2,\cdots,n,1)$.
\begin{remark}
\label{rem:length}
The norm of the spin multi-ratio contains the information of the edge length as well as the dihedral angles:
\begin{align*}
\lvert \mathrm{cr}(\gamma)\rvert &=\lvert E_{12}^{-1}\rvert\cdot \lvert E_{23}\rvert\cdot \cdots \cdot \lvert E_{n1}\rvert^{(-1)^n} \\
&=\lvert \cos\frac{\theta_{12}}{2}\rvert\cdot  \cdots \cdot \lvert \cos \frac{\theta_{n1}}{2}\rvert^{-1^{n+1}}\cdot \lvert e_{12}\rvert^{-1}\cdot  \cdots \cdot \lvert e_{n1}\rvert^{(-1)^n}
\end{align*}
\end{remark}
\begin{proposition}
\label{pro:normal}
For a loop $\gamma=(1,2,\cdots,n,1)$ of even length the axis of the spin multi-ratio $\mathrm{cr}(\gamma)$ is always parallel to the normal $n_1$ on $\gamma(1)$. For a loop with odd length the spin multi-ratio is always purely imaginary and perpendicular to $n_1$.
\end{proposition}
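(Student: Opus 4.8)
The plan is to set aside the quaternion $\mathrm{cr}(\gamma)$ itself for a moment and instead compute how it acts on the normal of the base face by conjugation; the two assertions then fall out of elementary facts about unit quaternions and $\mathrm{SO}(3)$. Index the loop as $\gamma=(1,2,\dots,n,1)$ with base face $i=1$, so that, following the alternating pattern in the definition (and consistently with \cref{rem:length}), $\mathrm{cr}(\gamma)=q_1q_2\cdots q_n$ with $q_k=\overline{E_{k,k+1}}^{-1}$ for odd $k$ and $q_k=E_{k,k+1}$ for even $k$, all indices read cyclically so that the last factor involves $E_{n,1}$.

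The first---and only slightly delicate---step is to check that every factor acts on normals in the same way: conjugation by $q_k$ carries $n_k$ to $-n_{k+1}$. For even $k$ this is precisely \cref{eqn:edgeconstraint2}. For odd $k$ one observes that for any nonzero quaternion $a$ one has $\overline{a}=\lvert a\rvert^2 a^{-1}$, hence $w\mapsto(\overline{a}^{-1})^{-1}\,w\,\overline{a}^{-1}=\overline{a}\,w\,\overline{a}^{-1}=a^{-1}wa$, so conjugation by $\overline{E_{k,k+1}}^{-1}$ is exactly conjugation by $E_{k,k+1}$, and \cref{eqn:edgeconstraint2} applies again. Composing, conjugation by $\mathrm{cr}(\gamma)=q_1\cdots q_n$ amounts to applying conjugation by $q_1$, then by $q_2$, and so on; feeding in $n_1$ and invoking this identity, a one-line induction gives that the image after the first $k$ factors is $(-1)^k n_{k+1}$, so after all $n$ of them we arrive at $(-1)^n n_{n+1}=(-1)^n n_1$.

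It then remains to translate this into the claimed shape of the (nonzero, since every $E_{ij}\neq 0$) quaternion $\mathrm{cr}(\gamma)$. If $\lvert\gamma\rvert=n$ is even, conjugation by $\mathrm{cr}(\gamma)$ fixes the nonzero vector $n_1$, so the induced rotation is either trivial or a rotation about the line $\mathbb{R}\,n_1$; in either case the imaginary part of $\mathrm{cr}(\gamma)$ is a real multiple of $n_1=n_i$, i.e.\ its axis is parallel to $n_i$. If $n$ is odd, conjugation by $\mathrm{cr}(\gamma)$ sends $n_1$ to $-n_1$; since an element of $\mathrm{SO}(3)$ reversing a unit vector must be the half-turn about some axis orthogonal to it, the corresponding unit quaternion has vanishing scalar part, so $\mathrm{cr}(\gamma)$ is purely imaginary and its imaginary part is perpendicular to $n_i$. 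I do not anticipate a genuine obstacle: the only things needing care are the bookkeeping of which factors are barred and inverted together with the cyclic indexing, and a clean statement of the two small lemmas used (conjugation by $\overline{a}^{-1}$ equals conjugation by $a$; an $\mathrm{SO}(3)$ element that reverses a nonzero vector is the half-turn about an orthogonal axis).
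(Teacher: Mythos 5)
Your proposal is correct and follows essentially the same route as the paper: the paper likewise decomposes conjugation by $\mathrm{cr}(\gamma)$ into successive edge rotations each sending $n_{\gamma(k)}$ to $-n_{\gamma(k+1)}$, concludes $n_i\mapsto(-1)^{\lvert\gamma\rvert}n_i$, and reads off the axis (parallel in the even case, a half-turn about a perpendicular axis in the odd case). Your only addition is to spell out the useful bookkeeping fact that conjugation by $\overline{E}^{-1}$ coincides with conjugation by $E$, which the paper leaves implicit.
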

\begin{proof}
Consider the rotation of $n_1$ by $\mathrm{cr}(\gamma)$:
\[\mathrm{cr}(\gamma)^{-1}\cdot n_{1} \cdot \mathrm{cr}(\gamma)\]
it can be decomposed to successive rotations and each of these rotations takes the normal $n_{\gamma(i)}$ to the $-n_{\gamma(i+1)}$. Hence after an even number of rotations the normal $n_{1}$ comes back to itself, i.e., 
\[\mathrm{cr}(\gamma)^{-1}\cdot n_{i} \cdot \mathrm{cr}(\gamma)=n_{1}\]
Since $n_{1}$ is a fix point of rotation represented by $\mathrm{cr}(\gamma)$, the axis of $\mathrm{cr}(\gamma)$ is exactly $n_{1}$.\\
 In case of an odd number of rotations one ends up with
\[\mathrm{cr}(\gamma)^{-1}\cdot n_{1} \cdot \mathrm{cr}(\gamma)=-n_{1}\]
so $\mathrm{cr}(\gamma)$ must furnish a 180 degree rotation (thus it is purely imaginary) with an axis perpendicular to $n_1$.
\end{proof}
With \cref{rem:length} and \cref{pro:normal} we have a clear understanding of the geometric meaning of the norm and direction of the spin multi-ratio.  Next we are going to show some geometric interpretation of its argument. Since now we only care about the argument, we use a modified version of spin multi-ratio, denoted by $\hat{\mathrm{cr}}$, for the purpose of simplicity.%
\[\hat{\mathrm{cr}}(\gamma) :=E_{12}\cdot E_{23}\cdot \cdots \cdot E_{n1}\]
which differs from the true spin multi-ratio only by a scalar factor.\\
The rough idea is the following: one can rigidly unfold a classical net so that the spin multi-ratio would be factorized into two parts, both of  which are easily understood. If the net is not classical one can first project the edges onto the planes perpendicular to the normals and carry out the unfolding.
\begin{lemma}
\label{lem:h}
Let $df^i_{ij}$ be the pure imaginary quaternion with the same length as $E_{ij}$ and parallel to the projection of $df_{ij}$ onto the plane perpendicular to $n_i$, i.e., 
\[df^i_{ij}=\frac{\lvert E_{ij} \rvert \cdot \big(df_{ij}-\langle df_{ij},n_i\rangle n_i\big)}{\lvert df_{ij}-\langle df_{ij},n_i\rangle n_i \rvert }\]
Then $E_{ij}$ can be factorized into $E_{ij}=df^i_{ij}\cdot h_{ij}$, where $h_{ij}$ is the quaternion satisfying the following properties:
\begin{enumerate}
\item $h_{ij}$ is a unit quaternion with positive real part.
\item The axis of $h_{ij}$ is perpendicular both to $n_i$ and $n_j$.\hfill\refstepcounter{equation}\textup{(\theequation)}\label{eqn:h}
\item $h_{ij}^{-1}\cdot n_i\cdot h_{ij}=n_j$.
\end{enumerate}
\end{lemma}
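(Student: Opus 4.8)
The plan is to \emph{construct} $h_{ij}$ by the only formula that can possibly work, namely $h_{ij}:=(e^i_{ij})^{-1}\cdot E_{ij}$ (so that $E_{ij}=e^i_{ij}\cdot h_{ij}$ automatically), and then check the three listed properties one by one, finishing with a uniqueness remark. Throughout I would assume the non-degenerate situation $n_j\neq -n_i$, equivalently that $e_{ij}$ is not parallel to $n_i$, so that the projection $e_{ij}^{\perp}:=e_{ij}-\langle e_{ij},n_i\rangle n_i$ is nonzero and $e^i_{ij}$ is well defined. Note first that $|E_{ij}|=\sqrt{\mathbf{H}_{ij}^{2}+|e_{ij}|^{2}}>0$, so every inverse below exists, and that $|h_{ij}|=|E_{ij}|/|e^i_{ij}|=1$ by the defining length condition on $e^i_{ij}$; this is half of property~1.

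The remaining computations all come from expanding $h_{ij}$. Using $(e^i_{ij})^{-1}=-e^i_{ij}/|e^i_{ij}|^{2}$ and the product rule $ab=-\langle a,b\rangle+a\times b$ for imaginary quaternions, one gets, schematically,
\[
h_{ij}=\frac{\langle e^i_{ij},e_{ij}\rangle}{|e^i_{ij}|^{2}}\;-\;\frac{1}{|e^i_{ij}|^{2}}\big(\mathbf{H}_{ij}\,e^i_{ij}+e^i_{ij}\times e_{ij}\big).
\]
Since $e^i_{ij}$ is parallel to $e_{ij}^{\perp}$ with $|e^i_{ij}|=|E_{ij}|$, a one-line computation gives $\langle e^i_{ij},e_{ij}\rangle=|e^i_{ij}|\,|e_{ij}^{\perp}|$, hence $\mathrm{Re}(h_{ij})=|e_{ij}^{\perp}|/|E_{ij}|>0$, completing property~1. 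For the imaginary part, the same parallelism gives $e^i_{ij}\times e_{ij}=\langle e_{ij},n_i\rangle\,(e^i_{ij}\times n_i)$, so $\mathrm{Im}(h_{ij})$ is a linear combination of $e^i_{ij}$ and $e^i_{ij}\times n_i$, both orthogonal to $n_i$; therefore the rotation axis of $h_{ij}$, which is parallel to $\mathrm{Im}(h_{ij})$, is perpendicular to $n_i$.

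Property~3 is where \cref{eqn:edgeconstraint2} comes in: from $h_{ij}^{-1}=E_{ij}^{-1}\cdot e^i_{ij}$ one computes $h_{ij}^{-1}\cdot n_i\cdot h_{ij}=E_{ij}^{-1}\cdot\big(e^i_{ij}\cdot n_i\cdot(e^i_{ij})^{-1}\big)\cdot E_{ij}$, and since $e^i_{ij}$ is imaginary and orthogonal to $n_i$, conjugation by it is a half-turn about $e^i_{ij}$ sending $n_i\mapsto-n_i$ (the elementary fact recalled in \cref{sec:preliminaries}); \cref{eqn:edgeconstraint2} then turns $E_{ij}^{-1}\cdot(-n_i)\cdot E_{ij}$ into $n_j$. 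This also finishes property~2: the rotation $R\colon v\mapsto h_{ij}^{-1}vh_{ij}$ fixes its axis $a$ and maps $n_i$ to $n_j$, so $\langle a,n_j\rangle=\langle R(a),R(n_i)\rangle=\langle a,n_i\rangle=0$. Uniqueness follows because, for $n_i\neq\pm n_j$, the line $n_i^{\perp}\cap n_j^{\perp}$ is one-dimensional, so a rotation with axis in that line carrying $n_i$ to $n_j$ is unique; this pins $h_{ij}$ down up to sign, and the positive-real-part requirement removes the remaining ambiguity. I expect the only mild obstacle to be bookkeeping with the normalization of $e^i_{ij}$ together with a separate treatment of the degenerate cases $n_j=\pm n_i$ (for $n_j=n_i$ one checks directly that $h_{ij}=1$ works, and the case $e_{ij}\parallel n_i$ must be excluded since then $e^i_{ij}$ is undefined); none of the steps is deep.
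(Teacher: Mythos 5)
Your proposal is correct and follows essentially the same route as the paper: define $h_{ij}=(e^i_{ij})^{-1}\cdot E_{ij}$, expand the product to read off the unit norm, positive real part and $\mathrm{Im}(h_{ij})\perp n_i$, and then combine the half-turn identity $e^i_{ij}\cdot n_i\cdot (e^i_{ij})^{-1}=-n_i$ with \cref{eqn:edgeconstraint2} to obtain $h_{ij}^{-1}\cdot n_i\cdot h_{ij}=n_j$ and hence $\mathrm{Im}(h_{ij})\perp n_j$. The extra uniqueness and degenerate-case remarks go slightly beyond the paper's argument but do not change the approach.
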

\begin{proof}
It is easy to show that $\lvert df_{ij}^i\rvert=\lvert E_{ij}\rvert $ and hence $\lvert h_{ij}\rvert =1$. Then we have
\begin{align*}
    h_{ij} &= \epsilon \left(-df_{ij}+\langle  df_{ij},n_i\rangle n_i\right)\cdot E_{ij} \\
    &=\epsilon \left(-df_{ij}+\langle  df_{ij},n_i\rangle n_i\right) \cdot \left( \tan \frac{\theta_{ij}}{2} \lvert df_{ij} \rvert + df_{ij} \right)\\
    &=\epsilon \Bigg( \lvert df_{ij}\rvert^2-\langle df_{ij},n_i\rangle ^2 -\tan \frac{\theta_{ij}}{2}\lvert  df_{ij}\rvert df_{ij} +\tan \frac{\theta_{ij}}{2}\lvert df_{ij}\rvert \langle df_{ij},n_i\rangle n_i \\
    & + \langle df_{ij},n_i\rangle n_i \times df_{ij} \Bigg)
\end{align*}
where $\epsilon$ is some positive number. It follows that 
\[\mathop\mathrm{Re}(h_{ij})=\lvert df_{ij}\rvert^2-\langle df_{ij},n_i\rangle^2 =\lvert df_{ij}^i\rvert^2 >0 \]
and
\begin{align*}
&\langle \mathop\mathrm{Im}(h_{ij}) , n_i\rangle \\ &= \langle -\tan \frac{\theta_{ij}}{2}\lvert  df_{ij}\rvert df_{ij} +\tan \frac{\theta_{ij}}{2}\lvert df_{ij}\rvert\langle df_{ij},n_i\rangle n_i + \langle df_{ij},n_i\rangle n_i \times df_{ij},n_i\rangle \\
&=  -\tan \frac{\theta_{ij}}{2}\lvert  df_{ij}\rvert \langle df_{ij},n_i\rangle +\tan \frac{\theta_{ij}}{2}\lvert  df_{ij}\rvert \langle df_{ij},n_i\rangle \\
&=0.
\end{align*}
Note that $\mathop\mathrm{Im}(df_{ij}^i)\perp n_i$ and $ df_{ij}^i\cdot n_i\cdot(df_{ij}^i)^{-1}$ represents the transformation which rotates $n_i$ around the axis $\mathop\mathrm{Im}(df_{ij}^i)$ about $180$ degree, hence 
\[df_{ij}^i\cdot n_i\cdot(df_{ij}^i)^{-1}=-n_i\]
and therefore
\begin{align*}
h_{ij}^{-1}\cdot n_i \cdot h_{ij}  &= E_{ij}^{-1}\cdot df_{ij}^i \cdot n_i \cdot (df_{ij}^i)^{-1} \cdot E_{ij} \\
&= -E_{ij}\cdot n_i \cdot E_{ij} \\
&= n_j 
\end{align*}
which as well implies that
\[\mathop\mathrm{Im}(h_{ij}) \perp n_j\;.\]
\end{proof}

\begin{lemma}
Let $\gamma=(1,2,\cdots,n,1)$ be a loop. The modified spin multi-ratio can be written as
\[\hat{\mathrm{cr}}_\mathfrak{X}(\gamma)=\lvert \hat{\mathrm{cr}}_\mathfrak{X}(\gamma)\rvert (\mathfrak{e}_{12}\cdot \mathfrak{e}_{23}\cdot \cdots\cdot \mathfrak{e}_{n,1})\cdot (h_{12}\cdot \cdots \cdot h_{n,1})\]
where $\mathfrak{e}_{i,i+1}$ are pure imaginary quaternions such that $\mathfrak{e}_{i,i+1}\perp n_1$   and $n_1$ is the normal of the face $\gamma(1)$.  If $\mathfrak{X}$ is classical then 
\[\angle (\mathfrak{e}_{i-1,i},\mathfrak{e}_{i,i+1})=\angle (df_{i-1,i},df_{i,i+1}).\] 
\end{lemma}
\begin{proof}
Factorizing all the hyperedges $E_{ij}$ the spin multi-ratio becomes
\begin{align*}
\hat{\mathrm{cr}}_\mathfrak{X}(\gamma) =\;& E_{12}\cdot E_{23} \cdot \cdots \cdot E_{n,1}\\
=\;& df^1_{12}\cdot h_{12}\cdot df^2_{23} \cdot h_{23} \cdot \cdots \cdot df^n_{n,1}\cdot h_{n,1}\\
 =\;&df_{12}^1\cdot (h_{12}\cdot df^2_{23}\cdot h_{12}^{-1})\cdot (h_{12}h_{23}\cdot df_{34}^3 \cdot h_{23}^{-1}h_{12}^{-1})\cdot \cdots \\
                                       &\cdot (h_{12}h_{23}\cdot\cdots\cdot h_{n-1,n}\cdot df_{1,n}^n \cdot h_{n-1,n}^{-1}\cdot \cdots \cdot h_{23}^{-1}h_{12}^{-1})\\
  & \cdot(h_{12}\cdot h_{23}\cdot\cdots \cdot h_{n-1,n}h_{n,1}).
\end{align*}
Let 
\begin{equation}
\label{eqn:int_angle}
\mathfrak{e}_{i,i+1}=h_{12}\cdot \cdots\cdot h_{i-1,i}\cdot df_{i,i+1}\cdot h_{i-1,i}^{-1}\cdot \cdots \cdot h_{12}^{-1},
\end{equation}
then, by \eqref{eqn:h} we have $\mathfrak{e}_{i,i+1}\perp n_1$ and $\hat{\mathrm{cr}}(\gamma)$ has the form
\[\hat{\mathrm{cr}}_\mathfrak{X}(\gamma)=\lvert \hat{\mathrm{cr}}_\mathfrak{X}(\gamma)\rvert (\mathfrak{e}_{12}\cdot \mathfrak{e}_{23}\cdot \cdots\cdot \mathfrak{e}_{n,1})\cdot (h_{12}\cdot \cdots \cdot h_{n,1}).\]
If $\mathfrak{X}$ is classical, then $df^i_{ij}=df_{ij}$ and 
\[\angle(df_{i-1,i},df_{i,i+1})=\angle (df_{i-1,i},h_{i-1,i}\cdot df_{i,i+1}\cdot h_{i-1,i}^{-1})\]
because the axis of $h_{i-1,i}$ is parallel  to $df_{i,i-1}$. Applying the same rotation on $e_{i-1,i}$ and $h_{i-1,i}\cdot e_{i,i+1} \cdot h_{i-1,i}^{-1}$ we get
\[\angle (\mathfrak{e}_{i-1,i},\mathfrak{e}_{i,i+1})=\angle (df_{i-1,i},df_{i,i+1}).\] 
\end{proof}

Therefore, up to scaling, the spin multi-ratio can be written as the product of two factors: We call $ \mathfrak{e}_{12}\cdot \mathfrak{e}_{23}\cdot \ldots\cdot  \mathfrak{e}_{n,1}$ the {edge part and $ h_{12}\cdot h_{23}\cdot \ldots \cdot h_{n,1}$ the curvature part.
To understand the edge part we need the following lemma:
\begin{figure}[h]
\centering
\def\svgwidth{0.31\columnwidth}
\import{figs/}{quat_prod.pdf_tex}
\caption{The product of quaternions in $\qi\qj$-plane}\label{fig:quat_prod}
\end{figure}

\begin{lemma}
\label{lem:prod}
Suppose $n$ is an even number. Let $q_1=\cos \omega_1\qi+\sin\omega_1 \qj$ and \[q_i=\cos (\omega_1-\sum\limits_{i=2}^{n}\omega_i)\qi+\sin(\omega_1-\sum\limits_{i=2}^{n}\omega_i)\qj\] (see \cref{fig:quat_prod}). Then
\[q_1\cdot q_2 \cdot \ldots q_n =\begin{cases}\cos(\Phi)+\sin(\Phi)\qk & n=0 \mod 4 \\ -\cos(\Phi)-\sin(\Phi)\qk & n=2 \mod 4 \end{cases}\]
where $\Phi= \sum\limits_{i=1}^{n/2}\omega_{2i}$.
\end{lemma}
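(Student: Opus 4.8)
The plan is to exploit the fact that a product of two quaternions lying in the $\qi\qj$-plane lands in the plane $\spann\{1,\qk\}$, which is a commutative subalgebra isomorphic to $\mathbb{C}$, and then to pair up the $n$ factors.

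First I would record the elementary identity
\[
(\cos\alpha\,\qi+\sin\alpha\,\qj)\cdot(\cos\beta\,\qi+\sin\beta\,\qj)=-\bigl(\cos(\alpha-\beta)+\sin(\alpha-\beta)\,\qk\bigr),
\]
which follows at once from $\qi^2=\qj^2=-1$, $\qi\qj=\qk=-\qj\qi$ together with the angle-difference formulas. Writing $\psi_k$ for the $\qi\qj$-angle of $q_k$ (so $\psi_1=\omega_1$ and, by the definition in the statement, $\psi_k=\psi_{k-1}-\omega_k$, i.e. $\psi_{k-1}-\psi_k=\omega_k$), applying this identity to the consecutive pair $(q_{2k-1},q_{2k})$ yields
\[
q_{2k-1}\cdot q_{2k}=-\bigl(\cos\omega_{2k}+\sin\omega_{2k}\,\qk\bigr),\qquad k=1,\dots,n/2 .
\]
Here it is essential that $n$ is even, so that the $n$ factors split into $n/2$ such consecutive pairs, and that within each pair the order is ``odd index, then even index'', which is what produces $+\omega_{2k}$ rather than $-\omega_{2k}$.

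Next I would observe that each pair product lies in $\spann\{1,\qk\}$, and that $\cos\theta+\sin\theta\,\qk\mapsto e^{\qk\theta}$ identifies this subalgebra with $\mathbb{C}$; in particular the pair products all commute, so the order in which the pairs are composed is irrelevant, and
\[
q_1\cdot q_2\cdots q_n=\prod_{k=1}^{n/2}\Bigl(-\bigl(\cos\omega_{2k}+\sin\omega_{2k}\,\qk\bigr)\Bigr)=(-1)^{n/2}\bigl(\cos\Phi+\sin\Phi\,\qk\bigr),
\]
with $\Phi=\sum_{k=1}^{n/2}\omega_{2k}$ by the usual additivity of arguments under complex multiplication. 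Finally $(-1)^{n/2}=1$ when $n\equiv 0\pmod 4$ and $(-1)^{n/2}=-1$ when $n\equiv 2\pmod 4$, which is exactly the claimed case distinction.

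There is no serious obstacle here: the argument is a short direct computation. The only points that need care are (i) reading off the correct angle for each $q_k$ from the compressed definition in the statement, so that consecutive differences of these angles are precisely the $\omega_k$, and (ii) respecting the non-commutativity of the individual $q_k$ — it is the grouping into adjacent odd/even pairs, in that order, that both makes the remaining factors commute and pins down the sign of $\Phi$.
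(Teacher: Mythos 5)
Your proof is correct. The paper itself only sketches the argument (``prove the cases $n=2,4$ by direct computation and generalize by induction''), so your writeup is in effect a complete replacement: the two-factor identity $(\cos\alpha\,\qi+\sin\alpha\,\qj)(\cos\beta\,\qi+\sin\beta\,\qj)=-\bigl(\cos(\alpha-\beta)+\sin(\alpha-\beta)\,\qk\bigr)$ is exactly the $n=2$ base case that would also drive the paper's induction step, but your observation that every consecutive pair product lands in the commutative subalgebra $\spann\{1,\qk\}\cong\mathbb{C}$ lets you evaluate the entire product in closed form by additivity of arguments, with the sign $(-1)^{n/2}$ accounting transparently for the $n\equiv 0$ versus $n\equiv 2 \pmod 4$ dichotomy. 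This direct pairing argument buys a shorter, induction-free proof and makes explicit the two points the paper leaves implicit: that consecutive angle differences are precisely the $\omega_k$ (your $\psi_{k-1}-\psi_k=\omega_k$), and that the odd-then-even ordering inside each pair is what fixes the sign of $\sin\omega_{2k}$ and hence of $\Phi$.
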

We can prove the case that $n=2, 4$ by direct computation and generalize it by the induction.

Since $\mathfrak{e}_{i,i+1}$ are all coplanar, by \cref{lem:prod} we have:
\[\mathfrak{e}_{12}\cdot \mathfrak{e}_{23}\cdot \ldots\cdot  \mathfrak{e}_{n,1}=\pm( \cos(\Phi)+\sin(\Phi)\qk)\]
where $\Phi=\sum\limits^{n/2}_{i=1} \omega_{2i}$ and $\omega_i$ is the angle between the edges $\mathfrak{e}_{i-1,i}$ and $\mathfrak{e}_{i,i+1}$.
\subsubsection{The Argument of the Spin Multi-Ratio and the Angular Defect}
\label{subsec:angledefect}
\begin{figure}
\centering
\includegraphics[scale=0.2]{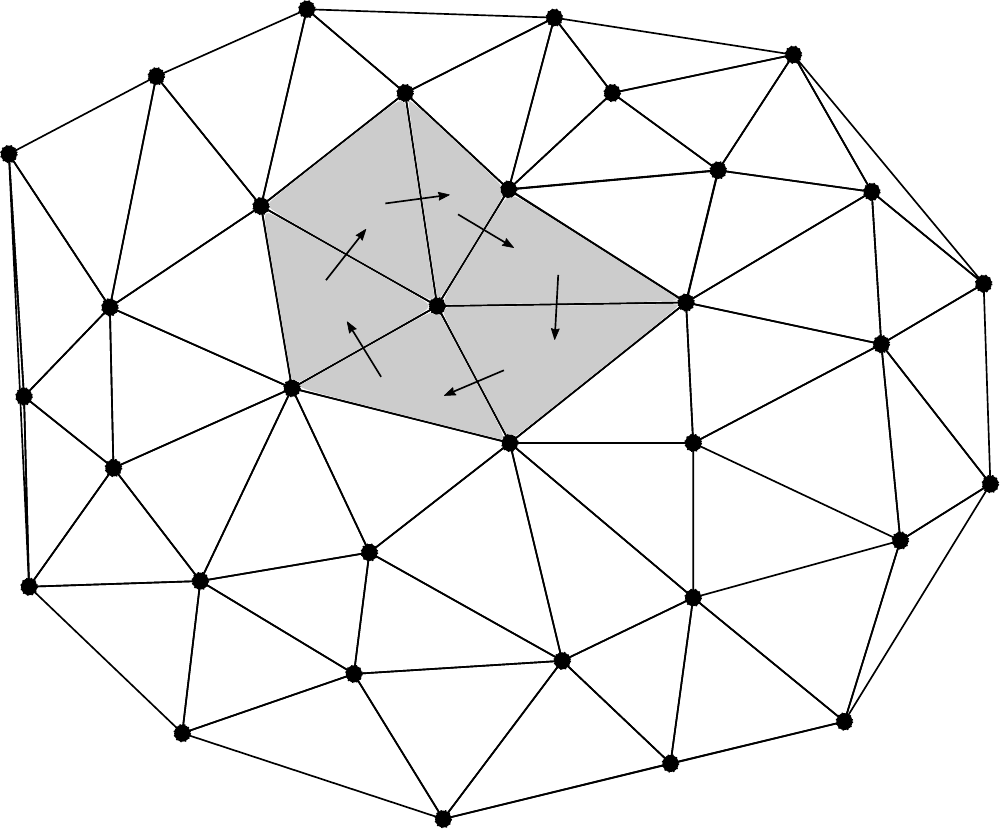}
\caption{A fundamental loop}\label{fig:floop}
\end{figure}
The angular defect around a vertex is known to be a polyhedral analog of Gaussian curvature and as such plays an important role in discrete differential geometry and we will show that it is closely related to the argument of the spin multi-ratio.

From now on we consider, for simplicity, a special set of loops which enclose only one vertex without duplicated dual edges. We call these loops fundamental(\cref{fig:floop}). The even fundamental loops are the fundamental loops enclosing a vertex with even degree. In the following $\mathrm{cr}(v)$ denotes the spin multi-ratio of the fundamental loop enclosing the vertex $v$. If no starting point of the fundamental loop is specified then $\mathrm{cr}(v)$ is well-defined up to conjugation in $\H$.\\ 
A vertex is called regular if and only if 
\begin{equation}
\label{def:regular}
\langle df_{i,i+1}\times df_{i-1,i} ,n_i\rangle>0
\end{equation}
holds for all incident edges. The angular defect of a regular vertex is defined by 
\[K(v)=2\pi-\sum\limits_{i=1}^{n} \omega_{i}\]
where $\omega_{i}$ is the angle between $\mathfrak{e}_{i-1,i}$ and $\mathfrak{e}_{i,i+1}$ defined in \eqref{eqn:int_angle}.
\begin{lemma}
Let $h_{i,i+1}$ be the quaternions satisfying the conditions \eqref{eqn:h}. Then 
\[h_{12}\cdot h_{23} \cdot \cdots \cdot h_{n,1}  = \cos\frac{K(v)}{2} + \sin\frac{K(v)}{2} n_1,\]
where $n_1$ is the normal of the first face $\gamma(1)$.
\end{lemma}
\begin{proof}
There are two unit quaternions, which differ by a sign, satisfying $h_{i,i+1}^{-1}\cdot n_{i} \cdot h_{i,i+1}=n_{i+1}$, so $h_{i,i+1}$ with positive real part is uniquely defined. Note that
\[h_{n,1}^{-1}\cdot \cdots \cdot h_{12}^{-1}\cdot n_1\cdot h_{12}\cdot \cdots \cdot h_{n,1}  =n_1\]
the axis of $h_{12}\cdot \cdots\cdot h_{n,1}$ is parallel to $n_1$ hence indeed \[h_{12}\cdot \cdots\cdot h_{n,1} \in \big\{a+b\cdot n_1\,|\,a,b\in \mathbb{R},a^2+b^2=1\big\}.\] 
If we cut along the edge $e_{n,1}$, fix the face $\Delta_1$ and unfold the faces along the path, then it gives a planar pattern, where the original edge $e_{n,1}$ incident to face $\Delta_1$ is denoted by $e_{n,1}^{1}$ and the edge $e_{n,1}$ incident to $\Delta_n$ is denoted by $e_{n,1}^{n}$. It follows that
\[h_{n,1}^{-1}\cdot \cdots \cdot h_{12}^{-1}\cdot e_{n,1}^1\cdot h_{12}\cdot \cdots \cdot h_{n,1}  =e_{n,1}^n\]
and hence
\[h_{12}\cdot \cdots \cdot h_{n,1}=\pm\left(\cos\frac{K(v)}{2} + \sin\frac{K(v)}{2} n_1\right).\]
To see that it indeed gives the right sign, observe that any pattern of vertex star can be deformed continuously to a planar pattern. Moreover we can always continously increase the angular defect while it's negative and decrease it while it's positive until $K=0$. During the deformation the value $h_{12}\cdot \cdots \cdot h_{n,1}$ changes  continuously until it becomes $1$ and it will never go through the value $-1$. Therefore we only have to check the sign for the planar pattern and the sign  of the other cases will be determined accordingly. In fact, the planar vertex star has $K =0$, and all $ h_{ij}$ would be just $1$. Hence we have 
\[h_{12}\cdot \cdots \cdot h_{n,1}=1.\]
\end{proof}
\begin{remark}
We can take the following example to visualize the map  
\begin{align*}
(-2\pi,2\pi)&\rightarrow \big\{a+b\cdot n_1\,|\,a,b\in \mathbb{R},a^2+b^2=1\big\},\\
\theta &\mapsto \cos \frac{\theta}{2} + \sin \frac{\theta}{2} n_1.
\end{align*}
 
\begin{figure}[h]
\centering
\begin{minipage}{0.4\textwidth}
\centering
\def\svgwidth{1\columnwidth}
\begingroup%
  \makeatletter%
  \providecommand\color[2][]{%
    \errmessage{(Inkscape) Color is used for the text in Inkscape, but the package 'color.sty' is not loaded}%
    \renewcommand\color[2][]{}%
  }%
  \providecommand\transparent[1]{%
    \errmessage{(Inkscape) Transparency is used (non-zero) for the text in Inkscape, but the package 'transparent.sty' is not loaded}%
    \renewcommand\transparent[1]{}%
  }%
  \providecommand\rotatebox[2]{#2}%
  \newcommand*\fsize{\dimexpr\f@size pt\relax}%
  \newcommand*\lineheight[1]{\fontsize{\fsize}{#1\fsize}\selectfont}%
  \ifx\svgwidth\undefined%
    \setlength{\unitlength}{416.19293745bp}%
    \ifx\svgscale\undefined%
      \relax%
    \else%
      \setlength{\unitlength}{\unitlength * \real{\svgscale}}%
    \fi%
  \else%
    \setlength{\unitlength}{\svgwidth}%
  \fi%
  \global\let\svgwidth\undefined%
  \global\let\svgscale\undefined%
  \makeatother%
  \begin{picture}(1,1.00394876)%
    \lineheight{1}%
    \setlength\tabcolsep{0pt}%
    \put(0,0){\includegraphics[width=\unitlength,page=1]{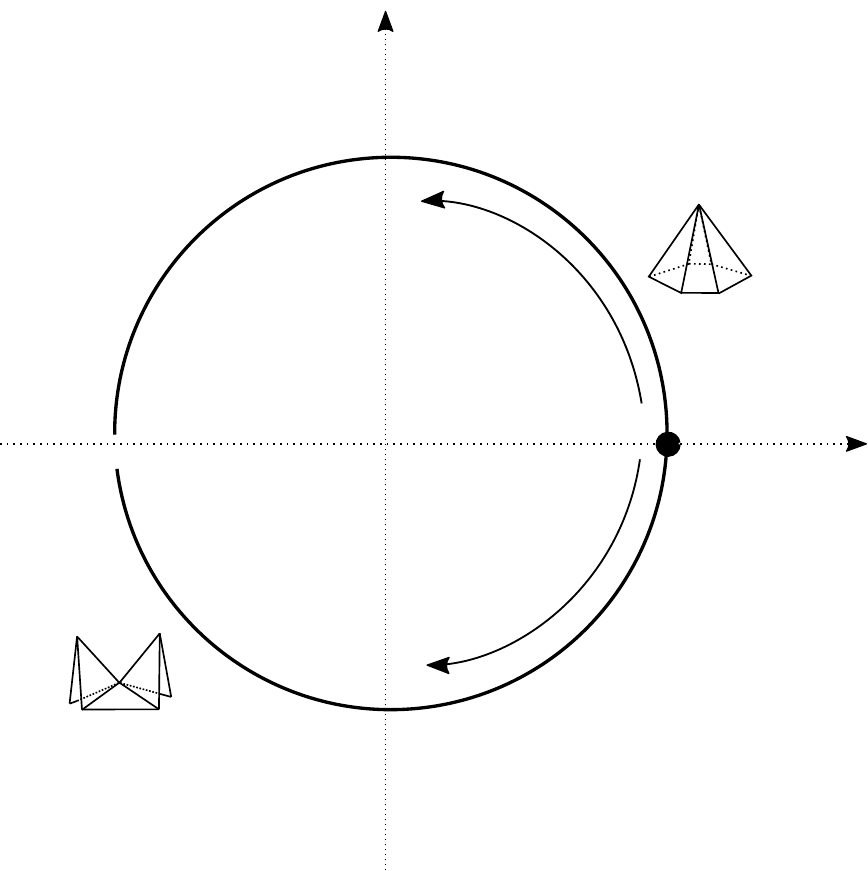}}%
    \put(0.78549205,0.60531102){\color[rgb]{0,0,0}\makebox(0,0)[lt]{\lineheight{1.25}\smash{\begin{tabular}[t]{l}\small $S_1$\end{tabular}}}}%
    \put(0.11031965,0.20680748){\color[rgb]{0,0,0}\makebox(0,0)[lt]{\lineheight{1.25}\smash{\begin{tabular}[t]{l}\small\\ $S_2$\end{tabular}}}}%
    \put(0.46183776,0.98569363){\color[rgb]{0,0,0}\makebox(0,0)[lt]{\lineheight{1.25}\smash{\begin{tabular}[t]{l}$n_1$\end{tabular}}}}%
    \put(0.79238788,0.50659279){\color[rgb]{0,0,0}\makebox(0,0)[lt]{\lineheight{1.25}\smash{\begin{tabular}[t]{l}$1$\end{tabular}}}}%
    \put(0.60811776,0.65218291){\color[rgb]{0,0,0}\makebox(0,0)[lt]{\lineheight{1.25}\smash{\begin{tabular}[t]{l}$+$\end{tabular}}}}%
    \put(0.6133491,0.34950969){\color[rgb]{0,0,0}\makebox(0,0)[lt]{\lineheight{1.25}\smash{\begin{tabular}[t]{l}$-$\end{tabular}}}}%
    \put(0.16473622,0.51347923){\color[rgb]{0,0,0}\makebox(0,0)[lt]{\lineheight{1.25}\smash{\begin{tabular}[t]{l}$-1$\end{tabular}}}}%
  \end{picture}%
\endgroup%

\end{minipage}%
\begin{minipage}{0.4\textwidth}
\centering
\def\svgwidth{1\columnwidth}
\import{figs/}{angledefect.pdf_tex}\vspace{1cm}
\def\svgwidth{1\columnwidth}
\import{figs/}{angledefect2.pdf_tex}
\end{minipage}
\caption{A sketch of the map $K(v)\mapsto \cos \frac{K(v)}{2}+ \sin\frac{K(v)}{2}n_1$.}\label{fig:angledefect}
\end{figure}
Assuming that two vertex stars $S_1$ and $S_2$ in \cref{fig:angledefect} have the same rotation angle between $e_{n,1}^1$ and $e_{n,1}^n$, we can  determine their positions up to the antipodal points on the circle. Observe that $S_1$  can be deformed to the planar vertex star without going through any pattern with angular defect $\pm \pi$, which are corresponding to the points $\pm n_1$ on the circle. Hence $S_1$ should sit in the first quadrant. By the analogous argument $S_2$ should sit in the third quadrant. 
\end{remark}
As a result the the argument of the spin multi-ratio can be characterized as follows:
\begin{theorem}
Suppose $\gamma = (1,2,\cdots,n,1)$ is a loop of even length. The spin multi-ratio can be written as
\[\frac{\mathrm{cr}_{\mathfrak{X}}(\gamma)}{\lvert \mathrm{cr}_{\mathfrak{X}}(\gamma)\rvert} = \pm\Big(\cos \frac{\Phi}{2}+\sin \frac{\Phi}{2} n_1\Big)\]
where $\Phi=K(v)+2\cdot \sum\limits_{i=1}^{n/2}\omega_{2i}$ and $n_1$ is the normal of the face $\gamma(1)$.
\end{theorem}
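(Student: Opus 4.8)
\emph{Proof strategy.} The plan is to combine three facts already established. Set $\sigma:=\sum_{i=1}^{n/2}\omega_{2i}$, so that the claim reads $\mathrm{cr}_{\mathfrak{X}}(\gamma)/\lvert\mathrm{cr}_{\mathfrak{X}}(\gamma)\rvert=\pm(\cos\tfrac{\Phi}{2}+\sin\tfrac{\Phi}{2}\,n_1)$ with $\Phi=\kappa(v)+2\sigma$. From the factorization theorem for the modified spin multi-ratio above we have
\[\frac{\hat{\mathrm{cr}}_{\mathfrak{X}}(\gamma)}{\lvert\hat{\mathrm{cr}}_{\mathfrak{X}}(\gamma)\rvert}=\frac{\mathfrak{e}_{12}\cdot\mathfrak{e}_{23}\cdots\mathfrak{e}_{n,1}}{\lvert\mathfrak{e}_{12}\cdot\mathfrak{e}_{23}\cdots\mathfrak{e}_{n,1}\rvert}\cdot\bigl(h_{12}\cdot h_{23}\cdots h_{n,1}\bigr),\]
with all $\mathfrak{e}_{i,i+1}\perp n_1$ and the $h_{i,i+1}$ satisfying \eqref{eqn:h}. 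So the whole task reduces to identifying the two factors on the right and multiplying them together.

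For the edge factor I would quote the computation recorded right after \cref{lem:prod}: since the $\mathfrak{e}_{i,i+1}$ all lie in the plane $n_1^{\perp}$, applying \cref{lem:prod} after the fixed rotation carrying the $\qi\qj$-plane to $n_1^{\perp}$ and $\qk$ to $\pm n_1$ gives
\[\frac{\mathfrak{e}_{12}\cdot\mathfrak{e}_{23}\cdots\mathfrak{e}_{n,1}}{\lvert\mathfrak{e}_{12}\cdot\mathfrak{e}_{23}\cdots\mathfrak{e}_{n,1}\rvert}=\pm\bigl(\cos\sigma+\sin\sigma\,n_1\bigr).\]
For the curvature factor, the theorem $h_{12}\cdot h_{23}\cdots h_{n,1}=\mathrm{i}_{n_1}(\kappa(v))$ proved above gives $h_{12}\cdot h_{23}\cdots h_{n,1}=\cos\tfrac{\kappa(v)}{2}+\sin\tfrac{\kappa(v)}{2}\,n_1$.

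Now both factors lie in $Q_{n_1}$, which is a commutative subalgebra of $\H$ isomorphic to $\mathbb{C}$ via $a+b\,n_1\mapsto a+b\,i$ (recall $n_1^{2}=-1$). Multiplying them with the ordinary addition formulas for $\sin$ and $\cos$ gives $\pm\bigl(\cos(\sigma+\tfrac{\kappa(v)}{2})+\sin(\sigma+\tfrac{\kappa(v)}{2})\,n_1\bigr)$, and $\sigma+\tfrac{\kappa(v)}{2}=\tfrac12(2\sigma+\kappa(v))=\tfrac{\Phi}{2}$. Finally, since $\hat{\mathrm{cr}}_{\mathfrak{X}}(\gamma)$ differs from $\mathrm{cr}_{\mathfrak{X}}(\gamma)$ only by a real scalar factor, the two normalized quaternions agree up to sign, which yields the stated formula. (Consistently, \cref{pro:normal} already tells us that the axis is $n_1$, so only the rotation angle was ever in question.)

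The only place that needs care is the sign bookkeeping: \cref{lem:prod} carries a $\pm$ according to $n\bmod 4$, there is a further sign choice in identifying the $\qi\qj$-plane with $n_1^{\perp}$ (whether $\qk\mapsto n_1$ or $-n_1$), and the scalar relating $\hat{\mathrm{cr}}$ to $\mathrm{cr}$ is a priori of either sign. All of these collapse into the single $\pm$ of the statement, so no separate argument is needed; the substantive inputs — the edge/curvature factorization of $\hat{\mathrm{cr}}_{\mathfrak{X}}(\gamma)$ and the angular-defect identity $h_{12}\cdots h_{n,1}=\mathrm{i}_{n_1}(\kappa(v))$ — are both already proved, and what remains is a single multiplication in $\mathbb{C}\cong Q_{n_1}$.
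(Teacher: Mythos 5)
Your proposal is correct and follows exactly the route the paper intends: the theorem is stated as a direct consequence ("As a result\dots") of the factorization $\hat{\mathrm{cr}}_\mathfrak{X}(\gamma)$ into edge and curvature parts, the coplanar-product lemma giving $\pm(\cos\sigma+\sin\sigma\,n_1)$ for the edge part, and the identity $h_{12}\cdots h_{n,1}=\mathrm{i}_{n_1}(\kappa(v))$, combined by commutative multiplication in $Q_{n_1}$ and the observation that $\overline{E_{ij}}^{-1}=E_{ij}/\lvert E_{ij}\rvert^2$ makes $\mathrm{cr}$ and $\hat{\mathrm{cr}}$ agree up to a real scalar. The sign bookkeeping you defer is indeed absorbed by the $\pm$ in the statement, so nothing is missing.
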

\begin{remark}
The argument of a vertex star with angular defect $K$ is the sum of the angles for the shaded regions in \cref{fig:argument}.\\
Since $K = 2\pi -\sum\limits_{i=1}^{n} \omega_i$, we can rewrite the argument as the alternating sum of the angles $\omega_i$:
\[\Phi = 2\pi +\sum_{i=1}^n(-1)^i \omega_i\; .\]
\end{remark}
\begin{figure}[h]
\centering
\def\svgwidth{0.5\columnwidth}
\import{figs/}{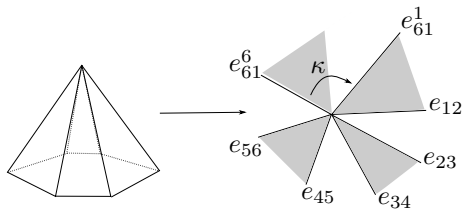}
\caption{The argument of the spin multi-ratio.}\label{fig:argument}
\end{figure}
\subsection{Spin equivalence}
We are now able to show, that the spin multi-ratio determines the net up to spin transformations.
\begin{definition}
Given two face-edge-constraint nets $\mathfrak{X}$ and $\mathfrak{X}^\prime$ if there exists a spinor $\phi$ with $\lvert \phi_i\rvert \neq 0$ for all $i$ such that 
\[\mathrm{s}_\phi(\mathfrak{X})=\mathfrak{X}^\prime,\]
where $s_\phi$ is the spin transformation introduced in \cref{def:spin_tr},
then we say that $\mathfrak{X}$ and $\mathfrak{X}^\prime$ are spin equivalent.
\end{definition}
\begin{theorem}
Given two face-edge-constraint nets $\mathfrak{X}$ and $\mathfrak{X}^\prime$, if 
$\mathrm{cr}_\mathfrak{X}(\gamma)$ and $\mathrm{cr}_{\mathfrak{X}^\prime}(\gamma)$
have the same argument and norm for all $\gamma\in \mathcal{O}^{even}_i$ then they are spin equivalent. Moreover, if all the vertices in $X$ have even degree then there are a family of the spinor $\phi_\lambda$, parametrized by $S^1$, giving the spin transformation between $\mathfrak{X}$ and $\mathfrak{X}^\prime$. If there exists at least one vertex with odd degree then the spinor is unique. 
\end{theorem}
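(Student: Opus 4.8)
The plan is to turn spin equivalence into a system of local quaternionic equations for $\phi$, integrate it along a spanning tree of the dual graph, and show that the hypothesis on the multi-ratios is exactly the obstruction to closing the integration up. First, $\mathrm{s}_\phi(\mathfrak{X})=\mathfrak{X}^\prime$ amounts to the two families of conditions $\overline{\phi_i}\cdot E_{ij}\cdot\phi_j=E^\prime_{ij}$ over all adjacent faces, and $\phi_i^{-1}\cdot n_i\cdot\phi_i=n^\prime_i$ over all faces; the second family may be imposed at a single base face $\Delta_1$ only, because once the edge equations hold everywhere \cref{pro:preserve} gives $E^\prime_{ij}{}^{-1}\mathrm{s}_\phi(n_i)E^\prime_{ij}=-\mathrm{s}_\phi(n_j)$, which against the edge constraint $E^\prime_{ij}{}^{-1}n^\prime_iE^\prime_{ij}=-n^\prime_j$ of $\mathfrak{X}^\prime$ propagates $\mathrm{s}_\phi(n_k)=n^\prime_k$ over the whole connected net. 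The edge equation at $e_{ij}$ is the transport rule $\phi_j=E_{ij}^{-1}\,\overline{\phi_i}^{-1}\,E^\prime_{ij}$, and using $E_{ji}=\overline{E_{ij}}$, $E^\prime_{ji}=\overline{E^\prime_{ij}}$ one checks that transporting out and back is the identity.

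Fix $\Delta_1$, a spanning tree of the dual $1$-skeleton, and a value $\phi_1$, and propagate $\phi$ along the tree; then the edge equations hold on all tree edges by construction, and all that remains is that the holonomy $T_\gamma(\phi_1)=\phi_1$ around every fundamental loop $\gamma$ at $\Delta_1$. Telescoping the transport around $\gamma$ (the interior factors $\overline{\phi_k}^{-1}\overline{\phi_k}$ and $\phi_k\phi_k^{-1}$ cancel in pairs) yields $T_\gamma(\phi_1)=\mathrm{cr}_\mathfrak{X}(\gamma)^{-1}\cdot\phi_1\cdot\mathrm{cr}_{\mathfrak{X}^\prime}(\gamma)$ when $\lvert\gamma\rvert$ is even and $T_\gamma(\phi_1)=\overline{\mathrm{cr}_\mathfrak{X}(\gamma)}\cdot\overline{\phi_1}^{-1}\cdot\overline{\mathrm{cr}_{\mathfrak{X}^\prime}(\gamma)}^{-1}$ when $\lvert\gamma\rvert$ is odd, so for an even loop $T_\gamma(\phi_1)=\phi_1$ reads $\phi_1^{-1}\mathrm{cr}_\mathfrak{X}(\gamma)\phi_1=\mathrm{cr}_{\mathfrak{X}^\prime}(\gamma)$, in agreement with the transformation law for the multi-ratio. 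Since $\{\gamma:T_\gamma(\phi_1)=\phi_1\}$ is a subgroup of $\mathcal{O}_1$, it suffices to kill the holonomy on a generating set.

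Now choose $\phi_1$. By \cref{pro:normal} every even-loop $\mathrm{cr}_\mathfrak{X}(\gamma)$ lies in $\spann_\R\{1,n_1\}$ and every $\mathrm{cr}_{\mathfrak{X}^\prime}(\gamma)$ in $\spann_\R\{1,n_1^\prime\}$; by hypothesis they share norm and argument, so — fixing the signs of the $n_1$- and $n_1^\prime$-components via \cref{pro:normal} — we may write $\mathrm{cr}_\mathfrak{X}(\gamma)=a+b\,n_1$ and $\mathrm{cr}_{\mathfrak{X}^\prime}(\gamma)=a+b\,n_1^\prime$ with the same $a,b\in\R$. Then $\phi_1^{-1}\mathrm{cr}_\mathfrak{X}(\gamma)\phi_1=\mathrm{cr}_{\mathfrak{X}^\prime}(\gamma)$, simultaneously for every even $\gamma$, is equivalent to the single equation $\phi_1^{-1}n_1\phi_1=n_1^\prime$, which is also the normal condition at $\Delta_1$. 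When every vertex of $X$ has even degree there are no odd fundamental loops, $\mathcal{O}_1=\mathcal{O}_1^{even}$, and this completes existence: the solution set of $\phi_1^{-1}n_1\phi_1=n_1^\prime$ is $\{\rho(\cos\lambda+\sin\lambda\,n_1)q_0:\rho>0,\ \lambda\in S^1\}$ for any fixed unit $q_0$ with $q_0^{-1}n_1q_0=n_1^\prime$, and since the rescaling $\rho$ is precisely the bipartite rescaling of $\phi$ that leaves $\mathrm{s}_\phi(\mathfrak{X})$ unchanged, the essential family of spinors is a circle.

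If instead some vertex has odd degree, fix one odd fundamental loop $\gamma_0$; by \cref{pro:normal}, $\mathrm{cr}_\mathfrak{X}(\gamma_0)$ and $\mathrm{cr}_{\mathfrak{X}^\prime}(\gamma_0)$ are purely imaginary and orthogonal to $n_1$ resp.\ $n_1^\prime$, and (since $\phi_1^{-1}(\cdot)\phi_1$ is a genuine rotation) $T_{\gamma_0}(\phi_1)=\phi_1$ rearranges to $\phi_1^{-1}\,\mathrm{cr}_\mathfrak{X}(\gamma_0)\,\phi_1=\lvert\phi_1\rvert^2\,\mathrm{cr}_{\mathfrak{X}^\prime}(\gamma_0)$. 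Comparing norms fixes $\lvert\phi_1\rvert^2=\lvert\mathrm{cr}_\mathfrak{X}(\gamma_0)\rvert/\lvert\mathrm{cr}_{\mathfrak{X}^\prime}(\gamma_0)\rvert$, and comparing directions forces $x\mapsto\phi_1^{-1}x\phi_1$ to carry the orthonormal pair $\big(n_1,\tfrac{\mathrm{cr}_\mathfrak{X}(\gamma_0)}{\lvert\mathrm{cr}_\mathfrak{X}(\gamma_0)\rvert}\big)$ to $\big(n_1^\prime,\tfrac{\mathrm{cr}_{\mathfrak{X}^\prime}(\gamma_0)}{\lvert\mathrm{cr}_{\mathfrak{X}^\prime}(\gamma_0)\rvert}\big)$, which pins the rotation (hence $\phi_1$ up to a harmless sign, as $\phi_1$ and $-\phi_1$ induce the same transformation) and subsumes the normal condition of the previous paragraph; a second odd fundamental loop adds nothing, differing from $\gamma_0$ by an even loop already handled, so the spinor is unique. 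The main obstacle is exactly the claim that a single $\phi_1$ conjugates \emph{all} even-loop multi-ratios correctly at once: this works only because \cref{pro:normal} pins them to the common axis $n_1$ (resp.\ $n_1^\prime$), collapsing the whole family to one rotation $n_1\mapsto n_1^\prime$, and it requires reading ``same argument'' as equality of the \emph{signed} $n_1$-components rather than of $\lvert b\rvert$; a secondary delicate point is checking, in the odd case, that the orthonormal-frame prescription for $x\mapsto\phi_1^{-1}x\phi_1$ is orientation-compatible with the even-loop conditions, which is where the regularity/orientation conventions on vertex stars are used.
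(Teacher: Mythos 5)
Your proposal is correct and follows essentially the same route as the paper: the transport rule $\phi_j=E_{ij}^{-1}\,\overline{\phi_i}^{-1}E'_{ij}$, consistency reduced via \cref{pro:normal} and the even-loop hypothesis to the single condition $\phi_1^{-1}n_1\phi_1=n_1'$, plus an extra odd-loop condition fixing $\lvert\phi_1\rvert$ and the rotation when an odd-degree vertex exists; your spanning-tree/holonomy packaging is just the paper's path-independence argument in different words. As a minor bonus, your odd-loop closure condition $\phi_1^{-1}\,\mathrm{cr}_{\mathfrak{X}}(\gamma_0)\,\overline{\phi_1}^{-1}=\mathrm{cr}_{\mathfrak{X}'}(\gamma_0)$ is the form consistent with the paper's own transformation law for the spin multi-ratio (the paper's proof writes $\overline{\phi_i}\cdot\mathrm{cr}_{\mathfrak{X}}(\gamma_o)\cdot\phi_i=\mathrm{cr}_{\mathfrak{X}'}(\gamma_o)$, which inverts the scale factor), and your explicit treatment of the bipartite rescaling and the $\pm\phi$ ambiguity makes the $S^1$-family and uniqueness claims cleaner.
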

\begin{proof}
First consider the case with only even degree vertices, then all the loops have even length. Choose the $\phi_i$ such that $n^\prime_i=\phi_i^{-1}\cdot n_i\cdot \phi_i$. Note that all the possible choices form a $S^1$-parametrized set. Now we want to determine the value at the face $j$. First take a path from $i$ to $j$
\[\gamma=(i=1,2,\cdots,n=j)\]
and by induction let 
\begin{equation}
\label{eqn:induction}
\phi_{m+1}=E^{-1}_{m,m+1}\cdot \overline{\phi_{m}^{-1}}\cdot E_{m,m+1}^\prime
\end{equation}
for $m=1,\cdots, n-1$. Now, we just need to check that the value of $\phi$ is independent on the choice of path. Suppose $\gamma_1$ and $\gamma_2$ , with $\lvert \gamma_1\rvert=m_1$ and $\lvert \gamma_2\rvert =m_2$ are two paths connecting $i$ and $j$. Label the in-between vertices by:
\[\gamma_1=(i=\gamma_1(1),\gamma_1(2),\cdots,\gamma_1(m_1)=j)\]
and
\[\gamma_2=(i=\gamma_2(1),\gamma_2(2),\cdots, ,\cdots,\gamma_2(m_2)=j).\]
Since $\lvert \gamma_1\rvert+\lvert \gamma_2\rvert$ is even, $\lvert \gamma_1\rvert$ and $\lvert \gamma_2\rvert $ are either both even or both odd. Suppose that they are both even, then computing the value of $\phi_j$ along the path $\gamma_1$ we obtain that
\[\phi_j^\prime=E^{-1}_{\gamma_1(m_1-1),\gamma_1(m_1)}\cdot \cdots \cdot \overline{E_{\gamma_1(1),\gamma_2(2)}}\cdot \phi_i\cdot \overline{E^\prime_{\gamma_1(1),\gamma_1(2)}}^{-1}\cdots\cdot E^\prime_{\gamma_1(m_1-1),\gamma_1(m_1)}. \]
Then, computing the value of $\phi_j$ along the path $\gamma_2$ we find
\[\phi_j^{\prime\prime}=E^{-1}_{\gamma_2(m_2-1),\gamma_2(m_2)}\cdot \cdots \cdot \overline{E_{\gamma_2(1),\gamma_2(2)}}\cdot \phi_i\cdot \overline{E^\prime_{\gamma_2(1),\gamma_2(2)}}^{-1}\cdots\cdot E^\prime_{\gamma_2(m_2-1),\gamma_2(m_2)}. \]
Note that $\gamma_1\cdot \gamma_2^{-1}$ forms an even loop, so $\mathrm{cr}_\mathfrak{X}(\gamma_1\cdot\gamma_2^{-1})$ and $\mathrm{cr}_{\mathfrak{X}^\prime}(\gamma_1\cdot \gamma_2^{-1})$ have the same argument and norm. Besides, the axis of $\mathrm{\mathfrak{X}}(\gamma_1\cdot \gamma_2^{-1})$ is parallel to $n_i$ and the axis of $\mathrm{cr}_{\mathfrak{X}^\prime}(\gamma_1\cdot \gamma_2^{-1})$ is parallel to $n^\prime_i$. Therefore we have
\[\phi_i^{-1}\cdot \mathrm{cr}_{\mathfrak{X}}(\gamma_1\cdot \gamma_2^{-1})\cdot \phi_i =\mathrm{cr}_{\mathfrak{X}^\prime}(\gamma_1\cdot \gamma_2^{-1})\]
where by definition
\[\mathrm{cr}_{\mathfrak{X}}(\gamma_1\cdot \gamma_2^{-1})=\overline{E_{\gamma_1(1),\gamma_1(2)}}^{-1}\cdot E_{\gamma_1(2),\gamma_1(3)}\cdot \cdots \cdot E_{\gamma_2(2),\gamma_2(1)}\]
and
\[\mathrm{cr}_{\mathfrak{X}^\prime}(\gamma_1\cdot \gamma_2^{-1})=\overline{E^\prime_{\gamma_1(1),\gamma_1(2)}}^{-1}\cdot E^\prime_{\gamma_1(2),\gamma_1(3)}\cdot \cdots \cdot E^\prime_{\gamma_2(2),\gamma_2(1)}.\]
It then follows that
\begin{align*}
    \phi_i^{\prime\prime} &= E^{-1}_{\gamma_2(m_2-1),\gamma_2(m_2)}\cdot \cdots \cdot \overline{E}_{\gamma_2(1),\gamma_2(2)}\cdot \mathrm{cr}^{-1}_\mathfrak{X}(\gamma_1\cdot \gamma_2^{-1})\cdot  \phi_i \\
    &\cdot \mathrm{cr}_{\mathfrak{X}^\prime}(\gamma_1\cdot \gamma_2^{-1})\cdot  \overline{E^\prime}_{\gamma_2(1),\gamma_2(2)}^{-1}\cdots\cdot E^\prime_{\gamma_2(m_2-1),\gamma_2(m_2)}\\
    &= \phi_i^\prime.
\end{align*}
The argument is analogous for the case of $\lvert \gamma_1\rvert$ and $\lvert \gamma_2\rvert$ both being odd. \\
If there exists an odd loop $\gamma_o\in \mathcal{O}_i$, then we can first determine all the values of $\phi$ lying on the loop $\gamma_o$ by \eqref{eqn:induction}. Since $\mathrm{cr}_\mathfrak{X}(\gamma_o)$ and $\mathrm{cr}_{\mathfrak{X}^\prime}(\gamma_o)$ are both pure imaginary and perpendicular to $n_i$ and $n_i^\prime$ respectively, there is a unique $\phi_i$ satisfying the following conditions:
\begin{align*}
    \phi_i^{-1}\cdot n_i \cdot \phi_i &=n_i^\prime,\\
    \overline{\phi_i}\cdot \mathrm{cr}_\mathfrak{X}(\gamma_o) \cdot \phi_i &=\mathrm{cr}_{\mathfrak{X}^\prime}(\gamma_o).
\end{align*}
Fixing this $\phi_i$, the values of the other $\phi$ on the loop $\phi_o$ are then all compatibly determined. \\
To determine the values of $\phi$ on the other vertices $j$ away from $\gamma_o$ we just need to again take some path between $i$ and $j$, if the path has even length, we are done. Otherwise we can precompose the path with $\gamma_o$ and obtain a even path. It remains to determine the values of $\phi$ on this path by \eqref{eqn:induction}.
\end{proof}

\section{The smooth intrinsic Dirac operator}
\label{sec:relation}
In this section we are going to describe the exact connection between the extrinsic and intrinsic Dirac operators (for a more detailed treatment of spin structures and Dirac operators see \cite{lawson_spin_1990}). The notation $\Gamma(P)$ stands for the space of sections of some fiber bundle $P$.

Again we start with the smooth setup: Suppose $X$ is an oriented surface and $f:X\rightarrow \R^3$ is an immersion. Let $Cl_3\rightarrow \R^3$ be the trivial Clifford bundle over $\R^3$ and let $\mathcal{S}_{\mathbb{R}^3}\rightarrow \R^3$ be the corresponding trivial spinor bundle. These two bundles both can be pulled back to $X$ through the map $f$: $Cl_3|_X=f^*(Cl_3)$ and $\mathcal{S}_{\mathbb{R}^3}|_X=f^*(\mathcal{S}_{\mathbb{R}^3})$. Furthermore since there is a natural identification $Cl_2\hookrightarrow Cl_3^{even}$ by $v\mapsto n\cdot v$ where $n$ is the normal of $X$ in $\R^3$, we can define the Clifford representation
\begin{align}
\label{eqn:mul_id}    \rho: Cl_2 & \rightarrow \mathrm{End}(\mathcal{S}) \\
    v & \mapsto \rho_3(n\cdot v)\nonumber
\end{align}
where $\rho_3$ is the Clifford representation of $Cl_3$.\\
Suppose $\phi$ is a section of the spinor bundle, i.e., $\phi \in \Gamma(\mathcal{S}_X)$,  the Dirac operator is
\begin{align*}
    D: \Gamma(\mathcal{S}_X)  & \rightarrow \Gamma(\mathcal{S}_X)\\
    \phi & \mapsto \rho(e_1)\cdot \nabla_{e_1}\phi + \rho(e_2)\cdot \nabla_{e_2}\phi
\end{align*}
where $\{e_1,e_2\}$ is an oriented orthonormal frame of $X$ and $\nabla$ is the spin connection of $X$.\\
Let $c\in \Gamma(P_{\mathrm{Spin}}(\mathbb{R}^3))$ be a global parallel section of the spin bundle. Since
\begin{itemize}
\item The intrinsic spinor bundle $\mathcal{S}_X$ can be identified with the trivial ambient spinor bundle $\mathcal{S}_{\mathbb{R}^3}$ by \eqref{eqn:mul_id}.
\item Any section of the spinor bundle $\mathcal{S}_{\mathbb{R}^3}$ can be represented by a pair $(c,\phi_c)$, where $\phi_c\in C^\infty(\mathbb{R}^3,\mathbb{H})$, because $\mathcal{S}_{\mathbb{R}^3}$ is defined as an associated bundle $\mathcal{S}_{\mathbb{R}^3} := P_{\mathrm{Spin}}(\mathbb{R}^3) \times_\sim \mathbb{H}$, where $\sim$ is given in \eqref{eqn:equiv}.
\end{itemize}
$c$ induces an isomorphism:
\begin{align*}
\mathfrak{c}: \Gamma(\mathcal{S}_X) &\cong \Gamma_X(\mathcal{S}_{\mathbb{R}^3})  \rightarrow C^\infty(X,\H)\\
\phi & \mapsto \quad (c,\phi_c)\mapsto \phi_c.
\end{align*}
\begin{theorem}
\label{thm:relation}
Let $f:X\hookrightarrow \R^3$ be an isometric surface immersion. Then we have
\[\mathfrak{c} \circ (D-H) \circ \mathfrak{c}^{-1}=D_f,\]
where $D_f$ is the extrinsic Dirac operator \eqref{eqn:ex_dirac} and $H$ is the mean curvature of $f$.
\end{theorem}
\begin{proof}
 Note that the covariant derivative of the ambient space and its hypersurface differ by a second fundamental form (see \cite{hijazi_dirac_2001})
\begin{align*}
\nabla_X Y &= \tilde{\nabla}_XY-\langle \tilde{\nabla}_X Y,n\rangle n\\
&= \tilde{\nabla}_XY+\langle Y,\tilde{\nabla}_Xn\rangle n\\
&= \tilde{\nabla}_X Y- \mathrm{II}(X,Y)n
\end{align*}
and the corresponding spinor connection satisfies \begin{align*}
\nabla_{X} \phi =\tilde{\nabla}_X \phi -\frac{1}{2} \mathrm{II}(e_1,X)e_1\cdot n\cdot \phi-\frac{1}{2}\mathrm{II}(e_2,X)e_2\cdot n\cdot \phi\; .
\end{align*}
It yields
\begin{align}
D\phi &= \rho(e_1)\cdot \nabla_{e_1} \phi+\rho(e_2)\cdot \nabla_{e_2} \nonumber\\
&= \rho_{3}(n)\cdot \rho_{3}(e_1)\cdot \Bigg(\tilde{\nabla}_{e_1}  \phi-\frac{1}{2}\mathrm{II}(e_1,e_1)\rho_3(e_1)\cdot \rho_3(n)\cdot \phi \nonumber \\
&-\frac{1}{2}\mathrm{II}(e_1,e_2)\rho_3(e_2)\cdot \rho_3(n)\cdot \phi \Bigg) +\rho_{3}(n) \cdot\rho_{3}(e_2)\cdot  \Bigg(\tilde{\nabla}_{e_2}\phi \label{eqn:dirac1}\\
&-\frac{1}{2}\mathrm{II}(e_2,e_1)\rho_3(e_1)\cdot \rho_3(n)\cdot \phi -\frac{1}{2}\mathrm{II}(e_2,e_2)\rho_3(e_2)\cdot \rho_3(n)\cdot \phi \Bigg) \nonumber\\
&=\rho_3(N)\cdot \rho_3(e_1)\cdot \tilde{\nabla}_{e_1}\phi+\rho_3(n)\cdot\rho_3(e_2)\cdot  \tilde{\nabla}_{e_2}\phi+H\phi\nonumber
\end{align}
where $\tilde{\nabla}$ is the Levi-Civita connection of $\R^3$.

Now let us take the global parallel frame $c$ with the following identifications
\[e_1 \mapsto \dif f(e_1),\quad e_2\mapsto \dif f(e_2),\quad n\mapsto N \]
where $\dif f(e_1)$, $\dif f(e_2)$, and $N$ are imaginary quaternions.  Since $c$ is parallel, the covariant derivative reduces to the partial derivative $\partial$. Hence \eqref{eqn:dirac1} becomes:
\begin{align}
    \mathfrak{c}\circ (D-H)\circ  \mathfrak{c}^{-1} &=N \cdot  \dif f (e_1) \cdot \partial_{e_1}  +N \cdot \dif f (e_2) \cdot \partial_{e_2} \nonumber \\
    \label{eqn:dirac2}
    &=  \dif f(e_2) \partial_{e_1} - \dif f (e_1)  \partial_{e_2} .
\end{align}
On the other hand we have (see \cite{chubelaschwili_variational_2016} for more details)
\begin{align}
    D_f  &= -\frac{\dif f \wedge \dif }{\lvert \dif f\rvert^2} \nonumber\\
    &= -\frac{(\dif f(e_1) e_1^* +\dif f(e_2)e_2^*)\wedge (e_1^*\partial_{e_1}  + e_2^*\partial_{e_2} )}{\lvert \dif f \rvert^2} \nonumber\\
    &=-\frac{\left(\dif f(e_1)\partial_{e_2}-\dif f(e_2)\partial_{e_1}\right) e_1^*\wedge e_2^*}{\lvert \dif f \rvert^2} \nonumber \\
    \label{eqn:dirac3}
     &= -\dif f(e_1)\partial_{e_2}+\dif f(e_2)\partial_{e_1} .
\end{align}
Comparing \eqref{eqn:dirac2} with \eqref{eqn:dirac3} we finally find
\[\mathfrak{c}\circ (D-H) \circ \mathfrak{c}^{-1} =D_f\]
\end{proof}
\section{A Discretization of the intrinsic Dirac operator}
\label{sec:intrinsic}
Next, we aim to find a discrete version of the above relation. We start with
\subsection{A Discrete principal bundle}
Following the ideas from \cite{bobenko_complex_2016}  we construct the discrete principal bundle by the connection between neighbouring faces.
\begin{definition}
Let $X$ be an oriented net. We call $(P,X,G,\eta)$ a discrete principal bundle with connection if %
\begin{enumerate}
\item 
  each face $\Delta_i$ is assigned with a manifold $P_i$ with a right action, free and transitive, by a Lie group $G$.
\item $P=\{P_i\}$ is a collection of the manifolds $P_i$.
\item each oriented dual edge $\vec{ij}$ is endowed with a connection $\eta_{ij}:P_i \rightarrow P_j$ such that $\eta_{ij}(p\cdot g)= \eta_{ij}(p)\cdot g$ and  $\eta_{ji}\circ \eta_{ij}=\mathrm{Id}$.
\end{enumerate}
\end{definition}

Integrating the connections along the fundamental loop around a vertex $v$ we obtain the holonomy $\Omega_p^v\in G$:
 \[p\cdot \Omega^{v}_p :=\eta_{n,1}\circ\ldots\circ\eta_{23}\circ \eta_{12}(p)\]
  It is easy to see that $\Omega^{v}_{pg}=\mathrm{Ad}_{g^{-1}}\Omega^{v}_p$, hence the holonomy of the same fibre all lie in the same conjugate class. \\
We know that the spin group $\mathrm{Spin}(n)$ is a two-fold covering of $\mathrm{SO}(n)$, namely the following short exact sequence holds:
\[
0\rightarrow \mathbb{Z}_2 \rightarrow \mathrm{Spin}(n)\xrightarrow{\xi_0} \mathrm{SO}(n)\rightarrow  0\]
where $\xi_0$ is the adjoint representation.
Given a $\mathrm{SO}(n)$-principal bundle \[(P_{SO},X,\mathrm{SO}(n),\eta),\] a lifting is a $\mathrm{Spin}(n)$-principal bundle $(P_{Spin},X,\mathrm{Spin}(n),\tilde{\eta})$ together with a set of maps $\xi_i: P_{Spin}^i\rightarrow P_{SO}^i$ which are compatible with the connections, i.e. the following diagram commutes at each dual edge $\vec{ij}$:
\[\begin{tikzcd}
P_{Spin}^i\arrow{r}{\eta^{ij}} \arrow{d}{\xi} & P_{Spin}^j \arrow{d}{\xi}\\
P_{SO}^i\arrow{r}{\tilde{\eta}^{ij}} & P_{SO}^j
\end{tikzcd}\]
If $n=2$, then since $\mathrm{SO}(2)$ and $\mathrm{Spin}(2)$ are both abelian groups, the holonomy of the loop is well-defined without specifying an point $p$ in the fibre.

\subsection{Discrete associated bundle and Clifford multiplication}
\begin{definition}
We consider a principal $G$-bundle $P_G$ and a vector space $W$ with the left action by $G$. Take a product space $P_G\times W$ modulo the relation $\sim$:
\begin{equation}
\label{eqn:equiv}
(p,v) \sim (pg^{-1},gv)
\end{equation}
We call $P_G\times_\sim W$ the associated bundle to $P_G$. The connection on the associated bundle is 
\[(p,v)_i\mapsto (\eta_{ij}(p),v)_j.\]
\end{definition}
Since 
\[\begin{tikzcd}
(p,v)_i \arrow[mapsto]{r} \arrow{d}{\sim}& (\eta_{ij}(p),v)_j \arrow{d}{\sim}\\
(p\cdot g^{-1},gv) \arrow[mapsto]{r} & (\eta_{ij}( p) \cdot g^{-1}, gv)
\end{tikzcd}\]commutes, the connection is well-defined on the associated bundle. 
In order to define the Clifford multiplication on bundle level we need to check the covariance. Let $S$ denote the irreducible Clifford module. Since there is a bundle isomorphism $P_{SO}\times W\cong P_{\spin}\times_{\mathrm{Ad}} W$, the Clifford multiplication can be defined as follows 

\begin{align*}
  \left(P_{\spin} \times_{\mathrm{Ad}} W \right)  \times \left(P_{\spin}\times S\right)  &\rightarrow %
   P_{\spin}\times S \\ 
  (p,v)  \times (p,x) &\mapsto%
   (p,v\cdot x)
\end{align*}
If we change $p$ to $pg^{-1}$, it yields \[(pg^{-1},gvg^{-1})\cdot (pg^{-1},gx)=(pg^{-1},gv\cdot x)=(p,vx).\]
Hence, the multiplication is independent of the choice of $p$. It is also easy to see that the Clifford multiplication is compatible with the connection, i.e. $\eta_{ij}(v)\cdot \tilde{\eta}_{ij}(x)=\tilde{\eta}_{ij}(v\cdot x)$, or
\[
\begin{tikzcd}
(p,v)_i\times(p,x)_i \arrow{d} \arrow{r}{\tilde{\eta}_{ij}} & (\tilde{\eta}_{ij}(p),v)_j\times (\tilde{\eta}_{ij}(p),x)_j \arrow{d}\\
(p,v\cdot x)_i\arrow{r}{\tilde{\eta}_{ij}} & (\tilde{\eta}_{ij}(p),v\cdot x)_j
\end{tikzcd}\]
commutes.
\subsection{The Discrete Dirac operator}
In order to introduce a discrete version of the spinor connection, which is necessary for the intrinsic Dirac operator, we propose the following setting of discrete intrinsic nets, which mimics smooth surfaces with Riemannian metric. In the end we will show that the discrete intrinsic Dirac operator arising from this setting couples with discrete extrinsic Dirac operator introduced in \cref{sec:extrinsic} very well. Therefore they form a consistent framework together with the face edge-contraint net setting in \cref{sec:extrinsic}. The notion of discrete spinor connection is compatible with the one in the recent work \cite{chern_2018}, which is used for shape embedding problems}.

\begin{definition}
\label{def:intrinsic_net}
An intrinsic net is an oriented net such that each face $\Delta_i$ is endowed with an Euclidean affine plane $\mathrm{Affine}(\Delta_i)$ and every oriented edge $e_{ij}$ in $\Delta_i$ is identified with a tangent vector (a vector attached to a point) in $\mathrm{Affine}(\Delta_i)$, denoted by $e_{ij}^i$ such that 
\begin{itemize}
\item the common edge is identified with the same length in the neighbouring faces, i.e., $\lvert e_{ij}^i \rvert =\lvert e_{ij}^j\rvert$,
\item for each face $\Delta_i$ the extension lines of the tangent vectors $e_{ij}^i$ form a convex polygon with counterclockwise orientation.
\end{itemize}
\end{definition}
\begin{figure}[h]
\centering
\def\svgwidth{0.5\columnwidth}
\import{figs/}{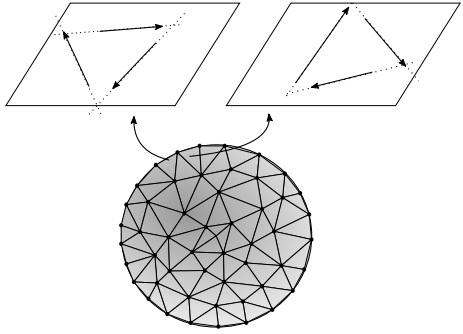}
\caption{The intrinsic net}\label{fig:intrinsicnet}
\end{figure}
\begin{remark}
The the edges in a face do {\em not} need to form a closed polygon. However, it makes sense to define the angle between any pair of edges in a face by taking the angle between their extension lines (see \cref{fig:intrinsicnet}).
\end{remark}

\begin{definition}
  An oriented orthonormal frame of a face $\Delta_i$ is an oriented affine isometric map 
  \[p^i: \R^2\rightarrow \mathrm{Affine}(\Delta_i).\]
\end{definition}
Let $p_1^i:=p^i(\begin{pmatrix} 1 \\0\end{pmatrix})$ and $p_2^i:=p^i(\begin{pmatrix} 0 \\ 1 \end{pmatrix})$.
Given a frame at $\Delta_i$, the vector $e_{ij}^i$ can be represented by a linear combination of that frame, denoted by $p^i(e_{ij}^i)$ or $\mathfrak{e}_{ij}^i$. 
\begin{definition}
Suppose $X$ is an intrinsic net. An orthonormal frame bundle with Levi-Civita-connection $P^{LC}_{\mathrm{SO}}\rightarrow X$ is a $\mathrm{SO}(2)$-bundle consisting of all the orthonormal frames at each face $\Delta_i$ satisfying $\big(\eta_{ij}(p^i)\big)(e_{ij}^j)=p^i(e_{ij}^i)$.
\end{definition}
Now one can take any lift of the principal bundle with Levi-Civita-connection $P_{\mathrm{Spin}}^{LC}\rightarrow P^{LC}_{\mathrm{SO}}$. Then the tangent bundle can be constructed by
\[\mathrm{T}X:= P_\mathrm{Spin}^{LC}\times_\mathrm{Ad} \R^2\]
and the spinor bundle can be constructed by
\[\mathcal{S} =P_{\mathrm{Spin}}^{LC}\times_\mathrm{L} S\]
where $S\cong\H$ is the irreducible Clifford module of $\mathrm{Spin}(2)$ and $\mathrm{L}$ denotes the left action of $\mathrm{Spin}(2)$ on $S$. Note that there is an isomorphism 
\begin{align*}
\mathrm{T}X_i &\xrightarrow{\cong} \mathrm{Affine}(\Delta_i), \\
(e,v) &\mapsto e(v).
\end{align*}
Therefore the Clifford multiplication is defined by
\begin{align*}
    \mathrm{Affine}(\Delta_i) \times \mathcal{S}_i &\rightarrow \mathcal{S}_i \\
    (e,v) \times (e,x)  &\mapsto (e,v\cdot x)
\end{align*}
and with this we are finally able to formulate a discrete intrinsic Dirac operator as follows:
\begin{definition}[Discrete Dirac operator]
Given an intrinsic net $X$ and the principal bundle $P_{\spin}\rightarrow P_{\so}$ over $X$. The Dirac operator $D$ is a map $\Gamma(\mathcal{S}) \rightarrow \Gamma(\mathcal{S})$, where $\Gamma(\mathcal{S})$ is the sections of $\mathcal{S}$, defined as follows:

\[D(\phi)_{i} = \frac{1}{2}\sum\limits_{j} e_{ij}\cdot \tilde{\eta}_{ji}(\phi_j).\]
\end{definition}
Note, that there is a well-defined Hermitian product
\begin{align*}
    \Gamma(\mathcal{S}) \times \Gamma(\mathcal{S}) & \rightarrow \mathcal{H}, \\
    \langle (p,x_1), (p,x_2)\rangle  & =\overline{x_1}\cdot x_2\; .
\end{align*}
\begin{theorem}
Any $\phi$ satisfying the Dirac equation
\[D\phi=\bm{\rho} \phi\]
where $\bm{\rho}: F \rightarrow \R$ is a real-valued function, gives rise to a face-edge-constraint net by:
\begin{align*}
E_{ij}& = \langle \phi_i, e_{ij}\cdot \tilde{\eta}_{ji}(\phi_j)\rangle, \\
n_i &= \frac{1}{\lvert \phi_i\rvert^2}\langle \phi_i, \qk \cdot \phi_i\rangle.
\end{align*}
\end{theorem}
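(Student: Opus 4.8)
The plan is to show that, provided $\phi$ vanishes nowhere (otherwise $n_i$ is undefined and every hyperedge at that face is $0$), the data $E_{ij}$ and $n_i$ produced from a solution of $D\phi=\bm{\rho}\phi$ are precisely the hyperedge and normal data of a face edge-constraint net. I would split this into four claims: (1) each $n_i$ lies on $\mathbb{S}^2$; (2) $E_{ij}=\overline{E_{ji}}$, equivalently $\mathrm{Re}(E_{ij})=\mathrm{Re}(E_{ji})$ and $e_{ij}:=\mathrm{Im}(E_{ij})=-e_{ji}$; (3) $E_{ij}^{-1}\cdot n_i\cdot E_{ij}=-n_j$; and (4) $\sum_j\mathrm{Im}(E_{ij})=0$ around every face $\Delta_i$. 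Claim (3) does double duty: since conjugation by $E_{ij}=\mathrm{Re}(E_{ij})+e_{ij}$ is a rotation about the axis $e_{ij}$, the vector $n_i+n_j=n_i-E_{ij}^{-1}n_iE_{ij}$ is automatically orthogonal to $e_{ij}$, which is exactly \eqref{eqn:edgeconstraint}; and once $\lvert n_i\rvert=\lvert n_j\rvert=1$ and $n_i+n_j\perp e_{ij}$ are known, part (2) of the lemma in \cref{sec:preliminaries} --- applied as in the proof of \cref{eqn:edgeconstraint2} --- forces $\mathrm{Re}(E_{ij})=\lvert e_{ij}\rvert\tan\tfrac{\theta_{ij}}{2}=\mathbf{H}_{ij}$, so that $E_{ij}$ really is the hyperedge. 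Claims (2) and (4) give $e_{ij}=-e_{ji}$ and closedness of the imaginary parts around each face, so these vectors integrate (globally if $X$ is simply connected, as in \cref{sec:extrinsic}) to a vertex map $f:V\to\R^3$, and then $(X,f,n)$ is the asserted net.

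To establish (1)--(4) I would trivialize everything in a local spin frame. Fix a frame $p_i$ at each face, let $x_i\in\H$ be the coordinate of $\phi_i$ in that frame, and let $u_{ij}\in\spin(2)$ be the connection coefficients, $\tilde{\eta}_{ij}(p_i)=p_j\cdot u_{ij}$, so $u_{ji}=u_{ij}^{-1}$. In the standard model $S\cong\H$ with $Cl_2$ acting by left multiplication, $\R^2\cong\spann\{\qi,\qj\}$ and $\spin(2)=\{\cos\alpha+\sin\alpha\,\qk\}\subset\spann\{1,\qk\}$, the Hermitian product is $\langle(p,x),(p,y)\rangle=\overline{x}\,y$, Clifford multiplication by $\qk$ on $\mathcal{S}_i$ is well defined (since $\spin(2)$ commutes with $\qk$), and one reads off
\[
E_{ij}=\overline{x_i}\cdot\mathfrak{e}^i_{ij}\cdot u_{ji}\cdot x_j,\qquad n_i=\frac{1}{\lvert x_i\rvert^2}\,\overline{x_i}\cdot\qk\cdot x_i=x_i^{-1}\cdot\qk\cdot x_i,
\]
where $\mathfrak{e}^i_{ij}\in\spann\{\qi,\qj\}$ is $e^i_{ij}$ expressed in the frame $p_i$. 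Claim (1) is then immediate: $n_i$ is a conjugate of the unit imaginary quaternion $\qk$, hence a unit imaginary quaternion.

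For (3) I would compute directly. In $E_{ij}^{-1}\cdot n_i\cdot E_{ij}$ the factors $x_i$ cancel because $x_i\overline{x_i}=\lvert x_i\rvert^2$ is a central scalar, leaving $x_j^{-1}u_{ji}^{-1}(\mathfrak{e}^i_{ij})^{-1}\qk\,\mathfrak{e}^i_{ij}u_{ji}x_j$; since $\mathfrak{e}^i_{ij}$ is imaginary and orthogonal to $\qk$ it anticommutes with $\qk$, so $(\mathfrak{e}^i_{ij})^{-1}\qk\,\mathfrak{e}^i_{ij}=-\qk$, and since $u_{ji}\in\spann\{1,\qk\}$ commutes with $\qk$ the expression collapses to $-x_j^{-1}\qk\,x_j=-n_j$. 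For (4) I observe that in the frame $p_i$ the Dirac equation $D(\phi)_i=\bm{\rho}_i\phi_i$ becomes $\sum_j\mathfrak{e}^i_{ij}\cdot u_{ji}\cdot x_j=\bm{\rho}_i\,x_i$ in $\H$, whence
\[
\sum_j E_{ij}=\overline{x_i}\cdot\Big(\sum_j\mathfrak{e}^i_{ij}\,u_{ji}\,x_j\Big)=\bm{\rho}_i\,\lvert x_i\rvert^2\in\R,
\]
so the imaginary parts of the hyperedges around $\Delta_i$ sum to zero; this is the only point where the Dirac equation enters, and it runs parallel to the closedness argument of \cref{sec:extrinsic}.

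The step I expect to require genuine care is (2), since it is the only place the Levi-Civita compatibility of $\tilde{\eta}$ is used, and the conventions --- the direction of the lift $u_{ij}\mapsto R_{ij}$ and its interaction with $e^i_{ij}=-e^j_{ji}$ --- must be threaded carefully. Writing $R_{ij}\in\so(2)$ for the coordinate rotation relating the frames $p_i$ and $\tilde{\eta}_{ij}(p_i)$ along the shared edge, the compatibility condition $\big(\tilde{\eta}_{ij}(p^i)\big)(e^j_{ij})=p^i(e^i_{ij})$ says $\mathfrak{e}^i_{ij}=R_{ij}\mathfrak{e}^j_{ij}$, with $R_{ij}$ realized by conjugation with the spin lift $u_{ij}$. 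Using $\overline{u_{ji}}=u_{ij}$ and $\overline{\mathfrak{e}^i_{ij}}=-\mathfrak{e}^i_{ij}$ one then gets
\[
\overline{E_{ij}}=\overline{x_j}\cdot u_{ij}\cdot(-\mathfrak{e}^i_{ij})\cdot x_i=\overline{x_j}\cdot\big(u_{ij}(-\mathfrak{e}^i_{ij})u_{ij}^{-1}\big)\cdot u_{ij}\cdot x_i=\overline{x_j}\cdot\mathfrak{e}^j_{ji}\cdot u_{ij}\cdot x_i=E_{ji},
\]
where $u_{ij}(-\mathfrak{e}^i_{ij})u_{ij}^{-1}=-R_{ij}^{-1}(\mathfrak{e}^i_{ij})=-\mathfrak{e}^j_{ij}=\mathfrak{e}^j_{ji}$. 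Apart from this sign-bookkeeping, (1), (3) and (4) are one-line quaternion identities once the trivialization is fixed, and assembling them as in the first paragraph finishes the proof.
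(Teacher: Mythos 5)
Your proposal is correct and follows essentially the same route as the paper: your claim (4) is exactly the paper's stated proof (the Dirac equation makes $\sum_j E_{ij}=\bm{\rho}_i\lvert\phi_i\rvert^2$ real, so the imaginary parts close around each face), while your claims (1)--(3) reproduce, in the same local spin-frame trivialization $E_{ij}=\overline{\phi_i}\cdot\mathfrak{e}^i_{ij}\cdot g_{ij}\cdot\phi_j$, $n_i=\phi_i^{-1}\cdot\qk\cdot\phi_i$, the checks $E_{ij}=\overline{E_{ji}}$ and $E_{ij}^{-1}\cdot n_i\cdot E_{ij}=-n_j$ that the paper carries out in the subsequent subsection on the explicit construction. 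The only difference is organizational: you assemble all verifications (plus the nonvanishing of $\phi$, the identification $\mathrm{Re}(E_{ij})=\mathbf{H}_{ij}$, and the integration of the edge vectors to a vertex map) into a single argument, which the paper distributes between the theorem's short proof and the following subsection.
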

\begin{proof}Compute
\begin{align*}
\sum\limits_j E_{ij} &=\sum\limits_j\langle \phi_i,e_{ij}\cdot \tilde{\eta}_{ji}(\phi_j)\rangle \\
&=\langle \phi_i,2(D\phi)_i\rangle\\
&=2\langle \phi_i,\bm{\rho} \phi_i\rangle\\
&=2\bm{\rho} \lvert \phi\rvert^2
\end{align*}
which is a real number.
\end{proof}
We will call these a face-edge-constraint realization of the underlying intrinsic net with respect to the spinor $\phi$.
\subsection{Explicit construction of the intrinsic Dirac operator and face-edge-constraint realizations}
Now let us derive an explicit formula for the Dirac equation as well as the face-edge-constraint realizations. We begin by choosing an orthonormal frame $p_i=(p_1^i,p_2^i)$ at each face.\\
Let $g_{ij}\in \mathrm{Spin}(2)$ be defined by $p_i\cdot g_{ij}=\tilde{\eta}_{ji}( p_j)$. Since $\tilde{\eta}_{ij}\circ\tilde{\eta}_{ji}=\mathrm{Id}$, we have $g_{ij}=g_{ji}^{-1}$. Then we take an isometric embedding of the the affine plane $\mathrm{Affine}(\Delta_i)$ and $\mathrm{Affine}(\Delta_j)$ into $\qi$-$\qj$-plane such that
\begin{enumerate}
\item the common edge $e_{ij}^i$ and $e_{ij}^j$ coincide in this embedding.
\item $p_1^i$ is mapped to $\qi$ and $p_2^i$ is mapped to $\qj$.
\end{enumerate}
Now, every vector in these two affine planes can be identified with a quaternion in the $\qi$-$\qj$-plane by:
\[v=xp_1^i+yp_2^j \mapsto x\qi+y\qj.\]
In particular
\begin{align*}
    p_1^j &\mapsto c_{11} \qi+ c_{12} \qj, \\
    p_2^j &\mapsto c_{21} \qi + c_{22} \qj.
\end{align*}
We can find a quaternion $g_{ji}$ such that 
\begin{align*}
    c_{11}\qi + c_{12}\qj &= g_{ij} \qi g_{ij}^{-1}, \\
    c_{21}\qi + c_{22}\qj &= g_{ij} \qj g_{ij}^{-1}.
\end{align*}
In fact $g_{ij}$ is uniquely defined up to a sign, which represent different liftings of the connection. We will see in the next section that the choice of the lifting actually determines the spin multi-ratio. \\
The parallel transport from a neighbouring face $\Delta_j$ is:
\begin{align*}
\tilde{\eta}_{ji}((p_j,\phi_j)) &=(p_i\cdot g_{ij},\phi_j)\\
&=(p_i,g_{ij}\cdot \phi_j).
\end{align*}

In $\mathrm{Affine}(\Delta_i)$ we can write
\[e_{ij}^i=x p^i_1+yp^i_2\mapsto \mathfrak{e}_{ij}^i=x \qi+y\qj.\]
Therefore, the Dirac operator becomes
\begin{align*}
D(\phi)_i &= \frac{1}{2}\sum\limits_j \mathfrak{e}_{ij}^i\cdot g_{ij}\cdot \phi_j
\end{align*}
and  in the local frame $p_i$ the Dirac equation has the form
\[\frac{1}{2}\sum\limits_j \mathfrak{e}_{ij}^i\cdot g_{ij}\cdot \phi_j=\bm{\rho}_i \phi_i.\]
Moreover, a face-edge-constraint realization is given by the explicit formula
\begin{align}
\label{eqn:edgemap}
E_{ij} & =\overline{\phi_i}\cdot \mathfrak{e}_{ij}^i\cdot g_{ij}\cdot \phi_j, \\
n_i &= \phi_i^{-1} \cdot \qk \cdot \phi_i.
\end{align}
To see that this realization is well-defined, we first compute
\begin{align}
\label{ean:edgemap2}
E_{ji} &=\overline{\phi_j}\cdot  \mathfrak{e}_{ji}^j\cdot g_{ji}\cdot  \phi_i.\nonumber
\end{align}
Note that $\mathfrak{e}_{ji}^j=-\mathfrak{e}_{ij}^j=-g_{ij}^{-1}\cdot \mathfrak{e}_{ij}^i \cdot g_{ij}$ and $g_{ij}=g_{ji}^{-1}$. This implies
\begin{align} E_{ji} &= -\overline{\phi_j} \cdot g^{-1}_{ij} \mathfrak{e}_{ij}^i \cdot g_{ij} \cdot g_{ji}\cdot \phi_i \\
&= -\overline{\phi_j}\cdot g_{ij}^{-1}\cdot  \mathfrak{e}_{ij}^i\cdot \phi_i
\end{align}
and by $\overline{g_{ij}^{-1}}=g_{ij}$ we obtain $E_{ij}=\overline{E_{ji}}$. Finally we need to show that
\[E_{ij}^{-1}\cdot n_i \cdot  E_{ij} = - n_j.\]
By direct computation we see
\begin{align}
E_{ij}^{-1}\cdot n_i \cdot E_{ij} & = \left(\overline{\phi_i}\cdot \mathfrak{e}_{ij}^i\cdot g_{ij}\cdot \phi_j\right)^{-1} \cdot \phi^{-1}_i\cdot \qk \cdot \phi_i\cdot \overline{\phi_i}\cdot \mathfrak{e}_{ij}^i\cdot g_{ij}\phi_j \\
&= \phi_j^{-1} g_{ij}^{-1} \cdot (-\mathfrak{e}_{ij}^i) \cdot \qk \cdot \mathfrak{e}_{ij}^i\cdot g_{ij}\cdot \phi_j.
\end{align}
Since $\mathfrak{e}_{ij}^i$ lies in the $\qi$-$\qj$-plane,
\[-\mathfrak{e}_{ij}^i\cdot \qk \cdot \mathfrak{e}_{ij}^i=-\qk\]
and $g_{ij}$ has the axis parallel to $\qk$, so  $g_{ij}^{-1}\cdot \qk \cdot g_{ij}=\qk$, it follows that
\[E_{ij}^{-1}\cdot n_i\cdot E_{ij} = -\phi_{j}^{-1}\cdot \qk \cdot \phi_j = -n_j.\]
\subsection{The Preferred Choice for the Lifting}
We know that in an intrinsic net each edge admits two liftings with opposite sign, hence an intrinsic net with $n$ edges have $2^n$ different spinor connections. Now we are going to show that among all these spinor connections there are some more reasonable ones, called the preferred liftings, which correspond to the spinor structures in the smooth case.\\
Similar to \cref{def:regular} we call a vertex in an intrinsic net regular if and only if
\[ \langle \mathfrak{e}_{i-1,i}^i \times \mathfrak{e}_{i,i+1}^i, \qk\rangle >0.\]
Let $v$ be a regular vertex with even degree and $\mathfrak{X}$ be the face-edge-constraint realization of $(X,\mathcal{A})$ with respect to the spinor $\phi$, then
\begin{align*}
  \mathrm{cr}_\mathfrak{X}(v) =\; & \overline{E_{12}}^{-1}\cdot E_{23} \cdots \overline{E_{n-1,n}}^{-1}\cdot E_{n,1} \\
  =\; &\phi_{1}^{-1}\mathfrak{e}_{12}^1\cdot g_{12}\frac{\phi_2}{\lvert \phi_2\rvert^2} \cdot \overline{\phi_2} \cdot \mathfrak{e}_{23}^2 \cdot g_{23}\cdot \phi_3 \cdots\\
  &\cdots \phi_{n-1}^{-1} \mathfrak{e}_{n-1,n}^{n-1}\cdot g_{n-1,n}\frac{\phi_{n}}{\lvert \phi_n\rvert^2} \cdot \overline{\phi_{n}}\cdot  \mathfrak{e}_{n,1}^n\cdot g_{n,1}  \cdot \phi_1\\
=\; & \phi_1^{-1}\cdot \mathfrak{e}_{12}^{1}\cdot g_{12}\cdot \mathfrak{e}_{23}^2\cdot g_{23}\cdots \mathfrak{e}_{n-1,n}^{n-1}\cdot g_{n-1,n}\cdot \mathfrak{e}_{n,1}^n\cdot  g_{n,1}\cdot \phi_1 \\
 =\; & \phi_1^{-1} \cdot  \mathfrak{e}_{12}^1 \cdot  (g_{12}\mathfrak{e}_{23}^2g_{12}^{-1}) \cdot g_{12}\cdot g_{23} \cdot \mathfrak{e}_{34}^3 \cdots \mathfrak{e}_{n-1,n}^{n-1}\cdot g_{n-1,n}\cdot \mathfrak{e}_{n,1}^n\cdot g_{n,1}\phi_{1} \\
  =\; &\phi_1^{-1} \cdot \mathfrak{e}_{12}^1\cdot (g_{12}\mathfrak{e}_{23}^2g_{12}^{-1}) \cdot  (g_{12}g_{23}\mathfrak{e}_{34}^3 g_{23}^{-1}g_{12}^{-1})\cdots\\
  &\cdots (g_{12}\ldots g_{n-1,n} \mathfrak{e}_{n,1}^ng_{n-1,n}^{-1}\cdots g_{12}^{-1})\cdot (g_{12}\cdot \cdots\cdot g_{n,1})\cdot \phi_1 \\
=\; &\phi_1^{-1}\cdot \mathfrak{e}_{12}^1\cdot \mathfrak{e}_{23}^1\cdot \cdots \mathfrak{e}_{n-1,n}^1\cdot \mathfrak{e}_{n,1}^1\cdot(g_{12}\cdot \cdots \cdot g_{n,1}) \cdot \phi_1\;.
\end{align*}
We call $\mathfrak{X}=(X,f,n)$ a classical realization of $(X,\mathcal{A})$ if and only if $\mathfrak{X}$ is classical and all the internal angles are preserved:
\[\angle (df_{i-1,i},df_{i,i+1})=\angle (\mathfrak{e}_{i-1,i}^i,\mathfrak{e}_{i,i+1}^i).\]
For a classical realization, observe that $e_{12}^1\cdot e_{23}^1\cdot \cdots e_{n-1,n}^1\cdot e_{n,1}^1$ actually coincides with the edge part of the spin multi-ratio of the classical realization. Hence $g_{12}\cdot \cdots \cdot g_{n,1}$ should coincide with the curvature part of the spin multi-ratio.
\begin{definition}
Let $(X,\mathcal{A})$ be an intrinsic net with only regular vertices. A choice of lifting is called a preferred lifting if 
\[g_{12}\cdot \cdots\cdot g_{n,1}=
\cos\frac{K(v)}{2} + \sin\frac{K(v)}{2} \qk\]
holds for all vertices.
\end{definition}

\begin{lemma}[A Gauss-Bonnet theorem for intrinsic nets]
\label{lem:gaussbonet}
Let $(X,\mathcal{A})$ be an intrinsic net (\cref{def:intrinsic_net}). Suppose the total angular defect $\sum_i K(v)$ is the sum of the angular defects of all the vertices. Then we have
\[\sum\limits_{vertices} K(v)=2\pi\chi\]
where $\chi$ is the Euler characteristic. 
\end{lemma}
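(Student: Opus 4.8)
The plan is to reduce the statement to the classical polyhedral Gauss--Bonnet identity: the only genuinely geometric input will be that, for each face, the corner angles $\omega$ sum to $(k_i-2)\pi$, and the rest is counting vertices, edges and corners.

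First I would expand the left-hand side using the definition of the angular defect. For a vertex $v$, write the corner angles of the faces incident to $v$ as $\omega^{(v)}_1,\dots,\omega^{(v)}_{\deg v}$, each $\omega^{(v)}_i$ being the angle between the two edge vectors of the corresponding face that are incident to $v$, exactly as in the definition of $\kappa$. Then
\[\sum_{v}\kappa(v)=\sum_{v}\Big(2\pi-\sum_{i}\omega^{(v)}_i\Big)=2\pi\,|V|-\sum_{v}\sum_{i}\omega^{(v)}_i.\]
Next I would re-index the double sum $\sum_{v}\sum_{i}\omega^{(v)}_i$ by Fubini: each corner angle of the net belongs to exactly one pair (vertex, incident face), so grouping the corner angles by the face they belong to rather than by their vertex gives
\[\sum_{v}\sum_{i}\omega^{(v)}_i=\sum_{\Delta_i\in F}\ \sum_{\text{corners of }\Delta_i}\omega .\]

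Now comes the step where the hypothesis is used. By assumption the extension lines of the $k_i$ edge vectors $e^i_{ij}$ of a face $\Delta_i$ support an oriented convex polygon; this polygon has $k_i$ sides and $k_i$ corners, its corner angles are precisely the $\omega$'s occurring in the inner sum above, and the interior-angle sum of a convex $k_i$-gon is $(k_i-2)\pi$. Hence $\sum_{\text{corners of }\Delta_i}\omega=(k_i-2)\pi$. Summing over faces and using the incidence count $\sum_i k_i=2|E|$ (for a closed net every edge lies in exactly two faces), I obtain
\[\sum_{v}\sum_{i}\omega^{(v)}_i=\sum_i (k_i-2)\pi=\pi\sum_i k_i-2\pi\,|F|=2\pi\,|E|-2\pi\,|F|,\]
and therefore
\[\sum_{v}\kappa(v)=2\pi\,|V|-2\pi\,|E|+2\pi\,|F|=2\pi\chi .\]

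The one point I would treat with care is the identification made in the third step: the $\omega$'s are defined abstractly, as angles between consecutive edge vectors of a face measured through their extension lines, and a priori they need not be interior angles of any polygon at all. It is exactly the ``oriented convex polygon'' hypothesis, together with the regularity of the vertices (which fixes the turning sense relative to $\qk$), that forces each such $\omega$ to lie in $(0,\pi)$ and to coincide with the corresponding interior angle of the convex polygon the hypothesis provides. Once this matching is pinned down the argument is the familiar bookkeeping behind the polyhedral Gauss--Bonnet theorem, and I expect no further obstacle.
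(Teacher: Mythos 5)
Your argument is correct and is essentially the paper's own proof: expand the defects, regroup the corner angles by face, use the convexity hypothesis to get the interior-angle sum $(s_i-2)\pi$ per face, and finish with $\sum_i s_i = 2\lvert E\rvert$. The extra care you take in identifying the abstract angles $\omega$ with the interior angles of the convex polygon is a reasonable refinement of the same bookkeeping, not a different route.
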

\begin{proof}
We have
\begin{align*}
\sum\limits_{vertices} K(v) & =\sum\limits_{vertices}\left(2\pi-\Sigma(v)\right)\\
&= 2\pi \lvert V\rvert -\Sigma\\
&=2\pi\lvert V \vert -\sum\limits_{faces} \Sigma(\Delta_i)
\end{align*}
where $\Sigma(v)$ is the sum of the interior angles at the vertex $v$, $\Sigma(\Delta_i)$ is the sum of the interior angles in $\Delta_i$ and $\Sigma$ is the sum of all the interior angles. Assuming that in the face $\Delta_i$ the extension lines of the vectors form an oriented convex  $s_i$-sided polygon, then the sum of the interior angles is $(s_i-2) \pi$ and
\[ \Sigma(\Delta_i)= (s_i-2)\pi.\]
Further note, that $\sum\limits_{faces} s_i = 2\lvert E\rvert$, hence 
\begin{align*}
    \sum\limits_{vertices} K(v) &= 2\pi\lvert V\rvert -\sum\limits_{faces} (s_i-2)\pi \\
    &= 2\pi \lvert V\rvert - 2\pi\lvert E\rvert +2\pi \lvert F\rvert \\
    &= 2 \pi \chi. 
\end{align*}
\end{proof}
\begin{theorem}
\label{thm:spinexisten}
Every intrinsic net $(X,\mathcal{A})$ (\cref{def:intrinsic_net}) has a preferred lifting.
\end{theorem}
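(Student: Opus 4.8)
The plan is to realise a preferred lifting as the solution of a $\mathbb{Z}_2$-linear system indexed by the edges, and then to show that this system is consistent by a parity count fed by the Gauss--Bonnet Lemma~\ref{lem:gaussbonet}. Throughout I assume $X$ is connected (otherwise argue componentwise) and underlies a closed oriented surface, so its Euler characteristic $\chi$ is even. First I would fix an arbitrary lifting $\{g_{ij}\}_{ij\in E}$ of the Levi-Civita connection $P^{LC}_{\so}$. For a regular vertex $v$ of degree $n$ the product $P_v:=g_{12}\cdot\cdots\cdot g_{n,1}$ lies in $\spin(2)$, realised inside the unit quaternions as $Q_{\qk}$, and is a lift of the $\so(2)$-holonomy of $P^{LC}_{\so}$ around $v$; unfolding the vertex star into the plane, exactly as in the argument that $h_{12}\cdot\cdots\cdot h_{n,1}=\mathrm{i}_{n_1}(\kappa(v))$, identifies that holonomy with the rotation through $\kappa(v)$. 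Since $\xi_0(\mathrm{i}_{\qk}(\theta))$ is the rotation through $\theta$ and $\xi_0$ is two-to-one, there is a unique $\epsilon_v\in\mathbb{Z}_2$ with $P_v=(-1)^{\epsilon_v}\,\mathrm{i}_{\qk}(\kappa(v))$, and the lifting is preferred exactly when every $\epsilon_v=0$.

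Next I would record how $\epsilon$ transforms under a change of lifting. Every lifting is obtained from the fixed one by $g_{ij}\mapsto(-1)^{s_e}g_{ij}$ for a function $s\colon E\to\mathbb{Z}_2$, and since $\spin(2)$ is abelian, flipping $s$ on a single edge $e$ negates $P_v$ for each of the two endpoints $v$ of $e$. Hence the new defect function is $v\mapsto\epsilon_v+\sum_{e\ni v}s_e$, and a preferred lifting exists iff the system $\sum_{e\ni v}s_e=\epsilon_v$ ($v\in V$) is solvable over $\mathbb{Z}_2$. The left-hand sides form exactly the image of the vertex--edge incidence map $\mathbb{Z}_2^{E}\to\mathbb{Z}_2^{V}$; for a connected net this image is the set of cochains with vanishing total sum (equivalently, the class of $\epsilon$ vanishes in $H^{2}(X;\mathbb{Z}_2)\cong\mathbb{Z}_2$), so it remains only to prove $\sum_{v}\epsilon_v=0$.

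For this I would form the product $\prod_{v\in V}P_v$ in the abelian group $\spin(2)$. Each edge $e$, shared by faces $\Delta_i$ and $\Delta_j$, occurs in the fundamental loops of its two endpoints, and because $X$ is oriented these two loops run through the dual edge in opposite directions, so the contributed factors are $g_{ij}$ and $g_{ji}=g_{ij}^{-1}$, which cancel; hence $\prod_v P_v=1$. On the other hand $\mathrm{i}_{\qk}\colon(\R,+)\to Q_{\qk}$ is a homomorphism, so Lemma~\ref{lem:gaussbonet} gives
\[\prod_{v\in V}\mathrm{i}_{\qk}(\kappa(v))=\mathrm{i}_{\qk}\!\Big(\sum_{v}\kappa(v)\Big)=\mathrm{i}_{\qk}(2\pi\chi)=\cos(\pi\chi)=(-1)^{\chi}.\]
Comparing the two evaluations, $1=\prod_v P_v=(-1)^{\sum_v\epsilon_v}(-1)^{\chi}$, so $\sum_v\epsilon_v\equiv\chi\pmod 2$, which is $0$ because $\chi$ is even. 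Thus a solution $s$ exists and the corresponding lifting is preferred.

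The step I expect to be the main obstacle is the careful set-up behind the two reductions: first, pinning down that $P_v$ is a lift of the developing holonomy $R_{\kappa(v)}$ \emph{with the sign convention used to define} $\kappa$ (this is where the unfolding argument and the regular-vertex condition are needed), and second, the cancellation $\prod_vP_v=1$, for which orientability of the net is essential. Once these are in place, the Gauss--Bonnet Lemma together with the parity of $\chi$ closes the argument at once.
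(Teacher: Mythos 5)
Your proof is correct and is essentially the paper's argument: both attach a $\mathbb{Z}_2$ defect to each vertex via $P_v=\pm\,\mathrm{i}_{\qk}(\kappa(v))$, observe that changing the lifting alters the defects by an edge-coboundary, and kill the total defect by the $g_{ij}g_{ji}=1$ cancellation over all vertex loops together with Lemma~\ref{lem:gaussbonet} and the evenness of $\chi$. The only difference is presentational: you solve the resulting $\mathbb{Z}_2$ incidence system directly, whereas the paper phrases the same computation as the vanishing of a class in $\mathrm{H}^2(X^*,\mathbb{Z}_2)$ evaluated on the fundamental cycle.
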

\begin{proof}
Any choice of the lifting $g_{ij}$ gives a $2$-cochain $\sigma$ in the following way. Let $\mu$ be a map from the vertices to $\mathrm{Spin}(2)$ defined by
\[\mu[v]= g_{12}\cdot \cdots \cdot g_{n,1}\]
and let $\nu$ be the map defined by
\[\nu[v]= \cos\frac{K(v)}{2} + \sin\frac{K(v)}{2} \qk.\]
Since $g_{i,i+1}$ all lie in the $\qi$-$\qj$-plane, $\mu$ and $\nu$ both indeed have the codomain $\mathrm{Spin}(2)$. Since $\mathrm{Spin}(2)$ is abelian, $\mu$ and $\nu$ can be linearly extended to the $2$-cochains of $X^*$, i.e., 
\[\mu,\nu \in \mathrm{C}^2(X^*,\mathrm{Spin(2)}).\]
The $2$-cochain $\sigma$ is defined by
\[\sigma[v]:=\mu[v]\cdot \nu[v]^{-1}.\]
Since $g_{12}\cdot \cdots \cdot g_{n,1}=\pm(\cos\frac{K(v)}{2} + \sin\frac{K(v)}{2} \qk)$, $\sigma$ is actually a $2$-cochain with coefficient $\mathbb{Z}_2$, i.e.,
\[\sigma\in \mathrm{C}^2(X^*,\mathbb{Z}_2).\]
Clearly $\sigma$ takes the value:
\[\sigma[v] =\begin{cases} 1 & g_{21}\cdot \cdots \cdot g_{n,1} = \cos\frac{K(v)}{2} + \sin\frac{K(v)}{2} \qk \\ -1 & g_{21}\cdot \cdots \cdot g_{n,1}=-\cos\frac{K(v)}{2} - \sin\frac{K(v)}{2}\qk\end{cases}.\]
If $g_{ji}$ is a preferred lifting then $\sigma=0$. If we change the lifting at some edges, then it leads to a $2$-cochain $\sigma^\prime$ which only differs from $\sigma$ by a differential of a $1$-cochain:
\[\sigma^\prime =\sigma +\dif \delta \]
where $\delta\in \mathrm{C}^1(X^*,\mathbb{Z}_2)$. It implies that even though $\sigma$ as a cochain depends on the lifting $g_{ji}$,
\[\bar{\sigma}\in \mathrm{H}^2(X^*,\mathbb{Z}_2)\]
as a cohomology class doesn't depend on the choice of the lifting but only depends on the $\mathrm{SO}$-connection. Moreover $\bar{\sigma}=0$ if and only if there exists a preferred lifting. Observe that
\[\sigma[X^*]=\mu[X^*]\cdot \nu[X^*]^{-1}\]
and we have $\mu[X^*]=\mathrm{Id}$ because every $g_{ji}$ and $g_{ij}$ always appear in pair in $X^*$. Furthermore $\sum\limits_{v\in V}K(v)=\chi\cdot 2\pi$ by \cref{lem:gaussbonet}, which is always an even number for a oriented surface. Hence $\nu[X^*]=\cos\frac{\chi}{2} + \sin\frac{\chi}{2}\qk=\mathrm{Id}$ and then 
\[\sigma [X^*]=1.\]
We know that there is only one nontrivial class $\omega\in \mathrm{H}^2(X^*,\mathbb{Z}_2)$ but $\omega[X^*]=-1$, thus $\omega\neq \bar{\sigma}$ and $\bar{\sigma}=0$.
\end{proof}

\begin{definition}
Given an intrinsic net $(X,\mathcal{A})$ satisfying the condition in \cref{lem:gaussbonet}, the spin equivalence class is the set of the  pairs $(X,\mathcal{A},\tilde{\eta})$ where $\tilde{\eta}$ is a preferred lifting of $(X,\mathcal{A})$ modulo the spin equivalence relation. 
\end{definition}
\begin{theorem}
\label{thm:spinnumber}
The spin equivalence class of an intrinsic net with Betti number $b$ has $2^{b}$ elements. 
\end{theorem}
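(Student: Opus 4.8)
The plan is to count the spin equivalence classes by reducing, exactly as in the smooth theory, to a computation of $\mathrm{H}^1$ with $\mathbb{Z}_2$-coefficients, reusing the $\mathbb{Z}_2$-cochain formalism from the proof of \cref{thm:spinexisten} together with the characterization of spin equivalence by the spin multi-ratio obtained in \cref{sec:extrinsic}. Fix once and for all the $\so(2)$-connection determined by $\mathcal{A}$ and an orthonormal frame on every face; then a lifting is nothing but a choice of sign for each transition element $g_{ij}\in\spin(2)$, and a lifting is preferred iff $g_{12}\cdot\dots\cdot g_{n,1}=\mathrm{i}_\qk(\kappa(v))$ at every vertex $v$.

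First I would show that the set of preferred liftings is a torsor over $\mathrm{Z}^1(X^*;\mathbb{Z}_2)$. Two liftings of the fixed connection differ by flipping the signs of the $g_{ij}$ on some set $S$ of edges, encoded by an indicator cochain $\delta_S\in\mathrm{C}^1(X^*;\mathbb{Z}_2)$; this multiplies the holonomy around a vertex $v$ by $(-1)^{(\dif\delta_S)[v]}$, so starting from a preferred lifting the flipped one is again preferred precisely when $\dif\delta_S=0$, i.e. $\delta_S$ is a cocycle. Since a preferred lifting exists by \cref{thm:spinexisten}, fixing one as base point puts the set of preferred liftings in bijection with $\mathrm{Z}^1(X^*;\mathbb{Z}_2)$.

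Next I would identify the spin equivalence relation at the cochain level. From the computation preceding the statement, the spin multi-ratio of a face edge-constraint realization along a loop factors as an intrinsic edge part times a curvature part $g_{12}\cdot\dots\cdot g_{k,1}$, which for a preferred lifting is pinned to $\mathrm{i}_\qk(\kappa(v))$ on fundamental loops; along a general even loop $\gamma$, replacing a lifting by its flip along a cocycle $\delta_S$ multiplies the spin multi-ratio by the sign $(-1)^{\langle\delta_S,[\gamma]\rangle}$, the pairing being $\mathrm{H}^1(X^*;\mathbb{Z}_2)\times\mathrm{H}_1(X;\mathbb{Z}_2)\to\mathbb{Z}_2$. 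By the spin equivalence theorem of \cref{sec:extrinsic} (two face edge-constraint nets whose spin multi-ratios agree in argument and norm on all even loops are spin equivalent), the realizations attached to two preferred liftings are spin equivalent exactly when $\langle\delta_S,[\gamma]\rangle=0$ for every even loop $\gamma$, i.e. when $\delta_S\in\mathrm{B}^1(X^*;\mathbb{Z}_2)$; here one uses that the $\mathbb{Z}_2$-pairing is perfect on the closed oriented surface and that every $\mathbb{Z}_2$-homology class admits an even-length representative loop (if necessary, concatenate a given loop with a contractible odd loop).

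Consequently the set of spin equivalence classes of preferred liftings is a torsor over $\mathrm{Z}^1/\mathrm{B}^1=\mathrm{H}^1(X^*;\mathbb{Z}_2)$, non-empty by \cref{thm:spinexisten}. Since $X^*$ and $X$ are CW decompositions of the same closed oriented surface, $\mathrm{H}^1(X^*;\mathbb{Z}_2)\cong\mathrm{H}^1(X;\mathbb{Z}_2)$, and as $\mathrm{H}_1(X;\mathbb{Z})$ is free of rank $b$ the universal coefficient theorem gives $\mathrm{H}^1(X;\mathbb{Z}_2)\cong(\mathbb{Z}_2)^{b}$; hence the spin equivalence class has exactly $2^{b}$ elements. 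I expect the real obstacle to be the middle step: establishing cleanly that, among preferred liftings, the only discrepancy the spin multi-ratio can detect is the sign $(-1)^{\langle\delta_S,[\gamma]\rangle}$, together with the attendant parity bookkeeping for loops; the passage from $\mathrm{Z}^1$ through $\mathrm{B}^1$ to $\mathrm{H}^1$ and its cardinality count are routine.
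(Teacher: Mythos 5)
Your proposal is, in substance, the paper's own argument recast in the $\mathbb{Z}_2$-cochain language already set up in the proof of \cref{thm:spinexisten}: the paper likewise observes that two preferred liftings differ on a set of edges meeting every vertex an even number of times (your statement that the difference is a cocycle, so that preferred liftings form a torsor over $\mathrm{Z}^1(X^*,\mathbb{Z}_2)$), argues that a homologically trivial difference leaves the even-loop spin multi-ratios unchanged and hence gives spin-equivalent liftings (the paper carries this out only for simply connected nets, via the ``loops cross the flipped curves evenly'' picture), and then asserts that flipping along homologically nontrivial curves produces new classes, so that the count is $\lvert \mathrm{H}^1(X,\mathbb{Z}_2)\rvert=2^b$. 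Up to this point your version is cleaner and more precise than the paper's sketch, which is loose both about the general (non--simply connected) triviality step and about the number of generating curves.

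The step you yourself flagged as the obstacle does, however, contain a genuine gap, and it sits exactly where the paper is silent. To conclude that $[\delta_S]\neq 0$ forces inequivalence you need an \emph{even} loop pairing nontrivially with $\delta_S$, and your repair --- concatenate with a contractible odd loop --- is not always available: if every vertex of $X$ has even degree, the all-ones cochain $\mathbbm{1}$ on the dual edges is a cocycle, so the length parity of a dual loop depends only on its $\mathbb{Z}_2$-homology class. For instance, on an $n\times m$ quad-grid torus with $n$ odd every loop in the horizontal class is odd, and then there is a nonzero class in $\mathrm{H}^1(X^*,\mathbb{Z}_2)$ (the annihilator of the even-representable classes) that pairs trivially with \emph{every} even loop; the even-loop multi-ratio criterion of \cref{sec:extrinsic} cannot distinguish the corresponding flipped lifting from the original, so your argument only bounds the number of classes above by $2^b$ in that situation rather than proving equality. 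To be fair, the paper's proof simply declares that a flip along a nontrivial curve ``gives a new spin equivalence class'' without justification, so your attempt makes visible a point the paper glosses over; closing it requires either extracting an invariant from odd loops as well (where the multi-ratio transforms by conjugation and positive scaling, so a sign is not obviously detectable), or taking spin equivalence of liftings to mean gauge equivalence, i.e.\ difference in $\mathrm{B}^1(X^*,\mathbb{Z}_2)$, in which case your torsor count is immediate.
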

\begin{proof}
Let $(X,\mathcal{A},\eta)$ and $(X,\mathcal{A},\eta^\prime)$ be two preferred liftings of the same underlying intrinsic net. Since the spin multi-ratio at each vertex $v$ should be the same for two liftings, at each vertex there should be even numbers of incident edges $e_{ij}$ such that the $\eta_{ij}$ and $\eta^\prime_{ij}$ have reversed signs. Hence all these edges form some closed boundaries. \\
For a simply-connected net these boundaries would create some separated disk-like areas. It's easy to see that any loops always cross these boundaries with an even number of times. Therefore the spin multi-ratio for all the even loops are the same for the liftings $\eta$ and $\eta^\prime$, meaning that they are spin equivalent.\\
Suppose the $X$ has the Betti number $b$, we can always find $2b$ closed curves which represent  different non-trivial homology classes. Pick any of such a closed curve, flip the signs of the spinor connections all along this curve and we obtain a new spin equivalence class.    
\end{proof}
\begin{remark}
Recall that in the smooth theories, an oriented manifold has the spin structure if and only if the second Stiefel-Whitney class is zero. Hence a oriented surface is spin if and only if the Euler characteristic is even (which is true for all oriented surfaces). Furthermore a spin manifold has $2^{2b}$ number of spin structures.  Clearly \cref{thm:spinexisten} and \cref{thm:spinnumber} show that our discretization preserves all these results.

\end{remark}
\section{The connection between the extrinsic and intrinsic Dirac operators in the discrete case}
In the last section we started with an intrinsic net and constructed face-edge-constraint realizations by solving the Dirac equation. Now we are going to discuss the question: How can we construct the intrinsic net from a given face-edge-constraint net? In fact we will see that each face-edge-constraint net is associated with an intrinsic  net and a constant spinor field $\phi_c$ with unit length in the ambient space $\R^3$ induces a spinor field on the intrinsic net. With this induced spinor field one can reconstruct the original egde-constraint net from the associated Riemannain net. Moreover the relation between the extrinsic and intrinsic operators still holds in the discrete case. Precisely, the ideas can be depicted as follows:
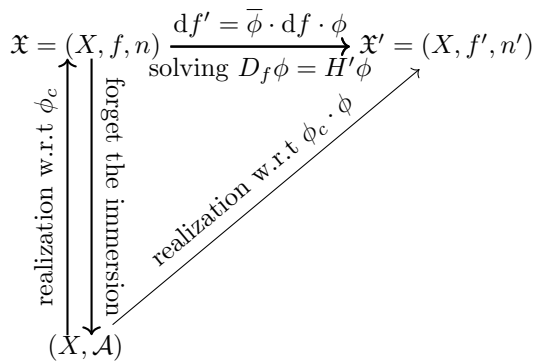
\begin{figure}[H]
    \centering
    \begin{tikzpicture}[scale=0.80]
    \node (ex1) at (0,0) {$\mathfrak{X}=(X,f,n)$};
    \node (ex2) at (6,0) {
    $\mathfrak{X}^\prime=(X,f^\prime,n^\prime)$};
    \node (in) at (0,-5) {$(X,\mathcal{A})$};
    \draw[->,thick] (ex1) -- (ex2) node[above,midway] {$\dif f^\prime =\overline{\phi}\cdot \dif f \cdot\phi$};
    \draw[->,thick] (ex1) -- (ex2) node[below,midway] {
    solving $D_f \phi=H^\prime \phi$};
    \path [->,every node/.style={sloped,anchor=south,auto=false}] (in) edge node {realization w.r.t  $\phi_c\cdot \phi$} (ex2);
    \path [->,thick,every node/.style={sloped,anchor=south,auto=false}] ($(in)-(0.3,-0.2)$) edge  node {realization w.r.t $\phi_c$} ($(ex1)-(0.3,0.2)$);
    \path [->,thick,every node/.style={sloped,anchor=south,auto=false}] ($(ex1)+(0.1,-0.2)$) edge  node {forget the immersion} ($(in)+(0.1,+0.2)$);
    
    \end{tikzpicture}
    
    \caption{The relation between intrinsic and extrinsic Dirac operators}\label{fig:exandin}
\end{figure}
For each face $\Delta_i$ the hyperplane perpendicular to $n_i$ gives a affine structure $\mathrm{Affine}(\Delta_i)$, we then can identify the edge $e_{ij}$ by
\begin{equation}
e_{ij}^i=\lvert E_{ij} \rvert \frac{df_{ij}-\langle df_{ij},n_i\rangle n_i}{\lvert df_{ij}-\langle df_{ij},n_i\rangle n_i\rvert }.
\label{eqn:affine_struc}
\end{equation}
Fix a reference frame $p^i$ for $\mathrm{Affine}(\Delta_i)$ and then $e_{ij}^i$ can be represented with $p^i$, denoted by $ \mathfrak{e}_{ij}^i$. \\
Recall that in the smooth case there  is  a section of the spinor bundle $\mathcal{S}\rightarrow \R^3$ given by $\phi_c=(c,1)$ where $c$ is the globally parallel section of the spin bundle. An immersion of the surface $X\hookrightarrow \R^3$ induces a section of $\mathcal{S}\rightarrow X$ by restricting $\phi_c$ on $X$. \\
Now choose a unit quaternion $g_i\in \mathrm{Spin}(3)$ such that 
\[e_{ij}^i=g_i^{-1}\cdot \mathfrak{e}_{ij}^i \cdot g_i\] 
The constant section of the spin bundle  can be formally defined by
\[c =p_i \cdot g_i\]
Then we can rewrite the spinor field $(c,1)$ as 
\begin{align*}
    (c, 1) & = (p_i\cdot g_i, 1) \\
    &= (p_i, g_i).
\end{align*}
The spinor connection is then given by 
\[g_{ij}=g_{i}\cdot h_{ij}\cdot g_{j}^{-1}\]
where $h_{ij}$ is defined  in \cref{lem:h} with $E_{ij}=e_{ij}^i  \cdot h_{ij}$.
The Dirac equation yields:
\begin{align*}
    2D(\phi_c) &= \sum\limits_j (p_i, \mathfrak{e}^i_{ij})\cdot\tilde{\eta}_{ji} (c, 1) %
    =\sum\limits_j (p_i,\mathfrak{e}_{ij}^i)\cdot \tilde{\eta}_{ji}(p_j\cdot g_j, 1) \\ &
    =\sum\limits_j (p_i,\mathfrak{e}_{ij}^i)\cdot \tilde{\eta}_{ji}(p_j,g_j)  %
    =\sum\limits_j (p_i,\mathfrak{e}_{ij}^i)\cdot (p_i, g_{ij}\cdot g_j)\\
    &= \sum\limits_j (p_i, \mathfrak{e}_{ij}^i\cdot g_{ij}\cdot g_j)%
    =\sum\limits_j (p_i,g_i\cdot e_{ij}^i\cdot g_i^{-1}\cdot g_{ij}\cdot g_j) \\
    &=\sum\limits_j (p_i,g_i\cdot e_{ij}^i\cdot h_{ij}) %
     = \sum\limits_j (p_i, g_i\cdot E_{ij}) \\
    &=(p_i, g_i \cdot \big(\sum\limits_j E_{ij}\big)) %
    =2\mathbf{H}_i \cdot (p_i,g_i) =2 \mathbf{H}_i \cdot (c, 1)\\
                &= 2\mathbf{H}_i\cdot \phi_c.
\end{align*}
It shows that the section $\phi_c$ satisfies the Dirac equation and the induced face-edge-constraint realization exactly recovers the original face-edge-constraint net. \\
Let $\mathcal{H}$ be functions from the faces to $\H$ and $\Gamma(\mathcal{S})$ be the spaces of the sections of the spinor bundle. The  map $\mathfrak{c}$ is constructed by:
\begin{align*}
\mathfrak{c}:\Gamma(\mathcal{S}) &\rightarrow \mathcal{H}\\
(c,\phi_i) &\mapsto \phi_i.
\end{align*}
The arguments above also imply that
\[\mathfrak{c}\circ (D-\mathbf{H})\circ \mathfrak{c}^{-1} = D_f.\]
Compared with \cref{thm:relation} this shows that the discretization of the both operators preserves the relation of their smooth correspondence. Note that with the affine structure \eqref{eqn:affine_struc} the intrinsic Dirac operator is different from the one in \cite{ye_2018} by a cosine factor, which was introduced for the purpose of numerics, because in that case the Dirac operator would be covariant under edge-length preserving deformations. In our case, the intrinsic Dirac operator is covariant under hyperedge-length preserving deformations and hence it is more consistent with the extrinsic one (\cref{fig:exandin}).

In summary, the key properties of our discrete extrinsic and intrinsic Dirac equations are that they both determine the local closing condition of an immersed surface in $\mathbb{R}^3$ and its mean curvature half-density. Besides, the notion of minimal surfaces in our framework generalize several well-known versions discrete minimal surfaces, coming from integrable systems and area variation formulations, respectively. The equivalence relation, induced by our discrete spin transformation, preserves many important properties of the spin structure from the smooth case.

\printbibliography
\end{document}